\documentclass[9pt,dvipsnames]{amsart}
\usepackage{amsmath, amsthm, amscd, amsfonts,graphicx,mathtools}
\usepackage{amsmath,amsthm,amsthm, amscd, amsfonts, stackrel, latexsym, dsfont,amssymb,mathrsfs,textcomp,wasysym,hyperref}
\usepackage{enumitem}
\usepackage{tikz}
\usepackage{tikz-cd}
\usetikzlibrary{matrix,arrows}
\usepackage[all]{xy}

\usepackage{color} 
\definecolor{slateblue}{rgb}{0,0.0,0.8}

\usepackage{xcolor}
\usepackage{graphicx}
\hypersetup{%
	colorlinks=true,
	linkcolor=slateblue,
	linkbordercolor=gray,
	citecolor=slateblue
}

\theoremstyle{definition}
\newtheorem{definition}{Definition}[section]
\newtheorem{example}[definition]{Example}
\theoremstyle{remark}
\newtheorem{remark}[definition]{Remark}

\theoremstyle{plain}
\newtheorem{theorem}[definition]{Theorem}
\newtheorem{lemma}[definition]{Lemma}
\newtheorem{proposition}[definition]{Proposition}
\newtheorem{corollary}[definition]{Corollary}
\newtheorem{notation}[definition]{Notation}

\makeatletter
\@namedef{subjclassname@2020}{\textup{2020} Mathematics Subject Classification}
\makeatother

\usepackage{amsthm}

\newtheorem{innercustomgeneric}{\customgenericname}
\providecommand{\customgenericname}{}
\newcommand{\newcustomtheorem}[2]{%
	\newenvironment{#1}[1]
	{%
		\renewcommand\customgenericname{#2}%
		\renewcommand\theinnercustomgeneric{##1}%
		\innercustomgeneric
	}
	{\endinnercustomgeneric}
}

\newcustomtheorem{customthm}{Theorem}
\newcustomtheorem{customlemma}{Lemma}

\newcommand{\supp}{\operatorname{supp}}

\title[Diffeologies on locally convex spaces and smooth multiplication of distributions]{Diffeologies on locally convex spaces and smooth multiplication of distributions}	
\author{Alireza Ahmadi}
\address{Alireza Ahmadi, Department of Mathematical Sciences, Yazd University, 89195--741, Yazd, Iran}
\email{ahmadi@stu.yazd.ac.ir}

\author{Bijan Davvaz}
\address{Bijan Davvaz, Department of Mathematical Sciences, Yazd University, 89195--741, Yazd, Iran}
\email{davvaz@yazd.ac.ir}

\author{Jean-Pierre Magnot}
\address{Jean-Pierre Magnot,  CNRS, LAREMA, SFR MATHSTIC, F-49000 Angers, France \\ and \\  Lyc\'ee Jeanne d'Arc,  Avenue de Grande Bretagne,  63000 Clermont-Ferrand, France}

\email{jean-pierr.magnot@ac-clermont.fr}

\subjclass[2020]{Primary 46A03, 57P99; Secondary 46F10, 46F05}
\keywords{Diffeology, locally convex spaces, convenient vector spaces, distribution multiplication, microlocal analysis.}

\begin{document}

	\maketitle
\begin{abstract}
	We investigate the canonical and $c^\infty$-diffeologies on Hausdorff locally convex spaces and their applications to Schwartz distributions. We prove that a Hausdorff locally convex space, endowed with its canonical diffeology, is convenient if and only if the canonical map to its internal tangent space at each point is a linear isomorphism. This yields a geometric characterization of Mackey completeness. We also compare several natural diffeologies on locally convex spaces and identify conditions under which they are preserved under completion, dualization, and the formation of inductive limits. As an application, we realize the space of microlocally multipliable distributions as a diffeological colimit and show that H\"ormander-admissible multiplication is smooth. This establishes a framework for nonlinear distribution theory beyond the classical manifold setting.
\end{abstract}
	
	\tableofcontents
	
\section{Introduction} \label{S1}
The theory of distributions provides a powerful framework for linear partial differential equations, whereas nonlinear operations on distributions are significantly more delicate. In particular, the multiplication of arbitrary distributions is ill-posed, as formalized by Schwartz's impossibility theorem \cite{Sch54}. This difficulty reflects a broader structural issue in infinite-dimensional analysis: many spaces arising naturally in microlocal analysis are not well-adapted to classical manifold models based on Banach or Fr\'echet spaces, while the nonlinear operations of interest require a flexible notion of smoothness. We rely on standard results from distribution theory and locally convex analysis; general references include Schwartz \cite{SchwartzTVS}, Horv\'ath \cite{Hor}, Tr\`eves \cite{Tre1967}, Narici--Beckenstein \cite{NB}, and Schaefer--Wolff \cite{SW}.

To address this, we employ the framework of diffeology, which allows for smooth structures on arbitrary sets and provides a natural setting for singular spaces, quotients, function spaces, and spaces of generalized functions. Moreover, the category of diffeological spaces is complete and cocomplete, making it ideally suited for constructions arising in functional analysis.

This theory was introduced by Souriau \cite{JMS} and further developed by Iglesias-Zemmour \cite{PIZ} and others. For developments in diffeological topology, tangent spaces, diffeological vector spaces and groups, geometric maps (such as submersions and immersions), and categorical properties, we refer to \cite{CSW2014,CW2016,CW2019,HM-V,ARA,BH}. Connections with distribution theory and generalized functions may be found in \cite{KR,GW2015}; see also \cite{BKW,GMW2024} for reviews of concepts related to our exposition. The required notions are recalled in Section~\ref{S2}.

The main contributions of this work are threefold. First, we provide a complete geometric characterization of convenient vector spaces. Second, we systematically compare natural diffeologies on locally convex spaces, including the canonical and $c^\infty$-diffeologies. Third, we develop a diffeological framework for nonlinear distribution theory, proving that H\"ormander-admissible multiplication of distributions is smooth.

We use the standard convenient calculus framework of Kriegl--Michor \cite{KM}, reviewed in Section \ref{S3}. For every LCTVS\footnote{Throughout the paper, all locally convex topological vector spaces are assumed to be Hausdorff.} $E$ endowed with the canonical diffeology, there is a canonical linear map $\Phi: E \to T_x E$ from $E$ to its internal tangent space at each point $x \in E$. Although this map is always injective (Proposition \ref{prop:linear-injection}), it is not necessarily surjective. We prove that $\Phi$ is bijective if and only if $E$ is a convenient vector space (equivalently, Mackey complete). The proof relies on the construction of certain pathological smooth curves in non-convenient spaces (see Lemma \ref{lem:pathological-curve-lemma}).

\begin{customthm}{A}[Theorem \ref{the:convenient-tangent}]
	Let $E$ be a Hausdorff locally convex topological vector space equipped with the canonical diffeology, and let $x \in E$. The space $E$ is convenient if and only if the canonical linear map $\Phi: E \to T_x E$ is an isomorphism of vector spaces.
\end{customthm}

This provides a geometric characterization of Mackey completeness and establishes a conceptual bridge between diffeology and convenient calculus. We remark that Theorem \ref{the:convenient-tangent} extends the corresponding result for the restrictive class of Hausdorff \emph{bornological} locally convex spaces proved in \cite[Theorem II]{Miy} within the elastic framework.

We also study the behavior of diffeologies under standard functional-analytic operations, including completion, dualization, and inductive limits. These constructions offer a systematic way to compare diffeological structures, with applications to function and distribution spaces. Related ideas appear in the diffeological treatment of Schwartz distributions in \cite{KR,GW2015}, as well as in the Cauchy diffeology developed in \cite{Ma2020}. The present approach may be viewed as a unification of these viewpoints within the setting of locally convex spaces and completion procedures.

Another application concerns microlocal analysis. While the product of arbitrary distributions is undefined, H\"ormander's microlocal criterion on wavefront sets \cite[Theorem 8.2.10]{Horm} provides a sufficient condition for its existence. The relevant microlocal notions, including the wavefront set and the pullback interpretation of multiplication, are recalled in Section \ref{S6} (see, e.g., \cite{Duistermaat,HormIII}). In practice, we utilize the functional-analytic properties of wavefront-restricted spaces established by Dabrowski--Brouder \cite{DB} and Brouder--Dang--H\'elein \cite{DHB}. We construct the diffeological space of microlocally multipliable distributions, denoted $M_{\mathrm{WF}}$, as a diffeological colimit of Cartesian products of wavefront-restricted distribution spaces over compatible cones.

The space $M_{\mathrm{WF}}$ is not naturally modeled on a single convenient vector space. Instead, it is assembled from spaces of the form $\mathcal{D}'_{\Gamma_1}(\Omega) \times \mathcal{D}'_{\Gamma_2}(\Omega)$, whose analytic character depends on the cones $\Gamma_1$ and $\Gamma_2$. For example, when $\Gamma = \varnothing$, the space $\mathcal{D}'_{\varnothing}(\Omega)$ identifies with the nuclear Fr\'echet space $C^\infty(\Omega)$ \cite[Lemma 10.2]{DHB}. When $\Gamma = \dot{T}^*\Omega$, one recovers the full distribution space $\mathcal{D}'(\Omega)$, the strong dual of a nuclear LF-space. 

Despite these complications, the resulting diffeological structure is sufficiently rich to ensure that microlocally admissible multiplication is smooth.

\begin{customthm}{B}[Theorem \ref{the:global-smoothness}]
	The global multiplication map $\mu: M_{\mathrm{WF}} \to \mathcal{D}'(\Omega)$ is diffeologically smooth.
\end{customthm}

Thus, diffeology provides a natural geometric language for both infinite-dimensional linear analysis and nonlinear operations on generalized functions that are microlocally well-defined.

\paragraph{\textbf{Organization of the paper.}}
Section \ref{S2} reviews the basic notions of diffeology, with an emphasis on internal tangent spaces. Section \ref{S3} introduces foundational diffeologies on locally convex spaces, including the $c^\infty$ and canonical diffeologies, and studies the locally Lipschitzian property. Section \ref{S4} presents the geometric characterization of convenient vector spaces. Section \ref{S5} investigates induced structures, such as completion, dual, and inductive-limit diffeologies, with applications to Sobolev and LF-spaces. Finally, Section \ref{S6} applies this machinery to spaces of test functions and distributions, constructs $M_{\mathrm{WF}}$, and proves the smoothness of the multiplication map, while streamlining several related results from the literature (see Remarks \ref{rem:convenient-inductive} and \ref{rem:functional-distributions}).

	\section{Background on diffeology} \label{S2}
	We recall the basic definitions needed in the sequel (see \cite{PIZ} for further details).
	We follow the terminology and order convention standard in diffeology: a diffeology with fewer plots is called finer, equivalently it induces a finer D-topology.
	
	\begin{definition}
		For a non-negative integer $n$, an $n$-\textit{domain} is any open subset of $\mathbb{R}^n$. A map $P: U \rightarrow X$ from an $n$-domain $U$ to a set $X$ is called an $n$-\textit{parametrization} in $X$. More generally, any map from a domain to $X$ is called a \textit{parametrization}.
		We denote by $\mathrm{Param}(X)$ the collection of all parametrizations in the set $X$.
		A parametrization $ P $ in $ X $ with $ 0\in\mathrm{dom}(P) $ and $ P(0)=x $ is called a parametrization \textit{centered} at $ x $.
		
		A family $\lbrace P_i: U_i\rightarrow X\rbrace_{i\in J}$ of $n$-parametrizations is \textit{compatible} if
		$P_i|_{U_i\cap U_j}=P_j|_{U_i\cap U_j}$, for all $i, j\in J$.
		For such a compatible family, the parametrization
		$P:\bigcup_{i\in J} U_i\rightarrow X$ given by $P(r)=P_i(r)$ for $r\in U_i$,
		is said to be the \textit{supremum} of the family.
		By convention, the supremum of the empty family is the empty parametrization $ \varnothing\rightarrow X $.
	\end{definition}
	
	\begin{definition}
		A \textit{diffeology} $ \mathfrak{D} $ on a set $ X $ is a collection of parametrizations in $ X $ satisfying  the following axioms:
		\begin{enumerate}
			\item[\textbf{D1.}]
			The union of the images of the elements of $ \mathfrak{D} $ covers $ X $.
			
			\item[\textbf{D2.}]
			For every element $P:U\rightarrow X$ of $\mathfrak{D}$ and every smooth map $F:V\rightarrow U$ between domains, the parametrization $P\circ F$ belongs to $\mathfrak{D}$.
			\item[\textbf{D3.}]
			The supremum of any compatible family of elements of $\mathfrak{D}$ also belongs to $\mathfrak{D}$.
		\end{enumerate}
		A \textit{diffeological space} $ (X,\mathfrak{D}) $ is an underlying set $X$ equipped with a
		diffeology $\mathfrak{D}$, whose elements are called the \textit{plots} in the space $ X $, or of the diffeology $\mathfrak{D}$.
		A diffeological space is often denoted by its underlying set when the diffeology is clear from the context.
	\end{definition}
	\begin{example}
		Let $ X $ be any set.
		The set of the locally constant parametrizations in  $ X $ is a diffeology on $ X $ called the \textit{discrete diffeology}.
		The set of all parametrizations in $ X $ is also a diffeology on $ X $ called the \textit{indiscrete diffeology}.
	\end{example}
	
	\begin{definition}\label{def:finer}
		Let $ X $ be any set, and let $\mathfrak{D}$ and $\mathfrak{D}'$ be
		diffeologies on $X$. If $\mathfrak{D}\subseteq\mathfrak{D}'$, then  we say that $\mathfrak{D}$ is \textit{finer} than $\mathfrak{D}'$, or equivalently, that $\mathfrak{D}'$ is \textit{coarser} than $\mathfrak{D}$.
	\end{definition}
	
	\begin{definition}[{\cite[Definition 2.6]{DA}}]
		Let $X$ be a set.
		A \textit{parametrized cover} of $ X $ is a set $ \mathfrak{C} $ of parametrizations in $ X $ satisfying D1.
		A \textit{prediffeology} on a set $ X $ is a set $ \mathfrak{P} $ of parametrizations in $ X $ satisfying D1 and D2.
	\end{definition}
	
	\begin{definition}
		Let $X$ be a set and let $\mathfrak{C}$ be a parametrized cover of $X$.
		The \textit{prediffeology generated} by $\mathfrak{C}$ denoted by $\lfloor\mathfrak{C}\rfloor$, consists of parametrizations $ P\circ F $, where $ P $ is an element of $\mathfrak{C}$ and $ F $ is a smooth map between domains.
		The \textit{diffeology generated} by $ \mathfrak{C} $, denoted by $\langle\mathfrak{C}\rangle$, is the set of  parametrizations $ P $
		that are the supremum of a compatible family
		$ \lbrace P_i\rbrace_{i\in J} $ of parametrizations in $ X $ with $ P_i\in\lfloor\mathfrak{C}\rfloor $.
		For a diffeological space $ (X,\mathfrak{D}) $,
		a \textit{covering generating family} is a parametrized cover $\mathfrak{C}$ of $ X $  generating the diffeology of the space, i.e., $\langle\mathfrak{C}\rangle=\mathfrak{D}$.
		Denote by $ \mathsf{CGF}(X) $ the collection of all covering generating families of the space $ X $.
	\end{definition}
	
	\begin{example}
		Any smooth manifold, especially a domain, carries a standard diffeology generated by its atlas.
	\end{example}
	
	\begin{definition}
		Let $X$ and $Y$ be two diffeological spaces. A map $f:X\rightarrow Y$ is \textit{smooth} if for every plot $P$ in $X$, the composition $f\circ P$ is a plot in the space $Y$.
		Denote by $ \mathsf{Diff} $ the category of diffeological spaces and smooth maps.
		The isomorphisms in the category $ \mathsf{Diff} $ are called \textit{diffeomorphisms}.
	\end{definition}
	One observes that the smooth parametrizations in a diffeological space are nothing but the plots in the space.
	
	\begin{notation}
		Let $X$ and $Y$ be two diffeological spaces.
		The set of all smooth maps from $X$ to $Y$  is denoted by $\mathrm{C}^{\infty}(X,Y)$.
		We also denote $\mathrm{C}^{\infty}(X,\mathbb{R})$ simply by $\mathrm{C}^{\infty}(X)$.
	\end{notation}
	
	\begin{definition}
		Let $X$ and $Y$ be diffeological spaces.
		A parametrization $ Q:V\rightarrow \mathrm{C}^{\infty}(X,Y)$ is a plot of the \textit{functional diffeology} on $ \mathrm{C}^{\infty}(X,Y) $ if for every plot $P:U\rightarrow X$ in $X$, the parametrization
		$ Q \circledcirc P:V\times U\rightarrow Y$
		given by
		$ (Q\circledcirc P)(s,r)= Q(s)\bigl(P(r)\bigr) $
		is a plot in $ Y $.
	\end{definition}
	
	\begin{definition}
		A map $ f:X\rightarrow Y $ between diffeological spaces is an \textit{induction} if it is injective and the pullback diffeology by $ f $, i.e.,
		$ \{ P\in\mathrm{Param}(X)\mid f\circ P \mbox{ is a plot in } Y \} $
		is the same as the diffeology of $ X $.
		In particular, inductions are smooth.
	\end{definition}
	
	\begin{definition}
		Let $ X $ be a diffeological space. A \textit{diffeological subspace} of $ X $ is a subset $ X'\subseteq X $ equipped with the \textit{subspace diffeology}, which consists of all plots in $ X $ whose images are contained in $ X' $.
		With this diffeology, the inclusion map $ X'\hookrightarrow X $ is an induction.
	\end{definition}
	
	\begin{definition}
		Every diffeological space $ X $ has a natural topology called the D-\textit{topology} in which a subset of $X$ is D-\textit{open} if its preimage by any plot is open.
	\end{definition}
	By \cite[\S 2.9]{PIZ}, every smooth map is D-continuous, that is, continuous with respect to the D-topology (see \cite{CSW2014} for a detailed discussion on the D-topology).
	
	The category $\mathsf{Diff}$ is both complete and cocomplete (see \cite{BH}), meaning it possesses all small limits and colimits. These are constructed on the underlying sets and then equipped with the corresponding universal diffeologies, namely the initial and final diffeologies (see \cite{PIZ,CSW2014} for the explicit initial and final constructions).
	
	\begin{definition}
		Let $X$ be a set.
		Let $\{f_i : X \rightarrow X_i\}_{i\in J}$ be a family of maps into diffeological spaces. The \textit{initial diffeology} on $X$ is the coarsest diffeology making all $f_i$ smooth. A parametrization $P:U\rightarrow X$ is a plot if and only if for every $i\in J$, the composite $f_i\circ P$ is a plot in $X_i$.
	\end{definition}
	
	\begin{definition}
		Let $X$ be a set.
		Let $\{g_i : X_i \rightarrow X\}_{i\in J}$ be a family of maps from diffeological spaces. The \textit{final diffeology} on $X$ is the finest diffeology making all $g_i$ smooth. A parametrization $P:U\rightarrow X$ is a plot if and only if for every $r\in U$, there exists an open neighborhood $V\subseteq U$ of $r$, such that either $P|_V$ is a constant parametrization, or $P|_V = g_i\circ Q$ for some index $i\in J$ and a plot $Q:V\rightarrow X_i$.
	\end{definition}
	
	Limits in $\mathsf{Diff}$ are endowed with the initial diffeology with respect to the projection maps. Colimits are endowed with the final diffeology with respect to the structural maps.
	One special case is the product of diffeological spaces, which is defined by the initial diffeology.
	
	\begin{definition}
		Let $ \lbrace X_i\rbrace_{i\in J} $ be a family of diffeological spaces.
		The \textit{product diffeology} on
		$ X=\prod_{i\in J} X_i$ is given by the parametrizations $ P $ in $ X $ for which
		$ \pi_i\circ P$ is a plot in $X_i$ for all $ i\in J $, where
		$ \pi_i:X\rightarrow X_i$ is the natural projection.
	\end{definition}
	
	\subsection{Internal tangent spaces}
	We review the construction of internal tangent spaces for diffeological spaces, following \cite{CW2016,HM-V}.
	
	Let $X$ be a diffeological space and $x\in X$. The \textit{category of germs of plots centered at $ x $}, denoted by $\mathcal{G}\mathsf{Plots}_x(X)$,
	has as objects all plots centered at $ x $, and
	a morphism $ Q\stackrel{\mathcal{G}_x(F)}{\longrightarrow} P $
	between two such plots $ P:U\rightarrow X $ and $ Q:V\rightarrow X $ centered at  $ x $ is
	the germ class of a smooth map $ F:W \rightarrow U $, defined on an open neighborhood $ W\subseteq V $ of $ 0 $, satisfying $ F(0)=0 $ and $ Q|_W=P\circ F $. Two such maps define the same germ if they coincide on some open neighborhood of $ 0 $ in $ V $.
	
	\begin{definition}
		The \textit{internal tangent space} $ T_xX $ to $ X $ at $ x $ is the colimit of the functor
		$ T_0:\mathcal{G}\mathsf{Plots}_x(X)\rightarrow \textsf{Vect} $
		from the category $ \mathcal{G}\mathsf{Plots}_x(X) $ to that of vector spaces and linear maps,
		given by
		\begin{center}
			$ Q\stackrel{\mathcal{G}_x(F)}{\longrightarrow} P\qquad\longmapsto\qquad
			dF_0:T_0V\longrightarrow T_0U $,
		\end{center}
		where $ dF_0 $ is the usual differential of $ F $ at $ 0 $.
		The colimit construction provides the following cocone:
		$$
		\xymatrix{
			& T_x X  & \\
			T_0 V  \ar[ur]^{dQ_0} \ar[rr]_{dF_0} & & T_0U  \ar[ul]_{dP_0}
		}
		$$
		where $dP_0:T_0U\rightarrow T_xX$ denotes the canonical linear map into the colimit corresponding to the plot $P$, and similarly for $dQ_0:T_0V\rightarrow T_xX$.
		This means the relationship
		$ dP_0\circ dF_0=dQ_0 $ holds, which can also be written as
		$ dP_0\circ dF_0=d(P\circ F)_0 $.
	\end{definition}
	Consequently, if two $ n $-plots $ P $ and $ Q $ centered at $ x $
	have the same germ at $ 0 $, then $ dP_0=dQ_0 $.
	
	\begin{definition}
		Let $ f:X\rightarrow Y  $ be a smooth map and $ x\in X$. The universal property of colimits induces a unique linear map
		$ df_x:T_xX\rightarrow T_{f(x)}Y  $, called the \textit{internal tangent map} of $ f $ at $ x $, with the property that
		$ df_x\circ  dP_0= d(f\circ P)_0 $,
		for all plots $ P $ in $ X $  centered at $ x$.
	\end{definition}
	This construction defines a functor from the category of pointed diffeological spaces to the category of vector spaces (see \cite[p. 11]{CW2016}). The functoriality implies the standard chain rule properties.
	
	\begin{proposition}
		\label{prop:chain-rule}
		Let $ f:X\rightarrow Y  $ and $ g:Y\rightarrow Z $ be smooth maps, and  $ x\in X $.
		\begin{enumerate}
			\item[ (1) ] 	
			$ d(\mathrm{id}_X)_x=\mathrm{id}_{T_xX} $.
			\item[ (2) ]
			$ d(g\circ f)_x=dg_{f(x)}\circ df_x $.
			\item[ (3) ]
			If $ f $ is a diffeomorphism, then $ df_x $ is an isomorphism, and $ (df_x)^{-1}=d(f^{-1})_{f(x)} $.
		\end{enumerate}
	\end{proposition}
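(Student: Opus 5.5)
The chain rule follows from the defining property of the internal tangent map: $df_x$ is the \emph{unique} linear map $T_xX\to T_{f(x)}Y$ satisfying $df_x\circ dP_0=d(f\circ P)_0$ for every plot $P$ centered at $x$. The key preliminary observation is that the maps $dP_0$, with $P$ ranging over plots centered at $x$, are the legs of the colimit cocone, so they jointly span $T_xX$. Consequently, two linear maps out of $T_xX$ that agree after precomposition with every $dP_0$ must coincide. This is exactly the uniqueness clause of the universal property of the colimit. I will use it as the only tool, checking each identity on the generators $dP_0$.

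For (1), let $P$ be a plot centered at $x$. Then $\mathrm{id}_{T_xX}\circ dP_0=dP_0=d(\mathrm{id}_X\circ P)_0$. So $\mathrm{id}_{T_xX}$ satisfies the characterizing property of $d(\mathrm{id}_X)_x$, and uniqueness gives equality.

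For (2), first note that $g\circ f$ is smooth, so $d(g\circ f)_x$ is defined. Given a plot $P$ centered at $x$, the composite $f\circ P$ is a plot centered at $f(x)$, because $f$ is smooth. Applying the defining property twice gives
\[
dg_{f(x)}\circ df_x\circ dP_0 = dg_{f(x)}\circ d(f\circ P)_0 = d(g\circ (f\circ P))_0 = d((g\circ f)\circ P)_0 .
\]
Hence $dg_{f(x)}\circ df_x$ has the characterizing property of $d(g\circ f)_x$, and uniqueness again yields equality.

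For (3), apply (2) to the two composites $f^{-1}\circ f=\mathrm{id}_X$ and $f\circ f^{-1}=\mathrm{id}_Y$, evaluated at the points $x$ and $f(x)$ respectively. Using (1), this gives
\[
d(f^{-1})_{f(x)}\circ df_x=\mathrm{id}_{T_xX}, \qquad df_x\circ d(f^{-1})_{f(x)}=\mathrm{id}_{T_{f(x)}Y}.
\]
So $df_x$ is a linear isomorphism with inverse $d(f^{-1})_{f(x)}$.

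There is no real obstacle here. The only point needing care is to state explicitly that composing a plot centered at $x$ with a smooth map produces a plot centered at the image point, which is what makes $d(f\circ P)_0$ meaningful. It is also worth noting that the uniqueness in the universal property is what forces the equalities; we do not need any explicit description of $T_xX$.
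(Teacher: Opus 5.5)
Your proof is correct and takes the same route as the paper. The paper simply invokes the functoriality of $T_x$ on pointed diffeological spaces, citing Christensen--Wu, and derives the three properties from it. Your argument via the uniqueness clause of the colimit's universal property, checked on the cocone legs $dP_0$, is exactly the verification of that functoriality, written out explicitly instead of cited.
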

	
	\begin{proposition}[{\cite[Proposition 2.42]{ARA}}]
		If $ f:X\rightarrow Y  $ is locally constant and $ x\in X $, then
		$ df_x=0 $.
	\end{proposition}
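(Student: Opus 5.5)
The plan is to reduce the claim to the chain rule of Proposition~\ref{prop:chain-rule} by factoring $f$ locally through a one-point space. Fix $x\in X$ and consider the subset $A=f^{-1}(f(x))$. Since $f$ is locally constant, $A$ is a neighbourhood of $x$ in the D-topology. Rather than work with open sets in $X$, however, I would argue directly at the level of plots, because $T_xX$ is defined as a colimit over germs of plots centred at $x$.

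\textbf{Step 1: reduce to the generators $dP_0(v)$.} By the universal property of the colimit, $T_xX$ is spanned by the images $dP_0(v)$, where $P:U\to X$ ranges over plots centred at $x$ and $v\in T_0U$. Hence it suffices to show that $df_x(dP_0(v))=0$ for every such $P$ and $v$. By definition of the internal tangent map,
\[
df_x\circ dP_0=d(f\circ P)_0 .
\]
So the claim becomes: $d(f\circ P)_0=0$ for every plot $P$ centred at $x$.

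\textbf{Step 2: show $f\circ P$ has a constant germ at $0$.} The composite $f\circ P:U\to Y$ is locally constant; interpreting \emph{locally constant} as ``constant on plots near each point'', this is immediate, and for the D-topological notion it follows from D-continuity of $P$. Thus there is an open neighbourhood $W$ of $0$ in $U$ on which $f\circ P$ equals the constant map $c$ with value $y=f(x)$. By the remark after the definition of $T_xX$, plots with the same germ have the same differential, so $d(f\circ P)_0=dc_0$.

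\textbf{Step 3: compute $dc_0=0$.} Factor $c$ as $c=\iota\circ\pi$, where $\pi:U\to\mathbb{R}^0$ is the map to the point and $\iota:\mathbb{R}^0\to Y$ is the $0$-plot with value $y$. Then $c$ is the plot $\iota$ precomposed with the smooth map $\pi$, and the cocone relation gives
\[
dc_0=d\iota_0\circ d\pi_0 .
\]
This vanishes because $T_0\mathbb{R}^0=0$.

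The only delicate point is Step 2: making precise which notion of local constancy is meant. If it is the D-topological one, one uses that each plot $P$ is D-continuous. The preimage under $P$ of the D-open set on which $f$ is constant is then open in $U$, which supplies the neighbourhood $W$. Everything else is formal from the colimit definition.
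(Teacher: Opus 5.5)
Your proof is correct and complete. The paper does not prove this statement itself; it only cites \cite[Proposition 2.42]{ARA}. Your derivation is the natural one from the colimit definition of $T_xX$:
\begin{enumerate}
\item the structure maps $dP_0$ jointly span $T_xX$, because a colimit in $\mathsf{Vect}$ is a quotient of the direct sum;
\item $df_x\circ dP_0=d(f\circ P)_0$ for every plot $P$ centred at $x$;
\item the germ of $f\circ P$ at $0$ equals that of the constant plot;
\item a constant plot factors through the $0$-domain $\mathbb{R}^0$, whose tangent space is zero, so its differential vanishes.
\end{enumerate}

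Two small remarks. First, the two readings of ``locally constant'' you are careful about actually coincide. If every $f\circ P$ is locally constant, then each fibre $f^{-1}(y)$ has open preimage under every plot, so it is D-open. Second, instead of the spanning argument you could note that the zero map satisfies $0\circ dP_0=d(f\circ P)_0$ for all $P$. By uniqueness of the map induced on the colimit, it must then equal $df_x$.

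Another common route restricts to a D-open neighbourhood of $x$ on which $f$ is constant, invokes locality of internal tangent spaces, and factors through a point. Your plot-level argument needs no such locality lemma.
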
	
	
	\subsection{Diffeological vector spaces and the fine diffeology}
	We now recall the fine diffeology, that is, the finest vector space diffeology on an arbitrary vector space in the order convention fixed above.
	
	\begin{definition}
		A \textit{diffeological vector space} is a vector space $ E $ equipped with a \textit{vector space diffeology}, which is a diffeology such that both vector addition and scalar multiplication are smooth maps.
	\end{definition}
	
	\begin{example}
		Let $ E_1 $ and $ E_2 $ be diffeological vector spaces.
		\begin{itemize}
			\item
			If $ E_1 $ is a linear subspace of a diffeological vector space $ E_2 $, then $ E_1 $ endowed with the subspace diffeology is a diffeological vector space.
			\item
			The set of smooth maps $\mathrm{C}^{\infty}(E_1,E_2)$, equipped with the functional diffeology, is a diffeological vector space.
		\end{itemize}
	\end{example}
	
	\begin{definition}
		Among all diffeologies on a vector space $ E $, the smallest one making $ E $ into a diffeological vector space is called the \textit{fine diffeology}.
		Equivalently, it is the finest vector space diffeology in the terminology of Definition \ref{def:finer}.
	\end{definition}
	
	\begin{proposition}
		[{\cite[\S 3.8]{PIZ}}]
		The fine diffeology on a vector space $ E $ is generated by the injective linear maps $\mathbb{R}^n \rightarrow E$ for all $n \geq 0$.
	\end{proposition}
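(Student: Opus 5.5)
The plan is to show that the diffeology $\mathfrak{D}_0 := \langle \mathfrak{C}\rangle$, where $\mathfrak{C}$ is the family of injective linear maps $A:\mathbb{R}^n\to E$ for $n\geq 0$, is a vector space diffeology, and that it is contained in every vector space diffeology on $E$. Together these say exactly that $\mathfrak{D}_0$ is the fine diffeology. First I note that $\mathfrak{C}$ is a parametrized cover: the case $n=0$ gives all the points of $E$, so D1 holds.

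\textbf{Step 1: a local description of $\mathfrak{D}_0$.} Unwinding the definition of $\langle\mathfrak{C}\rangle$, a parametrization $P:U\to E$ lies in $\mathfrak{D}_0$ if and only if each $r\in U$ has an open neighborhood $V$ with $P|_V=A\circ F$. Here $A:\mathbb{R}^n\to E$ is injective linear and $F:V\to\mathbb{R}^n$ is smooth. Equivalently, locally $P$ takes values in a finite-dimensional subspace $L\subseteq E$ and is smooth as a map into $L\cong\mathbb{R}^{\dim L}$. I also record that any linear map $B:\mathbb{R}^m\to E$, injective or not, satisfies $B\circ G\in\mathfrak{D}_0$ for smooth $G$. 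Indeed, $B$ factors as $B=A\circ\pi$, where $A:\mathbb{R}^k\cong B(\mathbb{R}^m)\hookrightarrow E$ is injective linear and $\pi$ is linear, so $B\circ G=A\circ(\pi\circ G)$.

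\textbf{Step 2: $\mathfrak{D}_0$ is a vector space diffeology.} For addition, a plot of $E\times E$ is a pair $(P,Q)$ of plots on a common domain $U$. Around each $r\in U$ I intersect the two neighborhoods so that $P=A\circ F$ and $Q=B\circ G$ on one open set $V$. Then
\[
P+Q=[A\ B]\circ(F,G)\quad\text{on } V,
\]
where $[A\ B]:\mathbb{R}^{n+m}\to E$ is linear, so $P+Q\in\mathfrak{D}_0$ by Step 1 and locality (axiom D3). For scalar multiplication, a plot of $\mathbb{R}\times E$ is a pair $(\lambda,P)$ with $\lambda$ a smooth real function. Locally $\lambda P=A\circ(\lambda F)$, which is again of the required form. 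The only real care needed in this step is the locality bookkeeping: choosing common neighborhoods and then invoking D3. I expect that to be the main (mild) obstacle.

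\textbf{Step 3: minimality.} Let $\mathfrak{D}$ be any vector space diffeology on $E$, and let $A:\mathbb{R}^n\to E$ be linear, $A(t)=\sum_i t_ie_i$. Constant maps are plots of $\mathfrak{D}$ (D1 with D2), and the coordinate functions $t\mapsto t_i$ are smooth. Hence $t\mapsto t_ie_i$ is the composite of the plot $(t_i,e_i)$ of $\mathbb{R}\times E$ with the smooth scalar multiplication, and so is a plot. Summing these with the smooth addition shows $A\in\mathfrak{D}$. Thus $\mathfrak{C}\subseteq\mathfrak{D}$, and since $\mathfrak{D}$ satisfies D2 and D3, $\mathfrak{D}_0=\langle\mathfrak{C}\rangle\subseteq\mathfrak{D}$. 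Combined with Step 2, $\mathfrak{D}_0$ is the smallest vector space diffeology, i.e.\ the fine diffeology.
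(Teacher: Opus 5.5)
Your proof is correct and matches the standard argument for this result; the paper does not prove it but cites Iglesias-Zemmour's book (\S 3.8), where the same steps appear: $\langle\mathfrak{C}\rangle$ is locally of the form $A\circ F$, it is closed under addition and scalar multiplication by concatenating linear maps, and it is minimal because every vector space diffeology already contains all linear maps $\mathbb{R}^n\to E$. One small slip to fix: the only linear map $\mathbb{R}^0\to E$ has image $\{0\}$, so D1 for $\mathfrak{C}$ must come from the injective maps $t\mapsto tx$ for $x\neq 0$, not from the case $n=0$.
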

	
	Another characterization of fine diffeology is as follows.
	
	\begin{proposition}
		Let $ E $ be a vector space. The fine diffeology is the coarsest diffeology on $E$ among those which makes every linear functional  $l: E \rightarrow \mathbb{R}$ smooth.
	\end{proposition}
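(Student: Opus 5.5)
We must show that the fine diffeology $\mathfrak{D}_f$ (generated by injective linear maps $\mathbb{R}^n\to E$) coincides with the coarsest diffeology $\mathfrak{D}_*$ making every linear functional smooth. Here $\mathfrak{D}_*$ is the initial diffeology for the family $\{l:E\to\mathbb{R}\}_{l\in E^*}$. So a parametrization $P:U\to E$ lies in $\mathfrak{D}_*$ exactly when $l\circ P$ is smooth for every linear $l$. The plan is to prove the two inclusions separately.

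\textbf{Inclusion $\mathfrak{D}_f\subseteq\mathfrak{D}_*$.} I will show that each generator is in $\mathfrak{D}_*$. For an injective linear $A:\mathbb{R}^n\to E$ and any $l\in E^*$, the composite $l\circ A$ is linear on $\mathbb{R}^n$, hence smooth. So every generator lies in $\mathfrak{D}_*$. Since $\mathfrak{D}_*$ is a diffeology, it is closed under D2 and D3. It therefore contains the diffeology generated by these maps, which by the proposition of \cite[\S 3.8]{PIZ} is $\mathfrak{D}_f$.

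\textbf{Inclusion $\mathfrak{D}_*\subseteq\mathfrak{D}_f$.} This is the main step. Let $P:U\to E$ satisfy that $l\circ P$ is smooth for all $l\in E^*$. Fix $r_0\in U$. I want a neighborhood $V$ of $r_0$ such that $P(V)$ lies in a finite-dimensional affine, hence linear, subspace $F\subseteq E$. Once I have this, $P|_V$ factors as $A\circ \tilde P$. Here $A:\mathbb{R}^k\cong F\hookrightarrow E$ is injective linear, and the coordinates of $\tilde P$ are $l_i\circ P$ for functionals $l_i$ extending the dual basis of $F$. These are smooth, so $P|_V$ is in $\lfloor\text{generators}\rfloor$, and D3 then gives $P\in\mathfrak{D}_f$.

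The local finite-dimensionality is the obstacle. I would argue by contradiction. Suppose that for every neighborhood $V$ of $r_0$ the span of $P(V)$ is infinite-dimensional. Then I can choose points $r_k\to r_0$ with $r_k$ in the ball $B(r_0,1/k)$ such that the vectors $P(r_k)$ are linearly independent. I choose them inductively, each outside the span of the previously chosen ones, which is possible because each span is finite-dimensional. I may also arrange that the $r_k$ are distinct and avoid $r_0$.

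Next I extend $\{P(r_k)\}$ to a Hamel basis and use choice to define $l\in E^*$ with prescribed values $l(P(r_k))=c_k$. Taking $c_k$ alternating, say $c_{2k}=0$ and $c_{2k+1}=1$, makes $l\circ P$ discontinuous at $r_0$. This contradicts smoothness, since $(l\circ P)(r_k)$ must converge to $(l\circ P)(r_0)$ along both subsequences.

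One technicality: if $P(r_0)$ lies in the span, I handle it by first subtracting $P(r_0)$, using translation-invariance of both diffeologies, or simply by choosing the $r_k$ so that $P(r_k)-P(r_0)$ are independent. This yields the factorization and completes the proof.
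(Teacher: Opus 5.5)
Your proof is correct, but it takes a different route from the paper's.

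\textbf{What the paper does.} The paper sets $\mathfrak{D}=\mathfrak{D}_*$ and notes that it is, by construction, the coarsest diffeology making all linear functionals smooth. It then invokes \cite[Proposition 3.4]{CW2019}, which says that any vector space diffeology making all linear functionals smooth is the fine one. The only thing the paper checks by hand is that $\mathfrak{D}_*$ is closed under addition and scalar multiplication, i.e.\ that it is a vector space diffeology.

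\textbf{What you do differently.} You work from the generating-family description of the fine diffeology \cite[\S 3.8]{PIZ} and prove both inclusions directly. Your substantive step is that a plot of $\mathfrak{D}_*$ locally takes values in a finite-dimensional subspace. This is exactly the content the paper delegates to Christensen--Wu. Your Hamel-basis functional with oscillating values along $r_k\to r_0$ establishes it cleanly.

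\textbf{What each route buys.} Your argument is self-contained and gives an explicit description of the plots: locally, they factor smoothly through finite-dimensional subspaces. It also shows more than needed, since only continuity of every $l\circ P$ at $r_0$ is used. It makes visible where the axiom of choice enters. You never need to verify the vector space axioms for $\mathfrak{D}_*$, because they follow a posteriori from $\mathfrak{D}_*=\mathfrak{D}_f$. The paper's proof is shorter but rests entirely on the external citation for the key fact.

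\textbf{Two small remarks.}
\begin{enumerate}
\item Your closing ``technicality'' about $P(r_0)$ is unnecessary. The alternating values $0,1,0,1,\dots$ keep $(l\circ P)(r_k)$ from converging to anything, whatever $l(P(r_0))$ is, and even if some $r_k=r_0$.
\item The degenerate case $F=\{0\}$ is covered by the injective linear generator $\mathbb{R}^0\to E$.
\end{enumerate}
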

	
	\begin{proof}
		Let $\mathfrak{D}$ be the set of parametrizations $P: U \rightarrow E$ such that $l \circ P: U \rightarrow \mathbb{R}$ is smooth for every linear functional $l: E \rightarrow \mathbb{R}$. By construction, $\mathfrak{D}$ is the coarsest diffeology on $E$ making all linear functionals smooth.
		By \cite[Proposition 3.4]{CW2019}, any vector space diffeology making all linear functionals smooth is necessarily the fine diffeology. Therefore, it remains only to verify that $\mathfrak{D}$ is a vector space diffeology.
		
		To see that the addition map $+: E \times E \rightarrow E$ is smooth, let $(P_1, P_2): U \rightarrow E \times E$ be a plot, where $P_i \in \mathfrak{D}$ for $i=1,2$. For any linear functional $l: E \rightarrow \mathbb{R}$, we have $l \circ (P_1 + P_2) = l \circ P_1 + l \circ P_2$.
		Since addition in $\mathbb{R}$ is smooth and $l \circ P_i \in \mathrm{C}^{\infty}(U)$ for $i=1,2$, it follows that $l \circ (P_1 + P_2)$ belongs to $\mathrm{C}^{\infty}(U, \mathbb{R})$.
		Similarly, let $(\alpha,P):U\rightarrow \mathbb R\times E$ be a plot, so that $\alpha\in \mathrm{C}^{\infty}(U,\mathbb R)$ and $P\in\mathfrak D$. For every linear functional $l:E\rightarrow\mathbb R$, we have
		$l\circ(\alpha P)=\alpha\cdot(l\circ P).$
		Since multiplication in $\mathbb R$ is smooth, $\alpha\cdot(l\circ P)$ is smooth. Thus $\alpha P$ is a plot of $\mathfrak D$, and scalar multiplication is smooth.
	\end{proof}
	
	Thus, the fine diffeology depends only on the underlying vector space structure.
	In particular, on every finite-dimensional vector space it coincides with the usual smooth structure. In infinite dimensions, however, it is typically much finer than the natural diffeologies induced by a locally convex topology. For
	this reason, our main focus in the sequel will be on other diffeologies on locally convex spaces, in particular the $c^\infty$-diffeology and the canonical diffeology.
	
	\section{Foundational diffeologies on locally convex spaces}\label{S3}
	In this section, we describe diffeologies associated with locally convex spaces.
	We investigate two natural diffeologies: the $c^\infty$-diffeology, which arises from smooth calculus on locally convex spaces, and the canonical diffeology, which is determined by the continuous dual space.
	
	\subsection{The $c^\infty$-diffeology}
	To define smoothness on locally convex spaces, we use the Bastiani--Keller calculus, which is based on directional derivatives together with their joint continuity (see, e.g., \cite[Appendix A]{HS}).
	
	\begin{definition}\label{smoothmap}
		Suppose that $ E $ and $ F $ are LCTVSs, and let $ U \subseteq E $ be an open subset.
		\begin{itemize}
			\item We say that a map $ f: U \rightarrow F $ is $ C^0 $ if it is continuous, and we set $\partial^0 f := f$.
			\item A map $ f: U \rightarrow F $ is $ C^1 $ if there exists a continuous map, denoted by $\partial f: U \times E \rightarrow F$, such that
			$$ \partial f(x,v) = \lim_{t \rightarrow 0} \frac{f(x+tv) - f(x)}{t}. $$
			\item For $n \geq 2$, a map $ f: U \rightarrow F $ is $ C^n $ if for all $k=1, \dots, n$, there exists a continuous map $\partial^k f: U \times E^k \rightarrow F$ satisfying:
			$$ \partial^k f(x; v_1, \dots, v_k) = \lim_{t \rightarrow 0} \frac{\partial^{k-1}f(x+tv_k; v_1, \dots, v_{k-1}) - \partial^{k-1}f(x; v_1, \dots, v_{k-1})}{t}, $$
			where $x \in U$ and $v_1, \dots, v_k \in E.$
		\end{itemize}
		The map $\partial^k f(x;\cdot):E^k\rightarrow F$ is often viewed as the $k$-th derivative of $f$ at $x$.
		We say $f$ is $C^\infty$ if it is $ C^n $ for all $n \in \mathbb{N}$.
		In particular, since $\partial^0 f := f$, every $C^n$ map (and hence every
		$C^\infty$ map) is continuous.
		
		For a $C^1$ curve $c:\mathbb{R}\rightarrow E$, we write
		$c'(t):=\partial c\left(t,\tfrac{d}{dt}\right),$
		where $\tfrac{d}{dt}$ denotes the standard basis vector of $T_0\mathbb{R}\cong\mathbb{R}$, following the notation of \cite{CW2016}. This agrees with the usual derivative
		$$
		c'(t)=\lim_{h\rightarrow 0}\frac{c(t+h)-c(t)}{h}.
		$$
	\end{definition}
	
	This notion of smoothness gives rise to natural diffeologies on locally convex spaces.
	
	\begin{definition}
		Let $E$ be an LCTVS.
		\begin{itemize}
			\item
			The collection of all $C^\infty$ parametrizations in $E$ (in the sense of Definition \ref{smoothmap}) defines a diffeology called the \textit{Bastiani--Keller diffeology}.
			\item
			The collection of all parametrizations $P:U\rightarrow E$ such that, for every $C^\infty$ curve $c:\mathbb{R}\rightarrow U$, the composite $P\circ c:\mathbb{R}\rightarrow E$ is a $C^\infty$ curve forms the \textit{$c^\infty$-diffeology}. The D-topology associated with the $c^\infty$-diffeology is called the \textit{$c^\infty$-topology}.
		\end{itemize}
	\end{definition}
	
	\begin{proposition}\label{prop:Bastiani-Keller-diffeology}
		On an LCTVS, the Bastiani--Keller diffeology and the $c^\infty$-diffeology coincide.
	\end{proposition}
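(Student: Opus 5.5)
One inclusion is immediate, so the work lies in the other. If $P:U\rightarrow E$ is Bastiani--Keller $C^\infty$ and $c:\mathbb{R}\rightarrow U$ is a $C^\infty$ curve, then $P\circ c$ is $C^\infty$ by the chain rule for Bastiani--Keller maps. This chain rule holds in arbitrary LCTVSs: compositions of $C^n$ maps are $C^n$, with the derivatives given by the usual Faà di Bruno formula (see \cite[Appendix A]{HS}). Hence every Bastiani--Keller plot is a plot of the $c^\infty$-diffeology.

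For the converse, let $P:U\rightarrow E$, with $U\subseteq\mathbb{R}^n$ open, be such that $P\circ c$ is $C^\infty$ for every smooth curve $c$ into $U$. The plan is to reduce to the finite-dimensional Boman theorem in its locally convex form, as in \cite[Theorem 3.4 / Corollary 3.14]{KM}. Assume first that $E$ is Mackey complete. Then \cite{KM} gives that $P$ is conveniently smooth: all iterated directional derivatives $d^kP(x)(v_1,\dots,v_k)$ exist, and $d^kP:U\rightarrow L^k(\mathbb{R}^n;E)$ is smooth along curves. Because the domain is finite dimensional, $L^k(\mathbb{R}^n;E)\cong E^{n^k}$ algebraically, and each $x\mapsto \partial^kP(x;e_{i_1},\dots,e_{i_k})$ is again smooth along curves, hence locally bounded (Mackey) Lipschitz along curves. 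On a finite-dimensional domain, $c^\infty$-topology and usual topology agree, and a map smooth along curves is continuous into $E$ for the locally convex topology. More precisely, it is locally Lipschitz with respect to a bounded absolutely convex set, hence continuous. Joint continuity of $\partial^kP$ on $U\times(\mathbb{R}^n)^k$ then follows from multilinearity in the finite-dimensional variables $v_i$, since the map is a finite sum $\sum v_{1}^{i_1}\cdots v_k^{i_k}\,\partial^kP(x;e_{i_1},\dots)$ of continuous terms. This gives Bastiani--Keller smoothness.

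The main obstacle is removing Mackey completeness, since the paper works with arbitrary Hausdorff LCTVSs. I would pass to the completion $\widehat E$, which is Mackey complete; note that a curve into $E$ is $C^\infty$ in $E$ iff it is $C^\infty$ in $\widehat E$ with all derivatives lying in $E$. Applying the previous paragraph in $\widehat E$ yields continuous partial derivatives $\partial^kP$ with values in $\widehat E$. It remains to see that they lie in $E$. Each $\partial^kP(x;e_{i_1},\dots,e_{i_k})$ is an iterated derivative along straight lines, for instance $\partial_{i}P(x)=(P\circ c)'(0)$ with $c(t)=x+te_i$. Inductively, $t\mapsto \partial^{k-1}P(x+te_i;\dots)$ is itself a derivative of $P$ composed with an affine smooth map $\mathbb{R}^{k}\rightarrow U$. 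The subtle point is that mixed partials require smoothness on $\mathbb{R}^k$-parametrizations rather than curves. I would handle this by polarization: mixed derivatives $\partial^kP(x;v_1,\dots,v_k)$ are linear combinations of pure derivatives $\frac{d^k}{dt^k}\big|_{0}P(x+tv)$ with $v=\sum\epsilon_iv_i$. Each pure derivative is a derivative of the $E$-valued smooth curve $t\mapsto P(x+tv)$ and so belongs to $E$. Continuity in $E$ is inherited from $\widehat E$ because $E$ carries the subspace topology. This completes the converse inclusion.
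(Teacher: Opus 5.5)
Your proof is correct, but it is organized differently from the paper's. The paper settles the statement in one line: it cites \cite[Corollary 3.14]{KM} for the equivalence of Bastiani--Keller and $c^\infty$-smoothness of a parametrization $P:U\rightarrow E$, and it does not discuss completeness of $E$. You use the Kriegl--Michor calculus only for a Mackey complete target, where it is certainly available. You then add a genuine reduction: pass to $\widehat E$, get Bastiani smoothness there, and show that each $\partial^kP(x;v_1,\dots,v_k)$ already lies in $E$. By symmetry and polarization it is a linear combination of pure derivatives $\frac{d^k}{dt^k}\big|_{0}P(x+tv)$, and these are derivatives of $E$-valued $C^\infty$ curves. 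Arguing inductively in $k$, the subspace topology then gives both existence of the defining limits in $E$ and continuity of $\partial^kP$ into $E$. This buys an argument that visibly works for every Hausdorff LCTVS, which is the generality in which the proposition is stated and used (e.g.\ in Section~\ref{S5}, where $E$ need not be complete). It also isolates why completeness plays no role: on a finite-dimensional domain all derivatives are recoverable from curves. The cost is length, plus a few standard facts you should state rather than leave implicit. These are the symmetry of $\partial^kP(x;\cdot)$ in $\widehat E$ and the identity $\frac{d^k}{dt^k}\big|_{0}P(x+tv)=\partial^kP(x;v,\dots,v)$; both follow by applying each $\lambda\in\widehat E'$ and using the scalar case. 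You also need to reparametrize $t\mapsto x+tv$ into a curve $\mathbb{R}\rightarrow U$. In the Mackey complete step, continuity of $x\mapsto\partial^kP(x;e_{i_1},\dots,e_{i_k})$ follows more cleanly from the paper's own result that scalarwise smooth plots are locally Lipschitzian, hence continuous, than from the looser ``Lipschitz along curves'' phrasing.
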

	
	\begin{proof}
		By \cite[Corollary 3.14]{KM}, a parametrization $P:U\rightarrow E$ is Bastiani--Keller smooth if and only if it is $c^\infty$-smooth. Consequently, the Bastiani--Keller diffeology and the $c^\infty$-diffeology on $E$ are the same.
	\end{proof}
	
	\begin{proposition}\label{prop:closed-induction}
		Let $E$ be an LCTVS and $A$ be a $c^\infty$-closed subspace of $E$.
		Then the inclusion $A\hookrightarrow E$ is an induction with respect to the $c^\infty$-diffeologies.
	\end{proposition}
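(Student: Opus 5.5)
The plan is to show that the subspace diffeology induced on $A$ by the $c^\infty$-diffeology of $E$ coincides with the $c^\infty$-diffeology of $A$. Here $A$ carries the subspace topology, which makes it a Hausdorff LCTVS. The inclusion $\iota:A\hookrightarrow E$ is injective, so it suffices to prove the following: a parametrization $P:U\rightarrow A$ is a $c^\infty$-plot of $A$ if and only if $\iota\circ P$ is a $c^\infty$-plot of $E$. By the definition of the $c^\infty$-diffeology, both conditions are tested by composing with smooth curves $c:\mathbb{R}\rightarrow U$. The statement therefore reduces to a claim about curves: a curve $\gamma:\mathbb{R}\rightarrow A$ is $C^\infty$ in $A$ if and only if $\iota\circ\gamma$ is $C^\infty$ in $E$.

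\emph{Easy direction.} The map $\iota$ is continuous and linear. Hence every difference quotient of $\gamma$ taken in $A$ converges in $E$ to the same limit, and all the iterated derivatives $\partial^k$ are transported by $\iota$. So $C^\infty$ curves in $A$ are $C^\infty$ in $E$, and $\iota$ is smooth.

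\emph{Hard direction.} Let $\gamma:\mathbb{R}\rightarrow E$ be a $C^\infty$ curve with values in $A$. Since $A$ has the subspace topology, a sequence or net in $A$ converges in $A$ exactly when it converges in $E$ to a limit lying in $A$. Thus $\gamma$ is continuous into $A$. To get differentiability in $A$, the only issue is whether $\gamma'(t)$, the $E$-limit of the difference quotients $(\gamma(t+h)-\gamma(t))/h\in A$, actually belongs to $A$. This is the main obstacle, because $A$ is only assumed $c^\infty$-closed, not topologically closed. To overcome it, I will use two facts from \cite{KM}:
\begin{enumerate}
\item For a smooth curve, the difference quotients converge to $\gamma'(t)$ in the Mackey sense; that is, $\frac{1}{h}(\gamma(t+h)-\gamma(t))-\gamma'(t)$ lies in $|h|\cdot B$ for some bounded set $B$, since the second difference quotients are bounded.
\item A $c^\infty$-closed set contains the limits of all its Mackey-convergent sequences.
\end{enumerate}
Taking $h=1/n$, these give $\gamma'(t)\in A$ for every $t$.

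The argument then iterates. The curve $\gamma'$ is again a $C^\infty$ curve of $E$ with values in $A$, so the same reasoning shows that $\gamma'$ is differentiable in $A$ with derivative $\gamma''$, and so on. By induction, $\gamma$ is $C^k$ in $A$ for all $k$, with the same derivatives as in $E$. Hence $\gamma$ is $C^\infty$ in $A$. Applying this to $\gamma=P\circ c$ for every smooth curve $c$ in $U$ shows that the pullback diffeology by $\iota$ equals the $c^\infty$-diffeology of $A$, so $\iota$ is an induction.
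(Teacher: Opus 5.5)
Your proof is correct and follows the same route as the paper, which simply cites \cite[Lemma 3.8]{KM}; you spell out the argument behind that citation: after reducing to curves, $c^\infty$-closedness of $A$ keeps each derivative $\gamma^{(k)}(t)$, a Mackey limit of difference quotients lying in $A$, inside $A$, so the derivatives computed in $E$ are derivatives in $A$. Your Fact 2 applies to the paper's notion of $c^\infty$-closed (closed for the D-topology of the $c^\infty$-diffeology), because smooth curves are $c^\infty$-plots, so such sets are also closed in the Kriegl--Michor $c^\infty$-topology.
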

	
	\begin{proof}
		This follows from \cite[Lemma 3.8]{KM}.
	\end{proof}
	
	\begin{proposition}[{\cite[Lemma 1.3]{KM}}]
		\label{prop:linear_smooth}
		Every continuous linear map between LCTVSs is smooth with respect to their $c^\infty$-diffeologies. In particular, any continuous linear functional is smooth.
	\end{proposition}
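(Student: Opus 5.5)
The plan is to reduce the statement to smoothness of curves, using the characterization of the $c^\infty$-diffeology. Let $T:E\to F$ be a continuous linear map between LCTVSs. By the definition of the $c^\infty$-diffeology, $T$ is smooth precisely when $T\circ P$ is a plot of $F$ for every plot $P:U\to E$. That in turn means: for every $C^\infty$ curve $c:\mathbb{R}\to U$, the composite $T\circ P\circ c$ is a $C^\infty$ curve in $F$. Since $P\circ c$ is already a $C^\infty$ curve in $E$, it suffices to prove one thing: if $\gamma:\mathbb{R}\to E$ is a $C^\infty$ curve in the sense of Definition \ref{smoothmap}, then $T\circ\gamma$ is a $C^\infty$ curve in $F$.

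I will prove this by induction on $n$, showing that $T\circ\gamma$ is $C^n$ with $(T\circ\gamma)^{(n)}=T\circ\gamma^{(n)}$.

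\emph{Base case.} For $n=0$, continuity of $T\circ\gamma$ is just a composite of continuous maps.

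\emph{Inductive step.} Linearity of $T$ gives
$$\frac{T\gamma^{(n)}(t+h)-T\gamma^{(n)}(t)}{h}=T\Bigl(\frac{\gamma^{(n)}(t+h)-\gamma^{(n)}(t)}{h}\Bigr).$$
Continuity of $T$ then carries the limit as $h\to 0$ through, so the limit exists and equals $T\gamma^{(n+1)}(t)$. Finally, $t\mapsto T\gamma^{(n+1)}(t)$ is continuous as a composite of continuous maps.

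For curves, the iterated derivatives $\partial^k$ of Definition \ref{smoothmap} are determined by the ordinary derivatives. Indeed, $\partial^k\gamma(t;v_1,\dots,v_k)=v_1\cdots v_k\,\gamma^{(k)}(t)$, which is jointly continuous once $\gamma^{(k)}$ is continuous. So the one-variable induction suffices.

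The only point needing care is this last bookkeeping: the definition of $C^n$ involves the multivariable maps $\partial^k$, not just $\gamma^{(k)}$. Multilinearity in the $v_i$ makes this routine. Alternatively, one can run the same induction directly on $\partial^k(T\circ f)=T\circ\partial^k f$ for an arbitrary Bastiani--Keller smooth $f$, and then appeal to Proposition \ref{prop:Bastiani-Keller-diffeology}; this bypasses curves entirely.

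The particular case stated in the proposition follows by taking $F=\mathbb{R}$. On $\mathbb{R}$, the $c^\infty$-diffeology is the standard one, since $c^\infty$-smooth maps between finite-dimensional spaces are the ordinary smooth maps (Boman's theorem, which is part of the cited \cite[Corollary 3.14]{KM}). Hence a continuous linear functional is smooth in the usual sense.
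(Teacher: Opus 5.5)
Your proof is correct and is essentially the argument behind \cite[Lemma 1.3]{KM}, which the paper simply cites without proof: reduce to smooth curves, pull $T$ through the difference quotients by linearity and continuity, and induct to get $(T\circ\gamma)^{(n)}=T\circ\gamma^{(n)}$. Your two additional remarks supply details the citation leaves implicit: that for curves the Bastiani--Keller $\partial^k$ reduce to ordinary derivatives, and that the $c^\infty$-diffeology on $\mathbb{R}$ is the standard one by Boman's theorem, which is what the ``in particular'' clause needs.
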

	
	\begin{proposition}
		Let $E$ be an LCTVS. The $c^\infty$-diffeology on $E$ is a vector space diffeology, and is therefore coarser than the fine diffeology.
	\end{proposition}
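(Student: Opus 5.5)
The plan is to verify the two axioms of a diffeological vector space directly: that addition $E\times E\to E$ and scalar multiplication $\mathbb{R}\times E\to E$ are smooth for the $c^\infty$-diffeology. Throughout we use the description of the $c^\infty$-diffeology via Proposition \ref{prop:Bastiani-Keller-diffeology}: a parametrization $P:U\to E$ is a plot if and only if $P$ is $C^\infty$ in the Bastiani--Keller sense, equivalently if $P\circ c$ is a $C^\infty$ curve in $E$ for every smooth curve $c$ into $U$.

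For addition, let $(P_1,P_2):U\to E\times E$ be a plot of the product diffeology, so each $P_i$ is a $c^\infty$-plot of $E$. For a smooth curve $c$ into $U$, the curves $P_1\circ c$ and $P_2\circ c$ are $C^\infty$ curves in $E$. The difference quotients of $(P_1+P_2)\circ c$ are sums of those of the summands, so their limits exist and are the sums of the limits. Inductively, all iterated derivatives exist and equal $(P_1\circ c)^{(k)}+(P_2\circ c)^{(k)}$. These are continuous, since addition $E\times E\to E$ is continuous in a topological vector space. Hence $(P_1+P_2)\circ c$ is $C^\infty$.

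For scalar multiplication, take a plot $(\alpha,P):U\to\mathbb{R}\times E$ with $\alpha\in C^\infty(U)$. For a smooth curve $c$, put $a=\alpha\circ c$ and $\gamma=P\circ c$, so $a$ is a smooth real function and $\gamma$ is a $C^\infty$ curve. We need the product rule in $E$. Write
\[
\frac{a(t+h)\gamma(t+h)-a(t)\gamma(t)}{h}=\frac{a(t+h)-a(t)}{h}\,\gamma(t+h)+a(t)\,\frac{\gamma(t+h)-\gamma(t)}{h}.
\]
As $h\to0$ this tends to $a'(t)\gamma(t)+a(t)\gamma'(t)$, using continuity of $\gamma$ and joint continuity of scalar multiplication $\mathbb{R}\times E\to E$. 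The derivative $(a\gamma)'=a'\gamma+a\gamma'$ is again a sum of products of the same type, so induction gives the Leibniz formula
\[
(a\gamma)^{(k)}=\sum_{j=0}^{k}\binom{k}{j}\,a^{(j)}\gamma^{(k-j)},
\]
with each term continuous. Hence $a\gamma$ is $C^\infty$. The only mildly delicate step is this limit argument, and it rests solely on the continuity of the vector space operations. Alternatively, one can invoke the chain rule of convenient calculus \cite{KM}: the bilinear maps $+$ and $\cdot$ are continuous, hence $C^\infty$, and compositions of $C^\infty$ maps are $C^\infty$.

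Finally, since the fine diffeology is the smallest vector space diffeology on $E$, it is contained in the $c^\infty$-diffeology. That is, the $c^\infty$-diffeology is coarser than the fine diffeology in the sense of Definition \ref{def:finer}.
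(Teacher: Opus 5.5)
Your proof is correct and follows essentially the same route as the paper: you check addition and scalar multiplication along smooth curves using difference quotients and the Leibniz formula, relying only on continuity of the vector space operations, and then conclude from the minimality of the fine diffeology. Composing explicitly with curves $c$ into $U$ and working with the product diffeology on $E\times E$ is a slightly more careful version of the paper's argument, which checks curves in $E\times E$ directly.
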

	\begin{proof}
		A smooth curve in $E\times E$ is of the form $t\mapsto(P(t),Q(t))$ with
		$P,Q:\mathbb{R}\to E$ smooth. Then $t\mapsto P(t)+Q(t)$ is smooth, since
		$(P+Q)^{(k)}=P^{(k)}+Q^{(k)}$ is continuous for every $k$. Hence addition
		$+:E\times E\to E$ is smooth for the $c^\infty$-diffeologies.
		
		It remains to check scalar multiplication. Let
		$$
		c:\mathbb{R}\rightarrow \mathbb{R}\times E,\qquad c(t)=(\alpha(t),P(t)),
		$$
		be a $C^\infty$ curve. Then $\alpha:\mathbb{R}\rightarrow\mathbb{R}$ and $P:\mathbb{R}\rightarrow E$ are $C^\infty$.
		We claim that $t\mapsto \alpha(t)P(t)$ is a $C^\infty$ curve in $E$. For $C^1$-regularity, the difference quotient gives
		$$
		\frac{\alpha(t+h)P(t+h)-\alpha(t)P(t)}{h}
		=
		\frac{\alpha(t+h)-\alpha(t)}{h}P(t+h)
		+
		\alpha(t)\frac{P(t+h)-P(t)}{h},
		$$
		and the right-hand side converges to
		$\alpha'(t)P(t)+\alpha(t)P'(t).$
		The continuity of this derivative follows from the continuity of scalar multiplication. Repeating the same argument inductively, or equivalently using the usual Leibniz formula, one obtains
		$$
		(\alpha P)^{(k)}(t)
		=
		\sum_{j=0}^k \binom{k}{j}\alpha^{(j)}(t)P^{(k-j)}(t),
		$$
		which is continuous for every $k$. Hence $t\mapsto \alpha(t)P(t)$ is $C^\infty$.
		Therefore scalar multiplication is smooth. Hence the $c^\infty$-diffeology is a vector space diffeology, and so coarser than the fine diffeology.
	\end{proof}
	
	\subsection{The canonical diffeology}
	An alternative, duality-based approach is to define smoothness in terms of the continuous dual.
	
	\begin{definition}[{\cite[p. 5]{KR}}]
		Let $E$ be a topological vector space.
		The \textit{canonical diffeology} on $E$
		consists of all parametrizations $P:U\rightarrow E$ with the property that
		$\ell\circ P:U\rightarrow\mathbb{R}$ is smooth, for all continuous linear functionals $\ell: E \rightarrow\mathbb{R}$.
		In this situation, such a plot $P$ is said to be \textit{scalarwise smooth}, or \textit{weakly smooth}.
	\end{definition}
	
	\begin{remark}
		The canonical diffeology depends only on the continuous dual space $E'$.
		By the Mackey--Arens theorem \cite[Theorem 8.7.4]{NB}, all locally convex topologies compatible with a given dual pair $(E, E')$, such as the weak topology $\sigma(E, E')$ and the Mackey topology $\tau(E, E')$, have the same continuous dual.
		Consequently, they induce an identical canonical diffeology.
	\end{remark}
	
	The canonical diffeology satisfies the following basic properties.
	
	\begin{proposition}
		Let $E$ be a topological vector space.
		\begin{enumerate}
			\item	The canonical diffeology on $E$ is a vector space diffeology.
			\item  Every scalarwise smooth plot in $E$ is continuous with respect to the weak topology. So the D-topology associated with the canonical diffeology is finer than the weak topology on $E$.
		\end{enumerate}
	\end{proposition}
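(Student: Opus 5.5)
For part (1), I would argue exactly as in the proof that the coarsest diffeology making all linear functionals smooth is a vector space diffeology, with continuous linear functionals in place of arbitrary ones. First, the canonical diffeology is indeed a diffeology. D1 holds because constant parametrizations are scalarwise smooth. D2 holds because $\ell\circ(P\circ F)=(\ell\circ P)\circ F$ is a composite of smooth maps between domains. D3 holds because smoothness of $\ell\circ P$ is a local property on the domain. For addition, let $(P_1,P_2)$ be a plot of $E\times E$ with the product diffeology, so each $P_i$ is scalarwise smooth. Then for every $\ell\in E'$ we have $\ell\circ(P_1+P_2)=\ell\circ P_1+\ell\circ P_2$, which is smooth. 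For scalar multiplication, let $(\alpha,P)$ be a plot of $\mathbb{R}\times E$. Then $\ell\circ(\alpha P)=\alpha\cdot(\ell\circ P)$ by linearity of $\ell$, and this is smooth. So both operations are smooth and the diffeology is a vector space diffeology.

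For part (2), let $P:U\rightarrow E$ be scalarwise smooth. The weak topology $\sigma(E,E')$ is the initial topology for the family $\{\ell\}_{\ell\in E'}$. Hence $P$ is weakly continuous if and only if each $\ell\circ P$ is continuous. This holds because each $\ell\circ P$ is smooth, hence continuous.

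Next, I would compare the topologies. Let $O$ be weakly open. For every plot $P$, the preimage $P^{-1}(O)$ is open, since $P$ is weakly continuous. Thus $O$ is D-open, so every weakly open set is D-open, and the D-topology is finer than the weak topology.

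There is no real obstacle. The only point needing care is to use the product diffeology, as recalled in Section~\ref{S2}, to identify plots of $E\times E$ and $\mathbb{R}\times E$ with pairs of plots.
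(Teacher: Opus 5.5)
Your proposal is correct and follows the paper's proof essentially step for step: in (1) you use the identities $\ell\circ(P_1+P_2)=\ell\circ P_1+\ell\circ P_2$ and $\ell\circ(\alpha P)=\alpha\,(\ell\circ P)$, and in (2) you use the fact that $\sigma(E,E')$ is the initial topology for the maps $\ell\in E'$. The only difference is that you also check the diffeology axioms D1--D3 for the canonical diffeology, which the paper takes for granted; this is harmless and makes the argument slightly more complete.
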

	
	\begin{proof}
		(1) Let $P,Q:U\rightarrow E$ be scalarwise smooth plots. For every $\ell\in E'$, one has
		$$
		\ell\circ(P+Q)=(\ell\circ P)+(\ell\circ Q),
		$$
		which is smooth. Thus $P+Q$ is a plot of the canonical diffeology. Similarly, if $\alpha:U\rightarrow\mathbb R$ is smooth and $P:U\rightarrow E$ is scalarwise smooth, then
		$$
		\ell\circ(\alpha P)=\alpha\,(\ell\circ P)
		$$
		is smooth for every $\ell\in E'$. Hence scalar multiplication is smooth, and the canonical diffeology is a vector space diffeology.
		
		(2) Recall that the weak topology $\sigma(E,E')$ is the initial topology with respect to the family of maps $\ell:E\rightarrow\mathbb R$, $\ell\in E'$. If $P:U\rightarrow E$ is a scalarwise smooth plot, then each $\ell\circ P$ is smooth, hence continuous. Therefore $P$ is continuous as a map from $U$ to $E$ endowed with the weak topology. This is exactly to say that every weakly open set has open preimage by every plot of the canonical diffeology, or equivalently that the D-topology is finer than the weak topology.
	\end{proof}
	
	\begin{proposition}\label{prop:continuous-linear-canonical}
		Any continuous linear map $f: E_1\rightarrow E_2$ between topological vector spaces is smooth with respect to their canonical diffeologies.
	\end{proposition}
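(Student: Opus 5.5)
The proof goes directly through the definition of the canonical diffeology, using the fact that continuity and linearity are preserved when a continuous functional is pulled back along $f$. Let $P:U\rightarrow E_1$ be a plot of the canonical diffeology on $E_1$. We must show that $f\circ P:U\rightarrow E_2$ is scalarwise smooth, i.e.\ that $\ell\circ(f\circ P)$ is smooth for every continuous linear functional $\ell\in E_2'$.

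Fix $\ell\in E_2'$ and consider the composite $\ell\circ f:E_1\rightarrow\mathbb{R}$. It is linear, being a composition of linear maps. It is continuous, being a composition of continuous maps between topological vector spaces. Hence $\ell\circ f=:f^{*}\ell$ belongs to $E_1'$; this is just the transpose map $f^{*}:E_2'\rightarrow E_1'$.

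By associativity, $\ell\circ(f\circ P)=(\ell\circ f)\circ P=(f^{*}\ell)\circ P$. Since $P$ is scalarwise smooth and $f^{*}\ell\in E_1'$, the right-hand side is a smooth real-valued function on $U$. As $\ell$ was arbitrary, $f\circ P$ is a plot of the canonical diffeology on $E_2$. Therefore $f$ maps plots to plots, i.e.\ it is smooth.

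There is no real obstacle here. The only point worth stating explicitly is that continuity of $f$ is what guarantees $\ell\circ f$ lies in the \emph{continuous} dual $E_1'$, not merely in the algebraic dual. Without that, the hypothesis on $P$ could not be applied. No local convexity or Hausdorff assumption is needed.
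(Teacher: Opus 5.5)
Your proof is correct and is essentially the paper's argument: for a plot $P$ of $E_1$ and $\ell\in E_2'$, write $\ell\circ(f\circ P)=(\ell\circ f)\circ P$ and use that $\ell\circ f\in E_1'$. Your observation that continuity of $f$ is what places $\ell\circ f$ in the continuous dual is exactly the point the paper's one-line proof relies on.
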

	
	\begin{proof}
		For any plot $P$ in $E_1$ and any $\ell \in E_2'$, the map $\ell \circ (f \circ P) = (\ell \circ f) \circ P$ is smooth, because $\ell \circ f \in E_1'$.
	\end{proof}
	
	\begin{proposition}
		Let $E_1$ and $E_2$ be topological vector spaces equipped with their respective canonical diffeologies. The product diffeology on $E_1 \times E_2$ coincides with the canonical diffeology defined on the product topological vector space $E_1 \times E_2$.
	\end{proposition}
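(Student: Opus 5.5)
The plan is to show that the two diffeologies on $E_1\times E_2$ have exactly the same plots, proving each inclusion directly from the definitions. The one fact we need is the description of the continuous dual of the product topological vector space: a linear functional $\ell$ on $E_1\times E_2$ is continuous if and only if
$$\ell(x,y)=\ell_1(x)+\ell_2(y), \qquad \ell_1\in E_1',\ \ell_2\in E_2'.$$
Here $\ell_1=\ell\circ\iota_1$ and $\ell_2=\ell\circ\iota_2$, where $\iota_1(x)=(x,0)$ and $\iota_2(y)=(0,y)$ are the canonical inclusions. These inclusions are continuous and linear, which gives the forward direction. Conversely, any functional of this form is continuous, being the sum of the continuous maps $\ell_i\circ\pi_i$. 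So $(E_1\times E_2)'\cong E_1'\oplus E_2'$.

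\emph{Product plots are canonical plots.} Let $P=(P_1,P_2):U\to E_1\times E_2$ be a plot of the product diffeology, so that each $P_i=\pi_i\circ P$ is scalarwise smooth in $E_i$. For any $\ell\in(E_1\times E_2)'$ we decompose $\ell$ as above and get
$$\ell\circ P=\ell_1\circ P_1+\ell_2\circ P_2,$$
which is a sum of smooth real functions and hence smooth. Thus $P$ is a plot of the canonical diffeology of the product space.

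\emph{Canonical plots are product plots.} Let $P$ be scalarwise smooth on $E_1\times E_2$. The projections $\pi_i$ are continuous and linear, so by Proposition~\ref{prop:continuous-linear-canonical} they are smooth for the canonical diffeologies. Hence $\pi_i\circ P$ is a plot of $E_i$ for $i=1,2$. Concretely, for $\ell_i\in E_i'$ the map $\ell_i\circ\pi_i\circ P$ is smooth because $\ell_i\circ\pi_i\in(E_1\times E_2)'$. By the definition of the product diffeology, $P$ is then a product plot.

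The only step needing care is the decomposition of $(E_1\times E_2)'$, and it is elementary. The argument extends verbatim to finite products. For infinite products one would use that every continuous functional on a product depends on only finitely many coordinates, but that case is not needed here.
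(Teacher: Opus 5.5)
Your proof is correct and follows essentially the same route as the paper: both arguments rest on the decomposition $(E_1\times E_2)'\cong E_1'\oplus E_2'$, which you verify directly while the paper cites Schaefer--Wolff. Both then check the two inclusions; your appeal to the smoothness of the projections $\pi_i$ amounts to the paper's specialization $\ell_2=0$ or $\ell_1=0$.
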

	
	\begin{proof}
		By \cite[p. 137]{SW}, every continuous linear functional $\ell\in (E_1\times E_2)'$ is of the form
		$$
		\ell(x_1,x_2)=\ell_1(x_1)+\ell_2(x_2),
		$$
		with $\ell_1\in E_1'$ and $\ell_2\in E_2'$. Let $P=(P_1,P_2):U\rightarrow E_1\times E_2$ be a parametrization. Then $P$ is scalarwise smooth if and only if
		$$
		r\longmapsto \ell_1(P_1(r))+\ell_2(P_2(r))
		$$
		is smooth for all $\ell_1\in E_1'$ and $\ell_2\in E_2'$. Taking $\ell_2=0$ and $\ell_1=0$ shows that this is equivalent to $P_1$ and $P_2$ being scalarwise smooth. Hence the canonical diffeology on $E_1\times E_2$ coincides with the product diffeology.
	\end{proof}
	
	\subsubsection{Subspaces and the canonical diffeology}
	Let $A$ be a linear subspace of a topological vector space $E$. Then $A$ carries two natural diffeologies: its canonical diffeology, defined in terms of the continuous dual $A'$, and the subspace diffeology induced from the canonical diffeology of $E$. We show that these two diffeologies coincide when $E$ is an LCTVS.
	
	\begin{proposition}\label{prop:subspace_canonical}
		Let $E$ be an LCTVS with the canonical diffeology, and let $A$ be any linear subspace of $E$ with the subspace topology.
		Then the subspace diffeology inherited from $E$ and the canonical diffeology on $A$ coincide.
	\end{proposition}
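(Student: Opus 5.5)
The plan is to compare the two diffeologies directly through their defining conditions. Both are described by scalarwise smoothness against a family of functionals: the subspace diffeology uses restrictions $\ell|_A$ with $\ell\in E'$, and the canonical diffeology on $A$ uses all of $A'$. Let $P:U\rightarrow A$ be a parametrization and write $\iota:A\hookrightarrow E$ for the inclusion. Then $P$ is a plot of the subspace diffeology if and only if $\ell\circ\iota\circ P$ is smooth for every $\ell\in E'$. It is a plot of the canonical diffeology on $A$ if and only if $m\circ P$ is smooth for every $m\in A'$. So the statement reduces to comparing the two families of functionals $\{\ell|_A : \ell\in E'\}$ and $A'$.

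One inclusion is immediate. Since $A$ has the subspace topology, $\iota$ is continuous and linear. By Proposition \ref{prop:continuous-linear-canonical}, $\iota$ is then smooth from $A$ with its canonical diffeology to $E$ with its canonical diffeology. Hence every canonical plot of $A$ is a plot of $E$ with image in $A$, that is, a plot of the subspace diffeology. Equivalently, $\ell|_A\in A'$ for every $\ell\in E'$.

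For the converse, take a plot $P$ of the subspace diffeology and any $m\in A'$. The key step is the Hahn--Banach extension theorem for locally convex spaces. Continuity of $m$ on $A$ with the subspace topology gives a continuous seminorm $p$ on $E$ with $|m(a)|\le p(a)$ for all $a\in A$. Hahn--Banach then gives $\ell\in E'$ with $\ell|_A=m$ and $|\ell|\le p$. It follows that $m\circ P=\ell\circ\iota\circ P$, which is smooth by assumption. So $P$ is scalarwise smooth on $A$.

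Combining the two inclusions, the two families of functionals coincide, $A'=\{\ell|_A:\ell\in E'\}$, and therefore so do the diffeologies. The only nontrivial ingredient is the extension step. This is exactly where local convexity, and Hausdorffness being irrelevant, enters: for general topological vector spaces $E'$ may be too small, and the statement can fail. No closedness of $A$ is needed.
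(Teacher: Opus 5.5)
Your proof is correct and follows essentially the same route as the paper: continuity and linearity of the inclusion give one containment of diffeologies, and the Hahn--Banach extension of each $m\in A'$ to some $\ell\in E'$ gives the converse, which shows that the inclusion is an induction. Your side remark that local convexity is essential is also accurate; for instance, a one-dimensional subspace of $L^p(\mathbb{R})$ with $0<p<1$ has nontrivial dual, yet it inherits the indiscrete diffeology.
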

	
	\begin{proof}
		It suffices to show that the inclusion $\imath:A\hookrightarrow E$ is an induction. Since $\imath$ is continuous and linear, it is smooth by Proposition \ref{prop:continuous-linear-canonical}. Let $P:U\rightarrow A$ be a parametrization such that $\imath\circ P$ is a plot in $E$. We must show that $P$ is a plot in $A$.
		
		Let $\ell\in A'$. By \cite[Corollary 7.3.3]{NB}, $\ell$ extends to a continuous linear functional $\widetilde\ell\in E'$ satisfying $\widetilde\ell\circ\imath=\ell$. Since $\imath\circ P$ is a plot in $E$, the composite
		$$
		\ell\circ P
		=
		(\widetilde\ell\circ\imath)\circ P
		=
		\widetilde\ell\circ(\imath\circ P)
		$$
		is smooth. As this holds for every $\ell\in A'$, the map $P$ is a scalarwise smooth plot in $A$.
		This means that every parametrization in $A$ whose image in $E$ is a scalarwise smooth plot is already a scalarwise smooth plot in $A$, and hence the inclusion is an induction.
	\end{proof}
	
	\subsubsection{Locally Lipschitzian property}
	Although the canonical diffeology is defined in terms of the continuous dual, it retains strong compatibility with the original topology of a locally convex space. The key point is the locally Lipschitzian property introduced in \cite[p. 9]{KM}.
	
	\begin{definition}
		Let $E$ be an LCTVS.
		A parametrization $P:U\rightarrow E$ is called \textit{locally Lipschitzian} if for any point $r_0 \in U$, there exists an open neighborhood $V$ of $r_0$ such that the set of difference quotients
		$$ \left\{ \frac{P(r) - P(s)}{\|r - s\|} \mid r, s \in V, r \neq s \right\} $$
		is a bounded subset of $E$, where $\| \cdot \|$ denotes the standard Euclidean norm in U.
	\end{definition}
	
	\begin{proposition}
		Let $E$ be an LCTVS. Every scalarwise smooth plot in $E$ is locally Lipschitzian.
	\end{proposition}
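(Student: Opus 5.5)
The strategy is to reduce boundedness in $E$ to boundedness in $\mathbb{R}$ for each continuous functional, and then invoke Mackey's theorem: in an LCTVS a subset is bounded if and only if it is weakly bounded, i.e.\ $\ell(B)$ is bounded in $\mathbb{R}$ for every $\ell\in E'$ (see \cite{NB} or \cite{KM}). This fits the canonical diffeology well, since by definition it only gives information through the functionals $\ell\in E'$. In particular, no completeness of $E$ is needed.

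Fix a scalarwise smooth plot $P:U\rightarrow E$ and a point $r_0\in U$. Choose a closed ball $\overline{B}(r_0,\varepsilon)\subseteq U$ and let $V$ be the open ball $B(r_0,\varepsilon)$. Because $V$ is convex, for $r,s\in V$ the whole segment $[s,r]$ lies in $\overline{B}(r_0,\varepsilon)$. For each $\ell\in E'$ the function $f_\ell:=\ell\circ P$ is smooth on $U$, so by the classical mean value inequality
$$
\left|\ell\!\left(\frac{P(r)-P(s)}{\|r-s\|}\right)\right|
=\frac{|f_\ell(r)-f_\ell(s)|}{\|r-s\|}
\le \sup_{z\in \overline{B}(r_0,\varepsilon)}\|\nabla f_\ell(z)\|<\infty .
$$
The supremum is finite because $\nabla f_\ell$ is continuous on the compact ball. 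Hence the set of difference quotients $D_V$ is mapped by every $\ell\in E'$ into a bounded subset of $\mathbb{R}$. In other words, $D_V$ is weakly bounded.

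By Mackey's theorem, $D_V$ is then bounded in the original topology of $E$. This is exactly the locally Lipschitzian condition at $r_0$, and since $r_0$ was arbitrary the proof is complete. The neighbourhood $V$ depends only on $r_0$, not on $\ell$, and this is essential: the bound may vary with $\ell$, but the set $D_V$ must be fixed beforehand. The choice of a convex ball with compact closure inside $U$ guarantees this.

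The only nontrivial ingredient is the passage from weak boundedness to boundedness, i.e.\ Mackey's theorem, which rests on Hahn--Banach or the polar argument with equicontinuous sets. I will simply cite it rather than reprove it. The remaining steps are finite-dimensional calculus.
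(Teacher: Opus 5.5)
Your proposal is correct and follows essentially the same route as the paper. Both arguments bound each $\ell\in E'$ on the difference quotients over a ball with compact closure in $U$ via the mean value inequality (the paper writes it as the fundamental theorem of calculus plus Cauchy--Schwarz), and then pass from weak boundedness to boundedness by Mackey's theorem (the paper cites \cite[\S 52.19]{KM}).
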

	
	\begin{proof}
		The proof adapts the argument for curves from \cite[\S 1.5]{KM}.
		Let $P: U \rightarrow E$ be a scalarwise smooth plot. To prove that $P$ is locally Lipschitzian, fix $r_0 \in U$ and choose $\rho>0$ such that the closed ball $\overline{B}(r_0,\rho)$ is contained in $U$.
		
		We show that the set of difference quotients
		$$
		\Gamma =
		\left\{
		\frac{P(r)-P(s)}{\|r-s\|}
		\middle|\
		r,s \in \overline{B}(r_0,\rho),  r\neq s
		\right\}
		$$
		is bounded in $E$. By \cite[\S 52.19]{KM}, boundedness of subsets of an LCTVS can be tested by continuous linear functionals. That is, it is enough to show that $\ell(\Gamma)$ is bounded in $\mathbb R$ for every $\ell\in E'$.
		Let $\ell \in E'$ be arbitrary. Since $P$ is scalarwise smooth, the function
		$$
		f_\ell := \ell \circ P : U \rightarrow \mathbb{R}
		$$
		is smooth.  In particular, its gradient $\nabla f_\ell$ is continuous, and therefore bounded on the compact set $\overline{B}(r_0,\rho)$. Set
		$$
		M_\ell := \sup_{x\in \overline{B}(r_0,\rho)} \|\nabla f_\ell(x)\| < \infty .
		$$
		For distinct $r,s \in \overline{B}(r_0,\rho)$, the line segment from $s$ to $r$ is contained in $\overline{B}(r_0,\rho)$. By the fundamental theorem of calculus
		$$
		f_\ell(r)-f_\ell(s)
		=
		\int_0^1 \nabla f_\ell(s+t(r-s))\cdot (r-s) dt .
		$$
		Taking absolute values and applying the Cauchy--Schwarz inequality inside the integral, we get
		$$
		|f_\ell(r)-f_\ell(s)|
		\leq
		\int_0^1
		\|\nabla f_\ell(s+t(r-s))\|\|r-s\|dt
		\leq
		M_\ell \|r-s\|.
		$$
		Consequently,
		$$
		\left|
		\ell\left(
		\frac{P(r)-P(s)}{\|r-s\|}
		\right)
		\right|
		=
		\frac{|f_\ell(r)-f_\ell(s)|}{\|r-s\|}
		\leq M_\ell .
		$$
		Thus $\ell(\Gamma)$ is bounded for every $\ell\in E'$. Hence $\Gamma$ is bounded in $E$.
		Therefore the set of difference quotients of $P$ is bounded in a neighborhood of $r_0$. Since $r_0$ was arbitrary, $P$ is locally Lipschitzian.
	\end{proof}
	
	As locally Lipschitzian parametrizations are continuous with respect to the original topology, the following is immediate.
	
	\begin{corollary}\label{cor:canonical-continuous}
		In any LCTVS, every scalarwise smooth plot is continuous with respect to the original topology of the space.
	\end{corollary}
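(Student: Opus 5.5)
The corollary follows from the preceding proposition once we check that a locally Lipschitzian parametrization is continuous for the original topology of $E$. Let $P:U\rightarrow E$ be scalarwise smooth and fix $r_0\in U$. By the preceding proposition there is an open neighborhood $V$ of $r_0$ on which the set
$$
\Gamma=\left\{\frac{P(r)-P(s)}{\|r-s\|}\ \middle|\ r,s\in V,\ r\neq s\right\}
$$
is bounded in $E$. We may shrink $V$ to a ball around $r_0$ without changing this.

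Next we use that boundedness in an LCTVS is tested by continuous seminorms. Let $p$ be any continuous seminorm on $E$. Since $\Gamma$ is bounded, $M_p:=\sup_{\gamma\in\Gamma}p(\gamma)<\infty$. For $r\in V$ with $r\neq r_0$ we have
$$
P(r)-P(r_0)=\|r-r_0\|\,\gamma\quad\text{for some }\gamma\in\Gamma,
$$
and therefore
$$
p\bigl(P(r)-P(r_0)\bigr)\leq M_p\|r-r_0\|.
$$

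Now take a basic neighborhood $P(r_0)+\{x: p(x)<\varepsilon\}$, where $p$ is a continuous seminorm and $\varepsilon>0$. The basic neighborhoods of $P(r_0)$ in a locally convex space have this form, or are finite intersections of such sets, in which case we take the maximum of finitely many seminorms. Choose $\delta>0$ with $\delta M_p<\varepsilon$ and with the ball $B(r_0,\delta)\subseteq V$. Then $P$ maps $B(r_0,\delta)$ into this neighborhood. Hence $P$ is continuous at $r_0$, and since $r_0$ was arbitrary, $P$ is continuous on $U$.

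There is no real obstacle here. The only point needing care is the definition of boundedness: a set $B\subseteq E$ is bounded precisely when every continuous seminorm is bounded on $B$, which is equivalent to $B$ being absorbed by every zero neighborhood. This is exactly what the estimate above uses, so the whole argument is the standard implication from Lipschitz to continuous, carried out seminorm by seminorm.
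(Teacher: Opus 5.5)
Your proof is correct and follows the paper's route: the paper derives the corollary directly from the preceding proposition (scalarwise smooth plots are locally Lipschitzian), noting only that locally Lipschitzian parametrizations are continuous. Your seminorm-by-seminorm estimate $p\bigl(P(r)-P(r_0)\bigr)\leq M_p\|r-r_0\|$ spells out that step, which the paper leaves implicit.
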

	
	\begin{remark}
		Although scalarwise smooth plots in an LCTVS are continuous, smoothness with respect to canonical diffeologies does not in general imply continuity for the underlying topologies. Indeed, let $E$ be an LCTVS whose topology is strictly finer than the weak topology $\sigma(E,E')$, and let $E_{\sigma}:=(E,\sigma(E,E')).$
		Then $E$ and $E_{\sigma}$ have the same continuous dual, and therefore
		induce the same canonical diffeology. Hence the identity map
		$\operatorname{id}:E_{\sigma}\rightarrow E$
		is smooth with respect to the canonical diffeologies. However, this map is not continuous unless the original topology on $E$ coincides with the weak topology.
	\end{remark}
	
	The assumption of local convexity in Corollary \ref{cor:canonical-continuous} is essential. The following example shows that the result collapses in its absence.
	
	\begin{example}
	Consider the space $E = L^p(\mathbb{R})$ where $0 < p < 1$. By \cite[Theorem 7.7.8]{NB}, this space has a trivial continuous dual, i.e., $E' = \{0\}$. Consequently, the canonical diffeology on E is the indiscrete diffeology, which consists of all parametrizations into $E$. Thus, one can find scalarwise smooth plots that are not continuous with respect to the $L^p$ topology.
	
	This example also confirms that restriction to locally convex spaces is essential for our framework. Without local convexity, the canonical diffeology carries no useful geometric information.
	\end{example}
	
	\section{A diffeological characterization of convenient vector spaces}\label{S4}
	This section shows that convenient vector spaces have a precise diffeological characterization in terms of internal tangent spaces.
	
	\subsection{Pathological curves in non-convenient vector spaces}
	Our starting point is a classical analytical description of convenient vector spaces.
	
	\begin{proposition}\label{prop:convenient}
		{\cite[Theorem 2.14(4)]{KM}}
		An LCTVS $E$ is convenient if and only if every scalarwise smooth curve in $E$ is $C^\infty$.
	\end{proposition}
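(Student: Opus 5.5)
We argue directly from the definition of convenience as Mackey completeness: every Mackey--Cauchy sequence converges. The two directions are handled separately. Throughout we use the fact, already used in the proof of the locally Lipschitzian property, that boundedness in an LCTVS can be tested by continuous linear functionals \cite[\S 52.19]{KM}.

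\emph{Convenient $\Rightarrow$ scalarwise smooth curves are $C^\infty$.} Let $c:\mathbb{R}\to E$ be scalarwise smooth and fix $t$. We first show that the difference quotients $\delta_h:=\frac{c(t+h)-c(t)}{h}$ form a Mackey--Cauchy net as $h\to 0$. For each $\ell\in E'$, Taylor's formula applied to the smooth function $f_\ell=\ell\circ c$ gives
$$\Bigl|\ell\Bigl(\frac{\delta_h-\delta_k}{h-k}\Bigr)\Bigr|\leq \tfrac12\sup_{|s-t|\leq 1}|f_\ell''(s)|$$
for $0<|h|,|k|\leq 1$ and $h\neq k$. Hence the set $B:=\bigl\{\frac{\delta_h-\delta_k}{h-k}\bigr\}$ is bounded in $E$. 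This means $\delta_h-\delta_k\in |h-k|\,B$, so $(\delta_h)$ is Mackey--Cauchy. By Mackey completeness it converges in $E$ to some $c'(t)$. Because $\ell(c'(t))=f_\ell'(t)$, the curve $c'$ is again scalarwise smooth. The Lipschitz estimate shows that $\delta_h\to c'(t)$ locally uniformly in $t$ in the Mackey sense. Moreover, $c'$ is continuous in the original topology by Corollary \ref{cor:canonical-continuous}. Iterating the argument with $c'$ in place of $c$ shows that all derivatives $c^{(k)}$ exist in $E$ and are continuous, so $c$ is $C^\infty$.

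\emph{Scalarwise smooth curves are $C^\infty$ $\Rightarrow$ convenient.} Let $(x_n)$ be Mackey--Cauchy. After passing to a subsequence, we may assume $\mu_n(x_{n+1}-x_n)\in B$ for a fixed bounded absolutely convex set $B$ and a sequence $\mu_n\to\infty$ as fast as we like; say $\mu_n\geq n^{n}$. Put $t_n=1/n$. Fix a smooth function $\varphi:\mathbb{R}\to[0,1]$ equal to $0$ near $(-\infty,0]$ and to $1$ near $[1,\infty)$. We then build a curve $c$ with $c(0)=0$ that satisfies $c(t)=t\,x_n$ for $t$ near $t_n$. On $[t_{n+1},t_n]$ we interpolate by
$$c(t)=t\bigl(x_{n+1}+\varphi_n(t)(x_n-x_{n+1})\bigr),$$
where $\varphi_n$ is $\varphi$ affinely rescaled to that interval. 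For each $\ell\in E'$ the function $\ell\circ c$ is smooth on $(0,\infty)$, and its $k$-th derivative on $[t_{n+1},t_n]$ is bounded by $C_k\, n^{2k}\,|\ell(x_n-x_{n+1})|$ plus terms coming from $t\,\ell(x_{n+1})$. Since $|\ell(x_n-x_{n+1})|\leq \sup_B|\ell|/\mu_n$ decays faster than any power of $n$, all derivatives of $\ell\circ c - t\,\ell(x)$ tend to $0$ as $t\to 0^+$, where $\ell(x):=\lim\ell(x_n)$ exists because $(\ell(x_n))$ is Cauchy in $\mathbb{R}$. We extend $c$ to $t<0$ by $c(t)=t\,x_1$, or by any smooth extension. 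Then $\ell\circ c$ is smooth on all of $\mathbb{R}$, so $c$ is scalarwise smooth. By hypothesis $c$ is $C^\infty$, so $c'(0)=\lim_{t\to0}c(t)/t$ exists in $E$. Evaluating along $t=t_n$ gives $x_n=c(t_n)/t_n\to c'(0)$ in $E$. Hence the subsequence, and therefore the whole Mackey--Cauchy sequence, converges, and $E$ is convenient.

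The main obstacle is the second direction. The delicate step is verifying that $\ell\circ c$ is smooth \emph{at $0$}: the derivative bounds on shrinking intervals blow up polynomially in $n$, and this growth must be beaten by the decay of $\ell(x_n-x_{n+1})$. That is exactly why we first pass to a fast-converging subsequence with $\mu_n$ growing superpolynomially. If this bookkeeping becomes unwieldy, the fallback is to cite the equivalence of Mackey completeness with the convergence of such ``fast falling'' sequences along smooth curves, as in \cite[Theorem 2.14]{KM}.
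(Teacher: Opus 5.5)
The paper gives no proof of its own here; it simply cites \cite[Theorem 2.14(4)]{KM}. Your self-contained argument is therefore a different route. Your first direction is correct: it is the standard argument that the difference quotients of a scalarwise smooth curve are Mackey--Cauchy, with continuity of each $c^{(k)}$ supplied by Corollary \ref{cor:canonical-continuous}.

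The gap is in the second direction, at the extension of $c$ to $t<0$. With $c(t)=t\,x_1$ for $t<0$, the function $\ell\circ c$ has left derivative $\ell(x_1)$ at $0$. Its right derivative is $a_\ell:=\lim_n\ell(x_n)$. So $\ell\circ c$ is not even differentiable at $0$ unless $\ell(x_1)=a_\ell$ for every $\ell$, which fails in general. Then $c$ is not scalarwise smooth, and the hypothesis cannot be applied to it. Saying ``or by any smooth extension'' does not fix this. You would need an $E$-valued curve on $(-\infty,0)$ whose scalar compositions agree with $t\,a_\ell$ to infinite order at $0^-$. The obvious candidate, $t\,x$, is exactly what is unavailable, because the limit $x$ need not lie in $E$.

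The repair is easy: extend oddly, setting $c(t):=-c(-t)$ for $t<0$.
\begin{itemize}
\item Then $c(t)=t\,x_n$ near $t=\pm1/n$.
\item Put $g:=\ell\circ c-t\,a_\ell$ on $(0,\infty)$, and let $\tilde g$ be the odd extension of $g$ with $\tilde g(0)=0$. Then $\ell\circ c(t)=t\,a_\ell+\tilde g(t)$ on all of $\mathbb{R}$.
\item Your estimates give $g^{(k)}(t)\to0$ as $t\to0^+$ for every $k$. Hence $\tilde g$ is smooth and flat at $0$, and $\ell\circ c\in C^\infty(\mathbb{R})$.
\item The conclusion $x_n=c(1/n)/(1/n)\to c'(0)$ then follows exactly as you wrote.
\end{itemize}

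Alternatively, you can avoid the gluing by using one formula on all of $\mathbb{R}$, as in the paper's Lemma \ref{lem:pathological-curve-lemma}. After normalizing $x_1=0$, take $c(t)=\sum_n t\rho(nt)y_n$ with $y_n=x_{n+1}-x_n$. There $c(t)/t\to z$ in $\widehat E$ from both sides. So if $c$ is $C^\infty$, then $z=c'(0)\in E$, and $x_n\to z$ in $E$. That lemma's proof does not use the present proposition, so by contraposition it already gives the implication you are proving. It also shows that geometric decay $p_B(y_n)\le 2^{-n}$ is enough; your superpolynomial $\mu_n$ is more than needed but harmless.
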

	
	This characterization admits the following diffeological reformulation.
	
	\begin{proposition}\label{prop:convenient_diffeology}
		An LCTVS $E$ is convenient if and only if the $c^\infty$-diffeology and the canonical diffeology on $E$ coincide. When these two diffeologies agree, we refer to their common value as the \textit{convenient diffeology}.
	\end{proposition}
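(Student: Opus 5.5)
We will reduce the claim to Proposition \ref{prop:convenient}, which characterizes convenience by curves. The reduction needs two facts: an inclusion of diffeologies that always holds, and the observation that both diffeologies are determined by their smooth curves.

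First, the $c^\infty$-diffeology is always contained in the canonical diffeology. Let $P:U\rightarrow E$ be a $c^\infty$-plot and $\ell\in E'$. For every $C^\infty$ curve $c$ into $U$, the curve $P\circ c$ is $C^\infty$ in $E$. Since $\ell$ is continuous and linear, Proposition \ref{prop:linear_smooth} shows that $\ell\circ P\circ c$ is a smooth real curve. Boman's theorem (\cite[Corollary 3.14]{KM} in the finite-dimensional case) then gives that $\ell\circ P:U\rightarrow\mathbb{R}$ is smooth. So $P$ is scalarwise smooth.

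For the direction ``convenient implies equality'', let $P:U\rightarrow E$ be scalarwise smooth and let $c:\mathbb{R}\rightarrow U$ be a $C^\infty$ curve. For each $\ell\in E'$, the map $\ell\circ(P\circ c)=(\ell\circ P)\circ c$ is smooth. Hence $P\circ c$ is a scalarwise smooth curve in $E$. By Proposition \ref{prop:convenient} it is therefore $C^\infty$, and so $P$ lies in the $c^\infty$-diffeology. Combined with the first inclusion, the two diffeologies coincide.

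For the converse, assume the two diffeologies agree and let $c:\mathbb{R}\rightarrow E$ be a scalarwise smooth curve. Then $c$ is a $1$-plot of the canonical diffeology, hence a $1$-plot of the $c^\infty$-diffeology. Composing with the identity curve of $\mathbb{R}$ shows that $c$ itself is $C^\infty$. Proposition \ref{prop:convenient} then yields that $E$ is convenient.

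The only step that is not formal is the first inclusion, namely passing from smoothness along curves of $\ell\circ P$ to smoothness of $\ell\circ P$ on an open set of $\mathbb{R}^n$. This is exactly Boman's theorem, and we invoke it through \cite{KM} rather than reprove it. Alternatively, Proposition \ref{prop:Bastiani-Keller-diffeology} shows that $c^\infty$-plots are Bastiani--Keller smooth, and composing a Bastiani--Keller smooth map with a continuous linear $\ell$ visibly gives a smooth real function; this route avoids Boman's theorem directly.
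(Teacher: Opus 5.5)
Your proof is correct and follows the paper's argument. It has the same three parts: the always-valid inclusion via continuity of $\ell$, the forward direction by applying Proposition~\ref{prop:convenient} to $P\circ c$, and the converse by restricting to curves. The differences are cosmetic: you name Boman's theorem explicitly for the first inclusion, where the paper leaves it implicit, and in the converse you compose with $\mathrm{id}_{\mathbb{R}}$ instead of citing \cite[Corollary 3.14]{KM}.
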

	
	\begin{proof}
		As continuous linear functionals are $c^\infty$-smooth by Proposition \ref{prop:linear_smooth}, every $c^\infty$-plot is scalarwise smooth. Thus, on any LCTVS, the $c^\infty$-diffeology is finer than the canonical diffeology.
		If $E$ is convenient and $P:U\rightarrow E$ is a scalarwise smooth plot, then for every smooth curve $c:\mathbb R\rightarrow U$, the composite $P\circ c$ is a scalarwise smooth curve in $E$. By Proposition \ref{prop:convenient}, $P\circ c$ is a $C^\infty$ curve. Hence $P$ is a $c^\infty$-plot by definition of the $c^\infty$-diffeology.
		Conversely, if the two diffeologies coincide, then every scalarwise smooth curve is a $c^\infty$-plot, hence a $C^\infty$ curve by \cite[Corollary 3.14]{KM}. Proposition \ref{prop:convenient} then implies that $E$ is convenient.
	\end{proof}
	
	Thus, in any non-convenient vector space, one can find a scalarwise smooth curve that is not $C^\infty$. However, the failure of convenience yields a stronger pathology. The following lemma exhibits a
	particular phenomenon that will be crucial in the proof of Theorem \ref{the:convenient-tangent}.
	
	\begin{lemma}\label{lem:pathological-curve-lemma}
		Let $E$ be an LCTVS. If $E$ is not convenient, then there exists a scalarwise smooth curve $c:\mathbb{R}\rightarrow E$ with $c(0)=0$ such that $c'(0)$ does not exist in $E$. More precisely, the difference quotient $\frac{c(t)}{t}$ converges in the completion $\widehat{E}$ to an element $z\in\widehat{E}\setminus E$ as $t\rightarrow0$.
	\end{lemma}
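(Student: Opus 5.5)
The plan is to extract from the failure of convenience a Mackey--Cauchy sequence with no limit in $E$, and to encode that sequence into a curve through $0$ whose difference quotient $c(t)/t$ runs along the sequence as $t\to0$. Since $E$ is not convenient, it is not Mackey complete (this is the equivalence underlying Proposition \ref{prop:convenient}, \cite[Theorem 2.14]{KM}). So there is a Mackey--Cauchy sequence $(x_n)$ in $E$ that does not converge in $E$. Every Mackey--Cauchy sequence is Cauchy, so $(x_n)$ converges in the completion $\widehat E$ to some $z$. Because $\widehat E$ is Hausdorff, $z\notin E$.

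\textbf{Step 1 (fast convergence).} By definition of Mackey--Cauchy, there are a bounded absolutely convex set $B\subseteq E$ and scalars $\lambda_{n,m}\to0$ with $x_n-x_m\in\lambda_{n,m}B$. Passing to a subsequence, I arrange
\[
x_n-x_m\in \varepsilon_n B \quad\text{for all } m\geq n, \qquad \varepsilon_n:=4^{-n^2}.
\]
Then $\varepsilon_n 2^{nk}\to0$ for every fixed $k$. This super-polynomial decay is what will pay for the derivatives of the dyadic cut-offs in Step 3.

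\textbf{Step 2 (the curve).} Fix a smooth dyadic partition of unity on $(0,\infty)$: $\psi\in C_c^\infty(\mathbb R)$ supported in $\{1/4<|s|<1\}$ with $\sum_{n\geq1}\psi(2^n t)=1$ for $0<|t|\leq 1/2$. Define
\[
c(t):=t\sum_{n\geq1}\psi(2^n t)\,x_n \quad (t\neq0), \qquad c(0):=0,
\]
and extend $c$ outside $[-1/2,1/2]$ by any smooth cut-off, which is harmless. For each $t\neq0$ at most two terms are nonzero, so $c(t)\in E$ is well defined. For $0<|t|\leq1/2$ we have $c(t)/t=\sum_n\psi(2^nt)x_n$, a convex combination of $x_n,x_{n+1}$ with $2^{-n}\sim|t|$. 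Hence $c(t)/t-z\in\varepsilon_{n(t)}\overline{B}^{\widehat E}$, which tends to $0$ in $\widehat E$ as $t\to0$. This gives the convergence $c(t)/t\to z\notin E$. By uniqueness of limits in $\widehat E$, the derivative $c'(0)$ cannot exist in $E$.

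\textbf{Step 3 (scalarwise smoothness, the main obstacle).} Off $t=0$, each $\ell\circ c$ is clearly smooth, since locally it is a finite sum. The difficulty is at $t=0$. Fix $\ell\in E'$ and put $a_n:=\ell(x_n)$, with limit $a:=\widehat\ell(z)$. Then $|a_n-a|\leq C_\ell\varepsilon_n$, where $C_\ell=\sup_B|\ell|$. Using $\sum_n\psi(2^nt)=1$ near $0$, I write
\[
\ell(c(t))=a\,t+t\sum_n\psi(2^nt)\,(a_n-a).
\]
The $k$-th derivative of the $n$-th term of the remaining sum is bounded by $C_{k}\,2^{nk}\,C_\ell\varepsilon_n$ on its support $|t|\sim2^{-n}$. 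This bound tends to $0$ faster than any power of $2^{-n}$. Hence the series, together with all its termwise derivatives, converges uniformly on a neighbourhood of $0$, and every derivative of the remainder $t\sum_n\psi(2^nt)(a_n-a)$ tends to $0$ as $t\to0$. By the standard lemma that a function which is smooth off $0$, continuous at $0$, and whose derivatives all have limits at $0$ is $C^\infty$, the function $\ell\circ c$ is smooth with $(\ell\circ c)'(0)=a$. This verification of the derivative estimates is the step I expect to require the most care. The subsequence choice in Step 1 is made precisely to make it routine. The result is a scalarwise smooth curve with $c(0)=0$, proving the lemma.
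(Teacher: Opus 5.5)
Your proof is correct and follows essentially the paper's route: a Mackey--Cauchy sequence with no limit in $E$, a rapidly converging subsequence, a curve of the form $t\cdot(\text{cut-off combination of the sequence})$ whose scalar derivatives are controlled by a Weierstrass-type bound, and convergence of $c(t)/t$ in $\widehat E$ to the missing limit. The differences are technical: the paper uses nested cut-offs $\rho(nt)$ on the increments $y_n=x_{n+1}-x_n$, so geometric decay $p_B(y_n)\le 2^{-n}$ suffices (derivatives grow only like $n^{k-1}$), and it passes through the Banach completion $\overline{E_B}$ instead of your $\overline{B}^{\widehat E}$. The one slip in your write-up is that a $\psi$ supported in $\{1/4<|s|<1\}$ cannot satisfy $\sum_{n\ge1}\psi(2^nt)=1$ near $|t|=1/2$, since every term vanishes at $t=\pm1/2$; state that identity on a smaller punctured interval such as $0<|t|\le 1/4$, which is all your argument uses.
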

	\begin{proof}
		Since $E$ is not convenient, it is not Mackey-complete. By
		\cite[Lemma 2.2]{KM}, there exist a bounded absolutely convex disk
		$B\subseteq E$ and a sequence $(x_n)$ in
		$E_B:=\operatorname{span}(B),$
		which is Cauchy for the Minkowski functional $p_B$ but does not converge
		in $E$.
		Choose a strictly increasing sequence of indices $(n_k)$ such that
		$$
		p_B(x_{n_{k+1}}-x_{n_k})\le 2^{-k}
		\qquad(k\ge 1).
		$$
		We first note that the subsequence $(x_{n_k})$ is still non-convergent
		in $E$. Indeed, suppose that $x_{n_k}\rightarrow x$ in $E$ for some $x\in E$.
		Let $V$ be a $0$-neighbourhood in $E$, and choose a $0$-neighbourhood
		$U$ such that
		$U+U\subseteq V.$
		Since the inclusion
		$(E_B,p_B)\hookrightarrow E$
		is continuous, there exists $\varepsilon>0$ such that
		$$
		\{v\in E_B:p_B(v)<\varepsilon\}\subseteq U.
		$$
		As $(x_n)$ is $p_B$-Cauchy, there is $N$ such that
		$$
		p_B(x_n-x_m)<\varepsilon
		\qquad(n,m\ge N).
		$$
		Choose $k$ sufficiently large that $n_k\ge N$ and
		$x_{n_k}-x\in U$. Then, for every $n\ge N$,
		$$
		x_n-x
		=
		(x_n-x_{n_k})+(x_{n_k}-x)
		\in U+U
		\subseteq V.
		$$
		Thus $x_n\rightarrow x$ in $E$, contradicting the choice of $(x_n)$. Hence
		$(x_{n_k})$ does not converge in $E$.
		Replacing $(x_{n_k})$ by $(x_{n_k}-x_{n_1})$ and reindexing, we may
		therefore assume that
		$$
		x_1=0,\qquad
		y_n:=x_{n+1}-x_n,\qquad
		p_B(y_n)\le 2^{-n}
		\quad(n\ge 1).
		$$
		
		Let $\overline{E_B}$ denote the Banach completion of the normed space
		$(E_B,p_B)$. Since
		$$
		\sum_{n=1}^{\infty}p_B(y_n)
		\le
		\sum_{n=1}^{\infty}2^{-n}
		<\infty,
		$$
		the series $\sum_n y_n$ converges absolutely in $\overline{E_B}$. Write
		$\bar z:=\sum_{n=1}^{\infty}y_n\in\overline{E_B}.$
		Let $\widehat E$ be the completion of $E$. The continuous inclusion
		$E_B\hookrightarrow E\hookrightarrow\widehat E$
		extends uniquely to a continuous linear map
		$J:\overline{E_B}\rightarrow\widehat E.$
		Set
		$z:=J(\bar z)\in\widehat E.$
		Since $x_1=0$, we have
		$x_n=\sum_{k=1}^{n-1}y_k.$
		Thus $x_n\rightarrow\bar z$ in $\overline{E_B}$ and consequently
		$x_n\rightarrow z$
		in $\widehat E.$
		We claim that $z\notin E$. Otherwise, since the canonical inclusion
		$E\hookrightarrow\widehat E$ is a topological embedding, the convergence
		$x_n\rightarrow z$ in $\widehat E$ would imply $x_n\rightarrow z$ in $E$, contrary to
		the choice of $(x_n)$. Hence
		$z\in\widehat E\setminus E.$
		
		Choose $\rho\in C_c^\infty(\mathbb R)$ such that
		$$
		\rho=1 \text{ on }[-1,1],\qquad
		\operatorname{supp}(\rho)\subseteq[-2,2],\qquad
		0\le\rho\le1,
		$$
		and define
		$$
		c(t):=\sum_{n=1}^{\infty}t\rho(nt)y_n.
		$$
		For each fixed $t\neq0$, only finitely many terms are nonzero. Hence
		$c(t)\in E$ for every $t\in\mathbb R$, and clearly
		$c(0)=0.$
		
		We show that $c$ is scalarwise smooth. Let $\ell\in E'$. Since the
		restriction of $\ell$ to $(E_B,p_B)$ is continuous, there exists
		$C_\ell>0$ such that
		$$
		|\ell(y_n)|
		\le
		C_\ell p_B(y_n)
		\le
		C_\ell 2^{-n}.
		$$
		Put
		$h(s):=s\rho(s).$
		Then
		$t\rho(nt)=\frac{1}{n}h(nt),$
		and, for every $k\ge1$,
		$$
		\frac{d^k}{dt^k}\bigl(t\rho(nt)\bigr)
		=
		n^{k-1}h^{(k)}(nt).
		$$
		As $h\in C_c^\infty(\mathbb R)$, each derivative $h^{(k)}$ is bounded.
		Therefore, for suitable constants $C_{\ell,k}>0$,
		$$
		\left|
		\frac{d^k}{dt^k}
		\bigl(\ell(y_n)t\rho(nt)\bigr)
		\right|
		\le
		C_{\ell,k}2^{-n}n^{k-1}
		\qquad(k\ge1),
		$$
		while
		$$
		|\ell(y_n)t\rho(nt)|
		\le
		C_{\ell,0}2^{-n}n^{-1}.
		$$
		Since
		$$
		\sum_{n=1}^{\infty}2^{-n}n^{k-1}<\infty
		\qquad(k\ge1),
		$$
		the Weierstrass test shows that the scalar series defining $\ell\circ c$
		and the corresponding series of derivatives of every order converge
		uniformly on $\mathbb R$. Hence
		$\ell\circ c\in C^\infty(\mathbb R)$
		for every $\ell\in E',$
		so $c$ is scalarwise smooth.
		
		It remains to show that $c'(0)$ does not exist in $E$. For $t\neq0$,
		$$
		\frac{c(t)}{t}
		=
		\sum_{n=1}^{\infty}\rho(nt)y_n.
		$$
		Viewing this element in
		$\overline{E_B}$, we have
		$$
		\frac{c(t)}{t}-\bar z
		=
		\sum_{n=1}^{\infty}(\rho(nt)-1)y_n.
		$$
		The series on the right converges absolutely in $\overline{E_B}$, since
		$$
		\sum_{n=1}^{\infty}|\rho(nt)-1|\,p_B(y_n)
		\le
		\sum_{n=1}^{\infty}p_B(y_n)
		<\infty.
		$$
		If $n\le\lfloor |t|^{-1}\rfloor$, then $|nt|\le1$ and therefore
		$\rho(nt)=1$. Since $0\le\rho\le1$, it follows that
		$$
		\left\|
		\frac{c(t)}{t}-\bar z
		\right\|_{\overline{E_B}}
		\le
		\sum_{n>\lfloor |t|^{-1}\rfloor}p_B(y_n)
		\le
		\sum_{n>\lfloor |t|^{-1}\rfloor}2^{-n}.
		$$
		The final expression tends to $0$ as $t\rightarrow0$. Thus
		$$
		\frac{c(t)}{t}\longrightarrow\bar z
		\qquad\text{in }\overline{E_B}.
		$$
		
		For every $t\neq0$, the difference quotient $c(t)/t$ belongs to $E_B$.
		Since $J$ agrees with the canonical inclusion on $E_B$, we have
		$$
		J\!\left(\frac{c(t)}{t}\right)
		=
		\frac{c(t)}{t}
		\qquad\text{in }\widehat E.
		$$
		Applying the continuous map $J$ to the preceding convergence gives
		$$
		\frac{c(t)}{t}
		\longrightarrow
		J(\bar z)
		=
		z
		\qquad\text{in }\widehat E.
		$$
		Because $z\notin E$, the difference quotient cannot converge in $E$.
		Indeed, if it converged in $E$ to some $v\in E$, then it would also
		converge to $v$ in $\widehat E$, forcing $v=z$ by uniqueness of limits
		in $\widehat E$, a contradiction. Therefore $c'(0)$ does not exist as
		an element of $E$.
	\end{proof}

	\subsection{Internal tangent spaces and convenient vector spaces}
	Let $E$ be a topological vector space equipped with a vector space diffeology, and let $x\in E$.
	For each vector $v \in E$, the straight line path $c_v: \mathbb{R} \rightarrow E, t \mapsto x + tv$ is a 1-plot in $E$, since addition and scalar multiplication are smooth for any vector space diffeology. We define the map
	$$ \Phi: E \rightarrow T_x E \qquad \text{by} \qquad \Phi(v) = d(c_v)_0\big(\tfrac{d}{dt}\big), $$
	where $\tfrac{d}{dt}$ is the standard basis vector of $T_0\mathbb{R} \cong \mathbb{R}$.
	
	The following proposition shows how an LCTVS embeds into its tangent spaces.
	
	\begin{proposition}\label{prop:linear-injection}
		Let $E$ be a Hausdorff LCTVS endowed with any vector space diffeology finer than the canonical diffeology (in particular, the $c^\infty$-diffeology), and let $x\in E$.
		Then $\Phi: E \rightarrow T_x E$ is a linear injection.
	\end{proposition}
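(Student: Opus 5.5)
Linearity and injectivity are handled separately. For injectivity, the plan is to build, for each $\ell\in E'$, a linear functional on $T_xE$ that recovers $\ell(v)$ from $\Phi(v)$. Hahn--Banach (Hausdorff local convexity) then separates points.

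\emph{Linearity.} Take $v,w\in E$ and $a,b\in\mathbb R$. Consider the $2$-plot $P:\mathbb R^2\to E$, $P(s,t)=x+sv+tw$. It is a plot because the diffeology is a vector space diffeology, and it is centered at $x$. Composing with the smooth maps $F_1(t)=(t,0)$, $F_2(t)=(0,t)$ and $F_3(t)=(at,bt)$ gives $P\circ F_1=c_v$, $P\circ F_2=c_w$ and $P\circ F_3=c_{av+bw}$. The cocone relation $d(P\circ F)_0=dP_0\circ dF_0$ then yields
\[
\Phi(av+bw)=dP_0\bigl(a e_1+b e_2\bigr)=a\,dP_0(e_1)+b\,dP_0(e_2)=a\Phi(v)+b\Phi(w),
\]
using linearity of $dP_0$ and $dF_{3,0}(\tfrac{d}{dt})=ae_1+be_2$.

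\emph{Injectivity.} Fix $\ell\in E'$ and regard $\ell$ as a map $E\to\mathbb R$. The diffeology on $E$ is finer than the canonical one, so every plot $P$ of $E$ is a plot of the canonical diffeology. Hence $\ell\circ P$ is smooth, i.e.\ $\ell$ is smooth into $\mathbb R$ with its standard diffeology. Its internal tangent map gives a linear map $d\ell_x:T_xE\to T_{\ell(x)}\mathbb R\cong\mathbb R$. Here I use the standard fact that for a domain, or $\mathbb R$, the internal tangent space is the usual one, with $dQ_0$ the classical differential. This holds because the identity chart is a terminal object in the germ category, up to the usual argument. Then
\[
d\ell_x(\Phi(v))=d(\ell\circ c_v)_0\bigl(\tfrac{d}{dt}\bigr)=\tfrac{d}{dt}\big|_{t=0}\bigl(\ell(x)+t\ell(v)\bigr)=\ell(v).
\]
So if $\Phi(v)=0$, then $\ell(v)=0$ for all $\ell\in E'$. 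Since $E$ is Hausdorff and locally convex, Hahn--Banach gives $v=0$.

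The main obstacle is justifying that $T_r\mathbb R\cong\mathbb R$ with $dQ_0$ the classical derivative. This amounts to checking that the classical differentials $Q\mapsto Q'(0)$ form a cocone, which follows from the ordinary chain rule. That cocone induces a linear map $T_{\ell(x)}\mathbb R\to\mathbb R$ sending $d(\ell\circ c_v)_0(\tfrac{d}{dt})$ to $(\ell\circ c_v)'(0)=\ell(v)$. Only this induced map is needed, not a full isomorphism, so the cocone's universal property suffices. Composing it with $d\ell_x$ yields the desired separating functional.
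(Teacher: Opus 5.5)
Your proof is correct and follows the paper's argument: the same $2$-plot $x+sv+tw$ composed with linear paths gives linearity, and the internal tangent map $d\ell_x$ of a continuous linear functional together with Hahn--Banach gives injectivity. Your path $F_3(t)=(at,bt)$ handles additivity and homogeneity in one step, whereas the paper treats homogeneity separately via $t\mapsto\lambda t$. Your cocone argument also makes explicit the identification $T_{\ell(x)}\mathbb R\to\mathbb R$, which the paper uses tacitly when it writes $(\ell\circ c_v)'(0)=d\ell_x(\Phi(v))$.
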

	
	\begin{proof}
		To verify linearity, let $v, w \in E$, and define $G: \mathbb{R}^2 \rightarrow E$ by $G(s, t) = x + sv + tw$.
		Since $E$ carries a vector space diffeology, vector addition and scalar multiplication are smooth, so $G$ is a 2-plot.
		Consider the auxiliary paths in $\mathbb{R}^2$,
		$$\alpha(t)=(t,t), \qquad \beta(t)=(t,0), \qquad \gamma(t)=(0,t).$$
		Then $c_{v+w} = G \circ \alpha$, $c_v = G \circ \beta$, and $c_w = G \circ \gamma$.
		Since $d\alpha_0=d\beta_0+d\gamma_0$, linearity of the linear map $dG_{(0,0)}:T_0\mathbb R^2\rightarrow T_xE$ yields
		\begin{align*}
			\Phi(v+w) &= d(G \circ \alpha)_0\big(\tfrac{d}{dt}\big) \\
			&= dG_{(0,0)}\left(d\beta_0\big(\tfrac{d}{dt}\big) + d\gamma_0\big(\tfrac{d}{dt}\big)\right) \\
			&= d(G \circ \beta)_0\big(\tfrac{d}{dt}\big) + d(G \circ \gamma)_0\big(\tfrac{d}{dt}\big) \\
			&= \Phi(v) + \Phi(w).
		\end{align*}
		
		Homogeneity follows similarly by considering the reparametrization $r_\lambda: \mathbb{R} \rightarrow \mathbb{R}, t\mapsto \lambda t$. Since $d(r_\lambda)_0(\tfrac{d}{dt}) = \lambda \tfrac{d}{dt}$, the chain rule (Proposition \ref{prop:chain-rule}) gives
		$$ \Phi(\lambda v) = d(c_{\lambda v})_0\big(\tfrac{d}{dt}\big) = d(c_v \circ r_\lambda)_0\big(\tfrac{d}{dt}\big) = d(c_v)_0\big(\lambda \tfrac{d}{dt}\big) = \lambda ~ d(c_v)_0\big(\tfrac{d}{dt}\big) = \lambda ~ \Phi(v). $$
		
		To prove injectivity, suppose $\Phi(v) = 0$. Let $\ell \in E'$ be any continuous linear functional. As the diffeology of $E$ is finer than canonical, $\ell$ is smooth. The composite $\ell \circ c_v: \mathbb{R} \rightarrow \mathbb{R}$ is given by $t \mapsto \ell(x) + t\ell(v)$.
		Applying the chain rule once more,
		$$ (\ell \circ c_v)'(0) = d\ell_x \big( \Phi(v) \big) = d\ell_x(0) = 0. $$
		Direct calculation shows $(\ell \circ c_v)'(0) = \ell(v)$. Thus, $\ell(v)=0$ for all $\ell \in E'$. Since $E$ is Hausdorff, \cite[Theorem 7.7.7(b)]{NB} implies v=0.
	\end{proof}
	
	\begin{theorem}\label{the:convenient-tangent}
		Let $E$ be a Hausdorff LCTVS endowed with the canonical diffeology, and let $x\in E$.
		The space $E$ is convenient if and only if the canonical linear map
		$\Phi:E\rightarrow T_xE$
		is an isomorphism of vector spaces.
	\end{theorem}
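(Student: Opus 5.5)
The plan is to prove the two implications separately. For the forward direction we build a left inverse $\Psi$ of $\Phi$ from the colimit property and then show that $\Phi\circ\Psi=\mathrm{id}$. For the converse we feed the pathological curve of Lemma \ref{lem:pathological-curve-lemma} into $T_xE$ and test it against continuous functionals. Throughout we identify $T_a\mathbb R\cong\mathbb R$, so that for $\ell\in E'$ and any plot $P$ centered at $x$ we have $d\ell_x(dP_0(w))=d(\ell\circ P)_0(w)$, the ordinary derivative. In particular $d\ell_x(\Phi(v))=\ell(v)$, as in the proof of Proposition \ref{prop:linear-injection}.

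\emph{Converse ($\Phi$ surjective $\Rightarrow$ $E$ convenient).} Suppose $E$ is not convenient. Let $c$ be the curve from Lemma \ref{lem:pathological-curve-lemma}, and set $P(t)=x+c(t)$, a plot of the canonical diffeology centered at $x$. If $\Phi$ were surjective, we would have $dP_0(\tfrac{d}{dt})=\Phi(v)$ for some $v\in E$. Applying $d\ell_x$ for $\ell\in E'$ gives
\[
\ell(v)=(\ell\circ c)'(0)=\lim_{t\to0}\ell\bigl(c(t)/t\bigr)=\hat\ell(z),
\]
where $\hat\ell$ is the continuous extension of $\ell$ to $\widehat E$. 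Since $(\widehat E)'$ restricts bijectively onto $E'$ and $\widehat E$ is Hausdorff locally convex, this forces $z=v\in E$, contradicting $z\notin E$. Hence $\Phi$ is not surjective, and by Proposition \ref{prop:linear-injection} it is never non-injective, so it fails to be an isomorphism.

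\emph{Forward direction.} Assume $E$ is convenient. By Proposition \ref{prop:convenient_diffeology} every plot is then $c^\infty$, hence Bastiani--Keller smooth by Proposition \ref{prop:Bastiani-Keller-diffeology}. So for each plot $P:U\to E$ centered at $x$, the map $\Psi_P:=\partial P(0,\cdot):T_0U\to E$ is linear. The chain rule for Bastiani maps gives $\Psi_{P\circ F}=\Psi_P\circ dF_0$, so the $\Psi_P$ form a cocone and induce a linear map $\Psi:T_xE\to E$ with $\Psi\circ dP_0=\Psi_P$. Since $\Psi\circ\Phi(v)=\partial c_v(0,1)=v$, the map $\Psi$ is a left inverse of $\Phi$.

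It remains to show $dP_0(w)=\Phi(\partial P(0,w))$ for every plot $P$ centered at $x$; this gives $\Phi\circ\Psi=\mathrm{id}$ on generators of the colimit, and so $\Phi$ is surjective. This is the main step, and I would use a Hadamard decomposition. Shrink $U$ to a ball around $0$ and set
\[
Q_i(r)=\int_0^1\partial_iP(tr)\,dt .
\]
These integrals exist in $E$ because $t\mapsto\partial_iP(tr)$ is a smooth curve and $E$ is Mackey complete. Each $Q_i$ is scalarwise smooth, since $\ell\circ Q_i$ is an integral of $\partial_i(\ell\circ P)$ with parameters. Moreover $P(r)=x+\sum_i r_iQ_i(r)$, which can be checked after applying each $\ell$ and then using Hausdorffness.

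Now define the plot $R(r,s)=x+\sum_i s_iQ_i(r)$ on $U\times\mathbb R^n$, so that $P=R\circ\delta$ with $\delta(r)=(r,r)$. Because $R(r,0)=x$ is constant, $dR_0$ vanishes on the $r$-directions. On the $s$-directions, $R(0,\cdot)$ is the linear plot $s\mapsto x+\sum_i s_iQ_i(0)$, and linearity of $\Phi$ (via $G$ as in Proposition \ref{prop:linear-injection}) gives $dR_0(0,w)=\Phi\bigl(\sum_i w_iQ_i(0)\bigr)=\Phi(\partial P(0,w))$. Hence $dP_0(w)=dR_0(w,w)=\Phi(\partial P(0,w))$.

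The anticipated obstacle is precisely this last step: making the integral representation and the smoothness of the $Q_i$ rigorous. This is where Mackey completeness enters a second time.
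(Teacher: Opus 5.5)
Your proof is correct. The converse is essentially the paper's argument: you translate the curve of Lemma~\ref{lem:pathological-curve-lemma} to $x$, write its tangent vector as $\Phi(v)$, and apply $d\ell_x$. Since $(\widehat E)'$ restricts bijectively onto $E'$, this gives $v=z\notin E$, a contradiction.

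The forward direction uses the same core device as the paper, but organizes it differently. The shared device is to write the plot as $x$ plus a parameter times a smooth quotient, view it as a plot in two blocks of variables composed with the diagonal, and split the differential into a piece from a constant plot and a piece from a straight line.

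The paper's bookkeeping runs as follows. It first invokes \cite[Corollary 6.5]{HM-V} to represent every $\xi\in T_xE$ by a single $1$-plot. It then uses \cite[Corollary 3.16]{KM} for the smoothness of the difference quotient $F(t)=(P(t)-P(0))/t$, with $H(s,t)=x+sF(t)$.

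You instead handle every $n$-plot at once through the integral Hadamard decomposition $P(r)=x+\sum_i r_iQ_i(r)$. You therefore only need that $T_xE$ is spanned by the images $dP_0(T_0U)$, which is automatic for a colimit in $\mathsf{Vect}$. The price is the existence of $\int_0^1\partial_iP(tr)\,dt$ in $E$, which is one of the equivalent forms of Mackey completeness listed in \cite[Theorem 2.14]{KM}.

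The obstacle you anticipate is milder than it looks. Because the diffeology is the canonical one, the $Q_i$ only need to be scalarwise smooth. That follows immediately from differentiating $\ell\circ Q_i(r)=\int_0^1\partial_i(\ell\circ P)(tr)\,dt$ under the integral sign.

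Your extra cocone $\Psi_P=\partial P(0,\cdot)$ is not needed for bijectivity, since Proposition~\ref{prop:linear-injection} already gives injectivity. What it buys is an explicit formula for the inverse, $\Phi^{-1}\bigl(dP_0(w)\bigr)=\partial P(0,w)$, which the paper's proof does not record.

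Overall, your version is more self-contained and slightly more informative. The paper's is shorter because it outsources the reduction to curves and the regularity of the difference quotient.
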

	
	\begin{proof}
		First, assume that $E$ is convenient. By Proposition \ref{prop:linear-injection}, $\Phi$ is injective. To show surjectivity, let $\xi \in T_x E$. By \cite[Corollary 6.5]{HM-V}, $\xi$ can be represented as $dP_0(\tfrac{d}{dt})$ for some 1-plot $P: \mathbb{R} \rightarrow E$ with $P(0) = x$.
		Because $E$ is convenient, Proposition \ref{prop:convenient_diffeology} implies that $P$ is smooth in the sense of \cite{KM}. Consequently, the difference quotient curve $F: \mathbb{R} \rightarrow E$
		$$ F(t) = \begin{cases}
			\frac{P(t)-P(0)}{t}, & t \neq 0,\\
			P'(0), & t=0,
		\end{cases} $$
		is also smooth by \cite[Corollary 3.16]{KM}. Set $v:=P'(0)\in E$.
		Define $H: \mathbb{R}^2 \rightarrow E$ by $H(s, t) = x + sF(t)$. Since $F$ is a smooth curve and vector operations are smooth, $H$ is a 2-plot.
		Using the paths $\alpha, \beta, \gamma$ from Proposition \ref{prop:linear-injection}, we get
		\begin{itemize}
			\item $H \circ \alpha(t) = x + tF(t) = P(t)$.
			\item $H \circ \beta(t) = x + tF(0) = x + tv = c_v(t)$.
			\item $H \circ \gamma(t) = x + 0=x $ (the constant path at $x$).
		\end{itemize}
		Therefore,
		\begin{align*}
			\xi
			= dP_0\left(\tfrac{d}{dt}\right)
			&= d(H\circ\alpha)_0\!\left(\tfrac{d}{dt}\right) \\
			&= dH_{(0,0)}\left(d\alpha_0\left(\tfrac{d}{dt}\right)\right) \\
			&= dH_{(0,0)}\left(d\beta_0\left(\tfrac{d}{dt}\right)+d\gamma_0\left(\tfrac{d}{dt}\right)\right) \\
			&= d(H\circ\beta)_0\left(\tfrac{d}{dt}\right)+d(H\circ\gamma)_0\left(\tfrac{d}{dt}\right) \\
			&= \Phi(v)+0
			= \Phi(v),
		\end{align*}
		where $d(H\circ\gamma)_0=0$ because $H\circ\gamma$ is locally constant.
		Thus $\Phi$ is surjective.
		
		Conversely, suppose $\Phi$ is an isomorphism and assume for contradiction that $E$ is not convenient. By Lemma \ref{lem:pathological-curve-lemma}, after translating by $x$ if necessary, one can find a scalarwise smooth curve $P$, i.e. a $1$-plot in the canonical diffeology, with $P(0)=x$ such that
		$$\frac{P(t)-P(0)}{t}\longrightarrow z
		\qquad\text{in }\widehat E$$
		for some $z\in \widehat E\setminus E.$
		In particular, $P$ has derivative $z$ at $0$ when regarded as a curve with values in $\widehat E$.
		Consider now the tangent vector $dP_0(\tfrac{d}{dt}) \in T_x E$. Since $\Phi$ is surjective, there exists $v \in E$ such that
		$\Phi(v)=dP_0 \left(\tfrac{d}{dt}\right).$
		Let $\ell \in E'$ be arbitrary, and let $\tilde{\ell} \in \widehat{E}'$ be its unique continuous extension. Then
		$$
		\tilde{\ell}(v)=\ell(v)=d\ell_x(\Phi(v))
		=d\ell_x \left(dP_0 \left(\tfrac{d}{dt}\right)\right)
		=(\ell\circ P)'(0).
		$$
		On the other hand, since $P$ has derivative $z$ at $0$ as a
		$\widehat E$-valued curve,
		$$
		\widetilde{\ell}(z)
		=
		\lim_{t\to 0}
		\widetilde{\ell}\left(\frac{P(t)-P(0)}{t}\right)
		=
		\lim_{t\to 0}
		\frac{\ell(P(t))-\ell(P(0))}{t}
		=
		(\ell\circ P)'(0).
		$$
		Thus
		$$
		\widetilde{\ell}(v)=\widetilde{\ell}(z) \qquad\text{for all }\ell\in E'.
		$$
		Since every continuous linear functional on $\widehat{E}$ restricts to a continuous linear functional on $E$, and every continuous linear functional on $E$ extends uniquely to $\widehat{E}$, the functionals $\tilde{\ell}$ arising from $\ell\in E'$ are exactly all elements of $\widehat{E}'$. Hence
		$$
		\lambda(v)=\lambda(z)
		\qquad\text{for all }\lambda\in\widehat{E}'.
		$$
		Because $\widehat E$ is Hausdorff locally convex, its continuous dual separates
		points. Therefore $v=z$. But $v\in E$, whereas $z\in\widehat E\setminus E$, a
		contradiction. Hence $E$ is convenient.
	\end{proof}
	
	\section{Induced diffeologies on locally convex spaces}\label{S5}
	In this section, we construct and analyze several natural diffeologies induced by standard functional-analytic constructions. We introduce the completion diffeology, the dual diffeology, and the inductive limit diffeology, and compare them with the canonical diffeology.
	
	\subsection{The completion diffeology}
	Completeness is a crucial property ensuring the existence of limits for Cauchy nets, which is foundational for integration and differential equations. We define the completion diffeology by declaring plots into the completion to arise as locally pointwise limits of Cauchy nets of plots in the original space.
	
	Let $E$ be an LCTVS equipped with the $c^\infty$-diffeology or equivalently, Bastiani--Keller diffeology (see Proposition \ref{prop:Bastiani-Keller-diffeology}).
	Let $C^{\infty}(U,E)$ denote the space of plots on a specific domain $U$, endowed with the topology of uniform convergence of all derivatives on compact sets. This topology is generated by the seminorms
	$$ s_{m,K,q}(P)=\sup_{|\alpha|\le m}\sup_{r\in K}q\big(\partial^\alpha P(r)\big), $$
	where $m\in\mathbb{N}$, $K\Subset U$, and $q$ is a continuous seminorm on $E$ (see \cite[Lemma 1.3]{HS}).
	
	\begin{definition}
		Let $E$ and $\mathbf{E}$ be LCTVSs, and let $\imath:E\hookrightarrow\mathbf{E}$ be a continuous linear injection with $\imath(E)$ dense in $\mathbf{E}$.
		We call a parametrization $P:U\rightarrow \mathbf{E}$ a generating plot if
		\begin{itemize}
			\item There exists a net $(Q_\varepsilon)_{\varepsilon\in I}$ in $C^{\infty}(U,E)$ such that $(\imath\circ Q_\varepsilon)_{\varepsilon\in I}$ is Cauchy in $C^{\infty}(U,\mathbf{E})$ and $P(r)=\lim_\varepsilon (\imath\circ Q_\varepsilon)(r)$ for all $r\in U$.
		\end{itemize}
		The \textit{completion diffeology} on $\mathbf{E}$ is the diffeology generated by the collection $\mathcal{C}$ of generating plots. In this situation, $\imath$ is smooth, though not necessarily an induction.
	\end{definition}
	
	\begin{remark}
		Note that the collection $\mathcal{C}$ of generating plots is not a diffeology, but it forms a prediffeology on $\mathbf{E}$. Indeed, constant parametrizations belong to $\mathcal C$ because $\imath(E)$ is dense in $\mathbf E$ and convergent constant nets are Cauchy. Moreover, if $P$ is represented by a Cauchy net $(Q_\varepsilon)$ and $F:V\rightarrow U$ is smooth, then $P\circ F$ is represented by $(Q_\varepsilon\circ F)$, which is again Cauchy in $C^\infty(V,\mathbf E)$. Thus $\mathcal C$ satisfies axioms D1 and D2.
	\end{remark}
	
	\begin{proposition}\label{prop:completion-canonical}
		Let $E$ and $\mathbf{E}$ be LCTVSs. Suppose that $\imath:E\hookrightarrow\mathbf{E}$ is a continuous linear injection with dense image. Then every plot of the completion diffeology on $\mathbf{E}$ is a scalarwise smooth plot.
	\end{proposition}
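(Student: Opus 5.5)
Since the completion diffeology is generated by the collection $\mathcal C$ of generating plots, and the canonical diffeology on $\mathbf E$ is a diffeology (so it is closed under the operations D2 and D3 used to pass from $\mathcal C$ to $\langle\mathcal C\rangle$), it suffices to show that every generating plot $P\in\mathcal C$ is scalarwise smooth. Indeed, scalarwise smoothness is preserved by precomposition with smooth maps between domains, and it is a local property, so suprema of compatible families of scalarwise smooth parametrizations remain scalarwise smooth. This reduces the statement to one generating plot.

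So fix $P:U\rightarrow\mathbf E$ together with a net $(Q_\varepsilon)_{\varepsilon\in I}$ in $C^\infty(U,E)$ such that $(\imath\circ Q_\varepsilon)$ is Cauchy in $C^\infty(U,\mathbf E)$ and converges pointwise to $P$. Fix $\ell\in\mathbf E'$ and put $f_\varepsilon:=\ell\circ\imath\circ Q_\varepsilon:U\rightarrow\mathbb R$. Each $f_\varepsilon$ is a $C^\infty$ function: $\ell\circ\imath\in E'$ is smooth for the $c^\infty$-diffeology by Proposition \ref{prop:linear_smooth}, and $Q_\varepsilon$ is a $c^\infty$-plot. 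Moreover, since $\ell$ is continuous linear, $q:=|\ell|$ is a continuous seminorm on $\mathbf E$ and $\partial^\alpha f_\varepsilon=\ell\circ\partial^\alpha(\imath\circ Q_\varepsilon)$. Hence
$$
\sup_{|\alpha|\le m}\sup_{r\in K}\bigl|\partial^\alpha f_\varepsilon(r)-\partial^\alpha f_\delta(r)\bigr|
= s_{m,K,q}(\imath\circ Q_\varepsilon-\imath\circ Q_\delta),
$$
so $(f_\varepsilon)$ is a Cauchy net in the Fréchet space $C^\infty(U,\mathbb R)$ with its usual topology of uniform convergence of all derivatives on compacts.

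By completeness of $C^\infty(U,\mathbb R)$, the net $(f_\varepsilon)$ converges in that topology to some $f\in C^\infty(U,\mathbb R)$, and in particular pointwise. On the other hand, $f_\varepsilon(r)=\ell(\imath(Q_\varepsilon(r)))\to\ell(P(r))$ by continuity of $\ell$ and the pointwise convergence to $P$. By uniqueness of limits in $\mathbb R$, $\ell\circ P=f$ is smooth. Since $\ell$ was arbitrary, $P$ is scalarwise smooth.

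The only delicate point is completeness for \emph{nets}: $C^\infty(U,\mathbb R)$ is a complete metrizable space, and a Cauchy net in a complete uniform space converges, so no sequence extraction is needed. One also has to check that derivatives commute with $\ell$, which follows from linearity and continuity of $\ell$ applied to difference quotients. I expect no real obstacle beyond this bookkeeping.
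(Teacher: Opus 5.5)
Your proof is correct and follows the same route as the paper. Both reduce to generating plots, show that $\ell\circ\imath\circ Q_\varepsilon$ is Cauchy in $C^\infty(U)$, take its smooth limit $f$ by completeness, and identify $f=\ell\circ P$ pointwise. You also make explicit three points the paper leaves implicit: why the reduction to generating plots is legitimate, the Cauchy estimate via the seminorm $|\ell|$, and completeness of $C^\infty(U)$ for nets.
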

	
	\begin{proof}
		It is enough to prove that every generating plot of the completion diffeology is scalarwise smooth.
		Let $P:U\rightarrow \mathbf{E}$ be a parametrization with the property that there exists a net $(Q_\varepsilon)_{\varepsilon\in I}$ in $C^{\infty}(U,E)$ such that $(\imath\circ Q_\varepsilon)_{\varepsilon\in I}$ is Cauchy in $C^{\infty}(U,\mathbf{E})$ and $P(r)=\lim_\varepsilon (\imath\circ Q_\varepsilon)(r)$ for all $r\in U$.
		Let $\ell:\mathbf{E}\rightarrow\mathbb{R}$ be a continuous linear functional. Then
		$\ell\circ\imath\circ Q_\varepsilon$ is Cauchy in $C^\infty(U)$.
		Since $C^\infty(U)$ is complete, it converges to some smooth function $f\in\mathrm{C}^{\infty}(U)$.
		For every $r\in U$,
		$$(\ell\circ P)(r)=\ell\circ\lim(\imath\circ Q_\varepsilon)(r)=\lim (\ell\circ\imath\circ Q_\varepsilon)(r)=f(r).$$
		Therefore $\ell\circ P\in C^\infty(U)$ for every $\ell\in \mathbf{E}'$. Hence $P$ is scalarwise smooth.
	\end{proof}
	In other words, the completion diffeology is finer than the canonical diffeology.
	We now identify sufficient conditions under which the completion diffeology coincides with the canonical diffeology.
	 
	\begin{theorem}\label{thm:completion-barrelled}
		Let $E$ and $\mathbf{E}$ be LCTVSs, and let $\imath:E\hookrightarrow \mathbf{E}$ be a continuous linear injection. Suppose that
		\begin{enumerate} 
			\item $\mathbf{E}$ is a barrelled convenient vector space,
			\item there exists a sequence of continuous linear operators $S_n:\mathbf{E}\rightarrow E$ such that for every $x\in\mathbf{E}$, the sequence $\imath\circ S_n (x)$ converges to $x$ in $\mathbf{E}$.
		\end{enumerate}
		Then $\imath$ has dense image, and for every plot $P:U\rightarrow\mathbf{E}$ of the convenient diffeology, the sequence $(\imath\circ S_n \circ P)$ converges to $P$ in $C^{\infty}(U,\mathbf{E})$. As a result, the completion diffeology and the convenient diffeology on $\mathbf{E}$ coincide.
	\end{theorem}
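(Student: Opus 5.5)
Density of $\imath(E)$ is immediate from (2): every $x\in\mathbf{E}$ is the limit of $\imath(S_n x)\in\imath(E)$. For the main claim, I would set $T_n:=\imath\circ S_n:\mathbf{E}\rightarrow\mathbf{E}$, a sequence of continuous linear operators converging pointwise to the identity. Since $\mathbf{E}$ is barrelled, the Banach--Steinhaus theorem makes $\{T_n\}$ equicontinuous. Concretely, for every continuous seminorm $q$ on $\mathbf{E}$ there is a continuous seminorm $p$ with $q(T_n y)\le p(y)$ for all $n$ and all $y$. Moreover, on precompact (in particular compact) subsets of $\mathbf{E}$, an equicontinuous family that converges pointwise converges uniformly. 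These two facts are the core of the argument.

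Next, let $P:U\rightarrow\mathbf{E}$ be a plot of the convenient diffeology. By Proposition \ref{prop:convenient_diffeology} and Proposition \ref{prop:Bastiani-Keller-diffeology}, $P$ is Bastiani--Keller smooth. Hence every partial derivative $\partial^\alpha P:U\rightarrow\mathbf{E}$ exists and is continuous, and for each compact $K\Subset U$ the set $\partial^\alpha P(K)$ is compact in $\mathbf{E}$. Since $T_n$ is continuous linear, $T_n\circ P$ is smooth with $\partial^\alpha(T_n\circ P)=T_n\circ\partial^\alpha P$. Therefore
$$
s_{m,K,q}(T_n\circ P-P)=\sup_{|\alpha|\le m}\sup_{y\in\partial^\alpha P(K)}q(T_ny-y).
$$
This tends to $0$ by uniform convergence of $T_n\to\mathrm{id}$ on the finitely many compact sets $\partial^\alpha P(K)$, $|\alpha|\le m$. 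So $\imath\circ S_n\circ P\to P$ in $C^\infty(U,\mathbf{E})$.

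It remains to show that the two diffeologies coincide. Each $S_n\circ P$ lies in $C^\infty(U,E)$, because $S_n$ is continuous linear, hence smooth for the $c^\infty$-diffeologies by Proposition \ref{prop:linear_smooth}. A convergent sequence is Cauchy, and its pointwise limit is $P$. So $P$ is a generating plot, and the convenient diffeology is contained in the completion diffeology. The reverse inclusion is Proposition \ref{prop:completion-canonical}: completion plots are scalarwise smooth, and scalarwise smooth plots are exactly the convenient plots because $\mathbf{E}$ is convenient.

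The main obstacle I expect is the uniform-on-compacts step. It requires the equicontinuity to come out of barrelledness, and I would cite the uniform boundedness principle for barrelled spaces (e.g.\ \cite{NB}) for this. It also requires checking that $\partial^\alpha P$ is genuinely continuous into $\mathbf{E}$ in the original topology, so that $\partial^\alpha P(K)$ is compact. Bastiani--Keller smoothness gives exactly this. If one only had $c^\infty$-continuity, I would instead use that derivatives of smooth maps are locally Lipschitz (as in the locally Lipschitzian proposition) and hence continuous in the original topology.
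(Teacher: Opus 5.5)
Your proposal is correct and follows essentially the same route as the paper. Banach--Steinhaus on the barrelled space $\mathbf{E}$ makes $T_n=\imath\circ S_n$ equicontinuous, so $T_n\to\mathrm{id}$ uniformly on the compact sets $\bigcup_{|\alpha|\le m}\partial^\alpha P(K)$; together with $\partial^\alpha(T_n\circ P)=T_n\circ\partial^\alpha P$ this gives convergence in $C^\infty(U,\mathbf{E})$, and the reverse inclusion comes from Proposition \ref{prop:completion-canonical}. Your justification that $S_n\circ P\in C^\infty(U,E)$ via Proposition \ref{prop:linear_smooth} (the $c^\infty$-diffeology) is in fact slightly more precise than the paper's appeal to the canonical diffeologies, since $E$ itself need not be convenient.
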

	\begin{proof}
		First, let $T_n = \imath \circ S_n$. The sequence $(T_n)$ consists of continuous linear operators from $\mathbf{E}$ to itself such that $T_n(x)$ converges to $x$ for each $x \in \mathbf{E}$. Since $T_n(x)\in \imath(E)$ and $T_n(x)\rightarrow x$, the image $\imath(E)$ is dense in $\mathbf E$.
		Moreover,
		$\mathcal T=\{T_n:n\in\mathbb N\}$ is simply bounded. As $\mathbf{E}$ is barrelled, Banach--Steinhaus implies that $\mathcal T$ is equicontinuous (see \cite[Chapter III, \S 4.2]{SW}). Therefore
		$H:=\mathcal T\cup\{\operatorname{id}_{\mathbf{E}}\}$
		is equicontinuous. By \cite[Chapter III, Theorem 4.5]{SW}, the topology of
		simple convergence and the topology of precompact convergence coincide on
		$H$. Since $T_n\rightarrow\operatorname{id}_{\mathbf{E}}$ pointwise, it follows
		that $T_n\rightarrow\operatorname{id}_{\mathbf{E}}$ uniformly on every precompact
		subset of $\mathbf{E}$, hence uniformly on every compact subset.
		Equivalently, for every compact $C\subseteq\mathbf{E}$ and every continuous
		seminorm $q$ on $\mathbf{E}$,
		$$
		\sup_{y\in C}q(T_ny-y)\longrightarrow 0.
		$$
		
		Now, let $P:U\rightarrow\mathbf{E}$ be an arbitrary plot of the convenient diffeology. We form a sequence of approximations $Q_n = S_n\circ P:U\rightarrow E$, which are plots in $E$ because $S_n$ is continuous linear and therefore smooth for the canonical diffeologies.
		To show that the sequence $(\imath \circ Q_n)$ converges to $P$ in the topology of $\mathrm{C}^{\infty}(U,\mathbf{E})$,
		fix a continuous seminorm $q$ on $\mathbf{E}$, a compact set $K\Subset U$, and $m \in \mathbb{N}$. Set
		$$C=\bigcup_{|\alpha|\le m}\{\partial^\alpha P(r)\mid r\in K\}\subseteq\mathbf{E}.$$
		Since $P$ is smooth, its partial derivatives are continuous. The continuous image of a compact set is compact, so C is a compact subset of $\mathbf{E}$.
		But $T_n$ converges uniformly to the identity on $C$, so
		$$ \sup_{|\alpha|\le m}\sup_{r \in K} q\left( T_n(\partial^\alpha P(r)) - \partial^\alpha P(r) \right) \longrightarrow 0. $$
		Since $T_n = \imath \circ S_n$ is a continuous linear map, it commutes
		with differentiation: $\partial^\alpha(T_n \circ P) = T_n \circ \partial^\alpha P$.
		Thus $\imath \circ Q_n \rightarrow P$ in $C^\infty(U, \mathbf{E})$.
		
		In particular, the sequence $(\imath\circ Q_n)$ is Cauchy and converges pointwise to $P$. Therefore, $P$ is a plot of the completion diffeology. Thus the convenient diffeology is contained in the completion diffeology.
		Conversely, by Proposition \ref{prop:completion-canonical}, every plot of the completion diffeology is a plot of the canonical diffeology on $\mathbf E$. Since $\mathbf E$ is convenient, the canonical diffeology and the convenient diffeology are the same by convention (see Proposition \ref{prop:convenient_diffeology}). Hence every completion plot is a convenient plot.
	\end{proof}
	
	\subsection{The dual diffeology}
	We introduce several natural diffeologies on the continuous dual space and compare them.
	Throughout this subsection, $E'$ denotes the continuous dual of a topological vector space $E$, and presuppose no choice of vector space topology on $E'$.
	\begin{definition}
		Let $E$ be a topological vector space.
		The \textit{dual diffeology} on $E'$ is the diffeology consisting of all parametrizations $P: U \rightarrow E'$ with the property that for each $x \in E$, the map $\mathrm{ev}_x\circ P:U\rightarrow\mathbb{R}, r \mapsto \langle P(r), x \rangle$ is smooth.
	\end{definition}
	
	Thus the dual diffeology is the coarsest diffeology on $E'$ for which all
	evaluation maps
	$\mathrm{ev}_x:E'\rightarrow\mathbb R,$
	with $x\in E$, are smooth. This is formally dual to the canonical
	diffeology on $E$, which is the coarsest diffeology making every
	$\ell\in E'$ smooth.
	
	\begin{definition}
		Let $E$ be a topological vector space endowed with a vector space diffeology. A parametrization $P: U \rightarrow E'$ is a plot of the \textit{functional diffeology} on $E'$ if the associated
		transpose map $P^{\vee}:U\times E\rightarrow\mathbb{R},$ given by $ (r,x)\mapsto \langle P(r), x \rangle$, is smooth.
	\end{definition}
	
	\begin{remark}
		The functional diffeology on $E'$ is precisely the subspace diffeology
		inherited from $C^\infty(E,\mathbb R)$ via the inclusion
		$E'\hookrightarrow C^\infty(E,\mathbb R).$
		In particular, the evaluation map
		$\mathrm{ev}:E'\times E\rightarrow\mathbb R,$
		is smooth for the functional diffeology. Hence the functional diffeology is
		finer than the dual diffeology.
		
		Moreover, the functional diffeology is a vector space diffeology on $E'$;
		addition and scalar multiplication are smooth because they are induced from the
		corresponding smooth operations on $C^\infty(E,\mathbb R)$. Consequently,
		the fine diffeology on $E'$, being the finest vector space diffeology, is finer than the functional diffeology.
	\end{remark}
	
	\begin{example}
		Let $E$ be a Banach space with norm $\|\cdot\|$. Equip $E'$ with the dual norm
		$\|\ell\|_{E'} = \sup_{\|x\|\le 1} |\ell(x)|.$
		The evaluation map
		$$\mathrm{ev}:E'\times E \longrightarrow \mathbb{R},\qquad (\ell,x)\mapsto \ell(x)$$
		is a bounded bilinear map. So $\mathrm{ev}$ is smooth in the convenient sense and indeed, with respect to the convenient diffeology.
		Therefore, the functional diffeology is coarser than the convenient diffeology on $E'$.
	\end{example}
	
	\begin{lemma}\label{lem:reflexive}
		Let $E$ be a topological vector space. Suppose that $E'$ is equipped with a vector space topology and $J:E\rightarrow (E')', x\mapsto \mathrm{ev}_x$ is a surjective map. Then:
		\begin{enumerate}
			\item the dual diffeology and the canonical diffeology on $E'$ coincide;
			\item the functional diffeology is finer than the canonical diffeology on $E'$.
		\end{enumerate}
	\end{lemma}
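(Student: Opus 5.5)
Both parts reduce to unwinding definitions once we identify the continuous dual $(E')'$ with the evaluations $\mathrm{ev}_x$, $x\in E$. The map $J$ lands in $(E')'$ by hypothesis (each $\mathrm{ev}_x$ is assumed continuous for the chosen topology on $E'$), and it is surjective by assumption. So the set of continuous linear functionals on $E'$ is exactly $\{\mathrm{ev}_x : x\in E\}$.

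For (1), recall the two definitions. A parametrization $P:U\rightarrow E'$ is a plot of the canonical diffeology iff $\lambda\circ P$ is smooth for every $\lambda\in(E')'$. It is a plot of the dual diffeology iff $\mathrm{ev}_x\circ P$ is smooth for every $x\in E$. Since the two families of functionals coincide, the two collections of plots are literally the same.

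For (2), I would use the Remark following the definition of the functional diffeology: the functional diffeology is finer than the dual diffeology, because the evaluation map $\mathrm{ev}:E'\times E\rightarrow\mathbb R$ is smooth. Concretely, if $P^\vee:U\times E\rightarrow\mathbb R$ is smooth, fix $x\in E$ and compose $P^\vee$ with the plot $r\mapsto (r,x)$ of $U\times E$. This plot is smooth because constant maps into $E$ are plots and the product diffeology is used. The composite is $\mathrm{ev}_x\circ P$, which is therefore smooth. Combining this with (1), every functional plot is a canonical plot.

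The only point needing care is making sure $J$ genuinely takes values in $(E')'$, i.e.\ that each $\mathrm{ev}_x$ is continuous for the given vector space topology on $E'$. This is built into the hypothesis that $J$ is a surjective map $E\rightarrow(E')'$, so I would simply state it explicitly. Beyond that, there is no real obstacle, and no further properties of the topology on $E'$ are used.
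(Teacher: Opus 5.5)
Your proposal is correct and follows essentially the same route as the paper: (1) is the identification $(E')'=\{\mathrm{ev}_x : x\in E\}$, and (2) combines the smoothness of each $\mathrm{ev}_x$ for the functional diffeology with (1). The paper states both the smoothness of $\mathrm{ev}_x$ and the fact that $J$ lands in $(E')'$ without comment. You make them explicit: the first via the plot $r\mapsto(r,x)$ of $U\times E$, and the second because it is needed for the inclusion of the canonical diffeology into the dual one.
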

	
	\begin{proof}
		(1)
		By surjectivity of $J$, every continuous linear functional $\ell\in (E')'$ is of the form $\ell=\mathrm{ev}_x$ for some $x\in E$.
		By definition, this implies that the dual diffeology and the canonical diffeology on $E'$ coincide.
		
		(2) Obviously, the map $\mathrm{ev}_x:E'\rightarrow\mathbb{R}, f\mapsto f(x)$ is smooth for the functional diffeology.
		Hence if $P:U\rightarrow E'$ is a plot in $E'$ for the functional diffeology and $\ell\in (E')'$, then by (1) $\ell\circ P=\mathrm{ev}_x\circ P$ is smooth.
		Therefore $P$ is a plot of the canonical diffeology, showing that the functional diffeology is finer.
	\end{proof}
	
	\begin{remark}\label{rem:Mackey-Arens}
		Let $E$ be an LCTVS, and let $E'$ be equipped with a locally convex
		topology $\tau$ compatible with the duality, i.e.
		$\sigma(E', E) \subseteq \tau \subseteq \tau(E', E).$
		Then the Mackey--Arens theorem implies that
		$(E', \tau)' \cong E.$
		Equivalently, the evaluation map $J: E \rightarrow (E')'$ is surjective. Consequently, by Lemma \ref{lem:reflexive}, the dual diffeology and the canonical diffeology on $E'$ are the same, and the functional diffeology is finer than the canonical diffeology.
	\end{remark}
	
	\begin{proposition}\label{prop:Montel}
		Let $E$ be a Montel space and equip $E'$ with the strong dual topology. Then the dual, functional, and convenient diffeologies on $E'$ coincide.
	\end{proposition}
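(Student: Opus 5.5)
The plan is to prove a chain of inclusions
\[
\text{convenient}\subseteq\text{functional}\subseteq\text{dual}=\text{canonical}=\text{convenient}
\]
on $E'_\beta$. First, a Montel space is barrelled and hence quasi-barrelled, so $E'_\beta$ carries a topology compatible with the duality. More precisely, $E$ is reflexive, so $(E'_\beta)'=E$ and the evaluation map $J:E\to(E'_\beta)'$ is surjective. Lemma \ref{lem:reflexive} then gives two facts at once: the dual diffeology equals the canonical diffeology on $E'$, and the functional diffeology is finer than it. Next, the strong dual of a Montel space is again Montel, in particular complete, hence Mackey complete and therefore convenient. By Proposition \ref{prop:convenient_diffeology} the canonical diffeology on $E'_\beta$ then coincides with the convenient diffeology. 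So far we have
\[
\text{functional}\subseteq\text{dual}=\text{canonical}=\text{convenient}.
\]

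It remains to show that every convenient plot $P:U\to E'$ is a functional plot, i.e.\ that $P^\vee:U\times E\to\mathbb R$ is smooth for the product of the given diffeology on $E$ and the standard one on $U$. I will read the diffeology on $E$ as the canonical one, which equals the convenient diffeology once $E$ is convenient; a Montel space is quasi-complete, hence Mackey complete. It suffices to check plots of the form $(\alpha,Q):V\to U\times E$, with $\alpha$ smooth and $Q$ a convenient plot of $E$, and to show that $s\mapsto\langle P(\alpha(s)),Q(s)\rangle$ is smooth. Since $P\circ\alpha$ is again a convenient plot of $E'_\beta$, this reduces to showing that the evaluation $\mathrm{ev}:E'_\beta\times E\to\mathbb R$ is $c^\infty$-smooth, or at least smooth along $c^\infty$-curves. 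The evaluation is bilinear, so by \cite{KM} (bilinear maps are smooth iff bounded) it suffices to show that $\mathrm{ev}$ maps products of bounded sets to bounded sets. This holds: if $B'\subseteq E'$ is strongly bounded and $B\subseteq E$ is bounded, then $\sup_{\ell\in B',x\in B}|\ell(x)|<\infty$ is exactly the defining property of strong boundedness, namely uniform boundedness on bounded subsets of $E$. Hence $\mathrm{ev}$ is bounded bilinear, therefore convenient smooth, and $\mathrm{ev}\circ(P\circ\alpha,Q)$ is smooth. This gives convenient $\subseteq$ functional and closes the chain.

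The main obstacle is this last step. Two points need care. First, one must use the product diffeology on $U\times E$ rather than a $c^\infty$-structure on the product space, and note that the $c^\infty$-diffeology of a product of convenient spaces is the product diffeology, which holds since both are detected on curves componentwise. Second, one must cite the right result of \cite{KM}, namely that bounded multilinear maps between convenient spaces are smooth (\cite[Lemma 5.5]{KM}). A possible subtlety is whether a different diffeology on $E$ is intended in the functional definition. If it is the $c^\infty$-diffeology, it agrees with the canonical one by convenience of $E$, so the argument is unaffected.
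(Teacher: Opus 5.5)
Your proposal is correct and follows essentially the paper's route: reflexivity plus Lemma~\ref{lem:reflexive} gives dual $=$ canonical and functional $\subseteq$ canonical; the strong dual of a Montel space is Montel, hence convenient, so canonical $=$ convenient; and boundedness of $\mathrm{ev}:E'_\beta\times E\to\mathbb R$ together with \cite[Lemma 5.5]{KM} gives convenient $\subseteq$ functional. Your differences are small improvements rather than a new route. You read boundedness of $\mathrm{ev}$ directly off the definition of strong boundedness instead of going through barrelledness and hypocontinuity, and you use Mackey completeness of $E$ to say explicitly which diffeology on $E$ enters the functional diffeology, which the paper leaves implicit. One caveat: your opening claim that quasi-barrelledness makes $\beta(E',E)$ compatible with the duality is a non sequitur, but this is harmless because you immediately rely on reflexivity instead.
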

	
	\begin{proof}
		Since $E$ is a Montel space, it is barrelled. Therefore the evaluation map
		$$
		\mathrm{ev}: E'_b \times E \rightarrow \mathbb{R}, \qquad (\ell,x)\mapsto \ell(x),
		$$
		is hypocontinuous with respect to bounded sets. Hence $\mathrm{ev}$ is a bounded bilinear map, and by \cite[Lemma 5.5]{KM} it is smooth.
		It follows that the functional diffeology is coarser than the $c^\infty$-diffeology on $E'$.
		On the other hand, Montel spaces are reflexive. Thus, by Lemma \ref{lem:reflexive}, the functional diffeology is finer than the canonical diffeology, and the dual diffeology coincides with the canonical diffeology.
		
		Finally, the strong dual of a Montel space is again Montel. In particular, $E'_b$ is complete, hence Mackey complete, and therefore convenient. For a convenient vector space, the canonical diffeology coincides with the $c^\infty$-diffeology. Hence all three diffeologies on $E'$ coincide.
	\end{proof}
	
	\begin{corollary}\label{cor:nuclear}
		Let $E$ be a nuclear Fr\'echet space and equip $E'$ with the strong dual topology. Then the dual, functional and convenient diffeologies coincide.
	\end{corollary}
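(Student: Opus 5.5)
The corollary will follow from Proposition \ref{prop:Montel} once we know that a nuclear Fr\'echet space is Montel. So the plan has a single real step: show that $E$ is Montel, and then invoke Proposition \ref{prop:Montel} with $E'$ carrying the strong dual topology.

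First I will recall that every Fr\'echet space is barrelled, and in fact metrizable and complete. Second, I will show that every bounded subset of a nuclear space is precompact. The standard argument runs as follows. A nuclear space has a basis of continuous seminorms $p$ with the following property: for each such $p$ there is a larger $q$ for which the canonical map $\widehat{E}_q\to\widehat{E}_p$ between the local Banach spaces is nuclear, hence compact. Take a bounded $B\subseteq E$. Its image in $\widehat{E}_q$ is bounded, so its image in $\widehat{E}_p$ is relatively compact. Hence $B$ is precompact for each $p$, and therefore precompact in $E$. This is the content of \cite[Chapter III, \S 7.2]{SW}, and I will cite it rather than reprove it.

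Third, I will use completeness of $E$: a closed bounded set is then complete and precompact, hence compact. So $E$ is a barrelled space in which closed bounded sets are compact, i.e.\ a Montel space.

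With $E$ Montel, Proposition \ref{prop:Montel} applies directly. It gives that on $E'_b$ the dual, functional and convenient diffeologies coincide.

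I do not expect a genuine obstacle. The only point needing care is the implication ``nuclear $\Rightarrow$ bounded sets precompact'', which I will take from the literature. Everything else is definitional.
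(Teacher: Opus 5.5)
Your proposal is correct and follows essentially the same route as the paper, which simply notes that every nuclear Fr\'echet space is Montel and invokes Proposition \ref{prop:Montel}. Your sketch of why that holds is a correct filling-in of the standard fact the paper cites without proof: bounded sets are precompact because the linking maps are nuclear, hence compact, and completeness together with barrelledness of Fr\'echet spaces then give the Montel property.
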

	
	\begin{proof}
		As every nuclear Fr\'echet space is Montel, the claim follows
		immediately from Proposition \ref{prop:Montel}.
	\end{proof}
	
	\subsection{The inductive limit diffeology}
	We now construct a diffeology adapted to an inductive limit presentation of a
	locally convex space.
	
	\begin{definition}
		Let $E = \varinjlim E_j$ be the inductive limit of a family of LCTVSs $E_j$, each endowed
		with its canonical diffeology, and let $\imath_j: E_j \rightarrow E$ denote the structural maps.
		A parametrization $P: U \rightarrow E$ is a plot of the \textit{inductive limit diffeology} if and only if for every $r \in U$, there exists an open neighborhood $V\subseteq U $ of $ r$, an index $j$, and a plot $Q: V \rightarrow E_j$ in $E_j$ such that $P|_V = \imath_j \circ Q$.
	\end{definition}
	
	\begin{lemma}\label{lem:inductive-limit-diffeology}
		Let $E = \varinjlim E_j$ be the inductive limit of a family of LCTVSs, with structural maps $\imath_j: E_j \rightarrow E$. Assume that for each $j$, the map $\imath_j: E_j \rightarrow E$ is an induction with respect to the canonical diffeology.
		Suppose also that $E$ is regular, meaning that every bounded subset of $E$ is contained and bounded in some step $E_j$.
		Then the inductive limit diffeology is identical to the canonical diffeology on $E$.
	\end{lemma}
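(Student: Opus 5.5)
The inclusion of the inductive limit diffeology in the canonical diffeology is the easy half, and I will do it first. Each structural map $\imath_j:E_j\rightarrow E$ is continuous and linear, hence smooth for the canonical diffeologies by Proposition \ref{prop:continuous-linear-canonical}. So if $P|_V=\imath_j\circ Q$ with $Q$ a plot in $E_j$, then $P|_V$ is scalarwise smooth. Scalarwise smoothness is a local condition, since smoothness of $\ell\circ P$ on $U$ is local. Therefore a parametrization that is locally of the form $\imath_j\circ Q$ is a plot of the canonical diffeology on $E$.

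For the converse, I will fix a scalarwise smooth plot $P:U\rightarrow E$ and a point $r_0\in U$. The first step is to produce a neighbourhood whose image is bounded in $E$. By the proposition that every scalarwise smooth plot is locally Lipschitzian, there is an open ball $V$ around $r_0$, which I may shrink so that its closure lies in the Lipschitz neighbourhood and it has radius $\rho$, such that the set of difference quotients $D$ of $P$ on $V$ is bounded. For $s\in V$ with $s\neq r_0$ I write
$$P(s)=P(r_0)+\|s-r_0\|\,\frac{P(s)-P(r_0)}{\|s-r_0\|}.$$
This places $P(V)$ inside $P(r_0)+\rho\,\Gamma(D)$, where $\Gamma(D)$ is the absolutely convex hull of $D$. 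In a locally convex space this set is bounded, so $P(V)$ is bounded in $E$. This is the step where local convexity and the Lipschitz proposition enter, and it is the only genuinely analytic point of the proof.

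The second step uses regularity. Since $P(V)$ is bounded, there is an index $j$ with $P(V)\subseteq\imath_j(E_j)$, and in fact $P(V)$ is bounded there, although only the containment is needed. Because $\imath_j$ is an induction, it is injective, so $Q:=\imath_j^{-1}\circ P|_V:V\rightarrow E_j$ is a well-defined parametrization with $\imath_j\circ Q=P|_V$.

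The final step shows that $Q$ is a plot in $E_j$. The composite $\imath_j\circ Q=P|_V$ is a plot of the canonical diffeology on $E$, being the restriction of a plot. Since $\imath_j$ is an induction, the pullback diffeology along $\imath_j$ equals the canonical diffeology of $E_j$, so $Q$ is a plot of $E_j$. As $r_0$ was arbitrary, $P$ is a plot of the inductive limit diffeology, and the two diffeologies coincide.

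I expect the main obstacle to be the boundedness of $P(V)$; the remaining steps follow formally from the hypotheses once the local image is known to be bounded.
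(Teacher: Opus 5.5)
Your proof is correct and follows the same route as the paper: the easy inclusion comes from Proposition \ref{prop:continuous-linear-canonical}, and the converse uses a locally bounded image, then regularity, then the induction property of $\imath_j$. The only difference is that you get boundedness of $P(V)$ directly from the Lipschitz estimate $P(V)\subseteq P(r_0)+\rho\,\Gamma(D)$, whereas the paper uses continuity of canonical plots (Corollary \ref{cor:canonical-continuous}) and compactness of $P(\overline{V})$; both rest on the same locally Lipschitzian proposition.
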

	
	\begin{proof}
		Let $P:U\rightarrow E$ be a plot of the inductive limit diffeology, and let $r\in U$. By definition, there exists an open neighborhood $V\subseteq U $ of $ r$, an index $j $, and a plot $Q: V \rightarrow E_j$ such that $P|_V = \imath_j \circ Q$. Being linear and continuous, $\imath_j$ is smooth for the canonical diffeology. Therefore, $P|_V:V\rightarrow E$ is a plot of the canonical diffeology, and so $P$ is globally a plot of the canonical diffeology.
		
		Conversely, let $P:U\rightarrow E$ be a plot of the canonical diffeology, and fix $r\in U$. Choose an open neighborhood $V$ of $r$ such that $\overline{V}$ is compact in $U$. By Corollary \ref{cor:canonical-continuous}, the plot $P$ is continuous for the original locally convex topology of $E$, so $P(\overline{V})$ is compact and hence bounded in $E$. Being regular, one can find an index $j$ such that $P(\overline{V})\subseteq E_j$ and $P(\overline V)$ is bounded in $E_j$.
		Since $P(V)\subseteq P(\overline{V})\subseteq E_j$, we may regard $P|_V$ as a parametrization $V\rightarrow E_j$.
		Because $\imath_j:E_j\rightarrow E$ is an induction for the canonical diffeology
		and $\imath_j\circ P|_V=P|_V$ is a canonical plot of $E$, it follows that $P|_{V}$ is a plot of the canonical diffeology in $E_j$. Therefore, P is a plot of the inductive limit diffeology.
	\end{proof}
	
	\begin{theorem}
		Let $E = \varinjlim E_n$ be the strict inductive limit of a sequence of LCTVSs. If $E$ is regular, then the inductive limit diffeology and the canonical diffeology agree on $E$.
	\end{theorem}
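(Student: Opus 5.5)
The plan is to reduce the statement to Lemma \ref{lem:inductive-limit-diffeology}. Regularity of $E$ is assumed, so the only hypothesis left to check is that each structural map $\imath_n:E_n\rightarrow E$ is an induction for the canonical diffeologies.

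For a strict inductive limit of a sequence, the standard theory of LF-type spaces (e.g.\ \cite{NB} or \cite{SW}, Chapter II, \S 6) gives that each $\imath_n$ is a topological embedding. That is, $E_n$ carries exactly the subspace topology induced from $E$; this is where strictness is used. I will cite this fact. In the non-Hausdorff-closed case I will still only need the embedding property, not closedness of $E_n$ in $E$.

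With the embedding in hand, identify $E_n$ with the linear subspace $\imath_n(E_n)\subseteq E$ carrying the subspace topology. Proposition \ref{prop:subspace_canonical} then says that the canonical diffeology of $E_n$ coincides with the subspace diffeology inherited from the canonical diffeology of $E$. Concretely, every $\ell\in E_n'$ extends by Hahn--Banach to some $\widetilde\ell\in E'$, so a parametrization $P$ in $E_n$ with $\imath_n\circ P$ scalarwise smooth is itself scalarwise smooth. This is precisely the statement that $\imath_n$ is an induction; it is injective because it is an embedding.

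Applying Lemma \ref{lem:inductive-limit-diffeology} with these inductions and the regularity assumption then yields equality of the inductive limit diffeology and the canonical diffeology on $E$.

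The main obstacle is the embedding claim for strict inductive limits. It requires the classical extension lemma for convex $0$-neighbourhoods: given a convex $0$-neighbourhood $V$ in $E_n$, one builds inductively convex neighbourhoods $V_m\subseteq E_m$ with $V_{m+1}\cap E_m = V_m$, and sets $W=\bigcup_m V_m$, so that $W\cap E_n=V$. I will quote this rather than reprove it. Everything else is a direct combination of Proposition \ref{prop:subspace_canonical} and Lemma \ref{lem:inductive-limit-diffeology}.
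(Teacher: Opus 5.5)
Your proposal is correct and follows the paper's proof exactly. The paper cites \cite[Theorem 12.1.3(a)]{NB} to get that each $E_n\hookrightarrow E$ is a topological embedding, deduces from Proposition \ref{prop:subspace_canonical} that it is an induction for the canonical diffeologies, and concludes with Lemma \ref{lem:inductive-limit-diffeology}; your neighbourhood-extension sketch for the embedding claim is sound and, as you note, needs no closedness since regularity is assumed outright.
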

	
	\begin{proof}
		By virtue of \cite[Theorem 12.1.3(a)]{NB}, each inclusion $E_n\hookrightarrow E$ is a topological embedding, hence an induction for the canonical diffeology by Proposition \ref{prop:subspace_canonical}. So the result is simply obtained from Lemma \ref{lem:inductive-limit-diffeology}.
	\end{proof}
	
	\begin{example}
		Let $\Omega$ be a nonempty open subset of $\mathbb{R}^n$.
		Consider the space $C_c(\Omega)$ of continuous functions on
		$\Omega$ with compact support equipped with its strict inductive limit topology as in \cite[Example 12.1.6]{NB}.
		The convenient diffeology and the inductive limit diffeology on $C_c(\Omega)$ agree.
	\end{example}
	
	An LF-space (a strict inductive limit of a sequence of Fr\'echet spaces) $E$ is a convenient vector space. It is automatically regular. Thus, the following is immediate.
	
	\begin{corollary}\label{cor:LF-spaces}
		The convenient diffeology coincides with the inductive limit diffeology on an LF-space.
	\end{corollary}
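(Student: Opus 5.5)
The plan is to deduce Corollary~\ref{cor:LF-spaces} from the preceding theorem on strict inductive limits, together with Proposition~\ref{prop:convenient_diffeology}. Let $E=\varinjlim E_n$ be an LF-space, that is, a strict inductive limit of an increasing sequence of Fr\'echet spaces $E_n$. The argument has three steps.

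\emph{Step 1: convenience of each step.} Each $E_n$ is Fr\'echet, hence complete and in particular Mackey complete. Thus $E_n$ is convenient, and by Proposition~\ref{prop:convenient_diffeology} its canonical diffeology equals its $c^\infty$-diffeology. So the inductive limit diffeology, which is built from the canonical diffeologies of the steps, is the same whether one uses canonical or convenient plots in $E_n$.

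\emph{Step 2: convenience and regularity of $E$.} Strict inductive limits of a sequence of complete LCTVSs are complete \cite[Theorem 12.1.3]{NB}, so $E$ is complete, hence Mackey complete and convenient. By Proposition~\ref{prop:convenient_diffeology}, the canonical diffeology on $E$ is the convenient diffeology. Regularity is the Dieudonn\'e--Schwartz theorem: in a strict inductive limit of a sequence with each $E_n$ closed in $E_{n+1}$ (automatic for complete steps), every bounded set is contained and bounded in some $E_n$. I would cite this from \cite[Chapter 12]{NB}. This is the only point needing a genuine external input, and I expect it to be the main obstacle, mainly to be sure the closedness hypothesis is met. Closedness holds because each $E_n$ is complete and carries the subspace topology of $E_{n+1}$.

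\emph{Step 3: conclusion.} Apply the theorem immediately preceding the corollary. Each inclusion is a topological embedding, hence an induction by Proposition~\ref{prop:subspace_canonical}, and $E$ is regular. So the inductive limit diffeology coincides with the canonical diffeology on $E$, and by Step 2 the canonical diffeology is the convenient one.
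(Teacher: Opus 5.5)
Your proposal is correct and follows the paper's route: the paper likewise notes that an LF-space is convenient and automatically regular, then applies the preceding theorem on regular strict inductive limits together with Proposition~\ref{prop:convenient_diffeology}. Your Step~1 is not needed, since the inductive limit diffeology is already defined using the canonical diffeologies of the steps; your justification of regularity via Dieudonn\'e--Schwartz, with closedness coming from completeness of the steps, spells out what the paper simply calls automatic.
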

	
	\subsubsection{The bornological diffeology}
	We can apply the general framework of inductive limit diffeologies to the specific class of bornological spaces.
	
	Recall that a Hausdorff LCTVS $E$ is bornological if its topology is the inductive limit topology induced by the family of its bounded disks (see \cite[Theorem 13.2.10]{NB}). Specifically,
	$$ E = \varinjlim_{B \in \mathcal{B}} E_B, $$
	where $\mathcal{B}$ is the directed family of bounded absolutely convex disks in $E$ and $E_B$ denotes the linear span of $B$ endowed with its Minkowski functional.
	
	This decomposition allows us to endow a bornological space with a distinct natural structure: the \textit{bornological diffeology} is defined as the inductive limit diffeology associated with this system. Explicitly, a parametrization $P: U \rightarrow E$ is a plot if for every $r \in U$, there exists an open neighborhood $V \subseteq U$ of $r$ and a bounded absolutely convex disk $B \in \mathcal{B}$ such that $P(V) \subseteq E_B$ and the map $P|_V: V \rightarrow E_B$ is $C^\infty$.
	
	\begin{proposition}\label{prop:bornological-vs-canonical}
		Let $E$ be an LCTVS. The bornological diffeology on $E$ is finer than the
		$c^\infty$-diffeology.
	\end{proposition}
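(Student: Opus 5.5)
We have to show that every plot of the bornological diffeology is a $c^\infty$-plot. Smoothness in the $c^\infty$-diffeology is tested along smooth curves, and that notion is local in the parameter, so a localization argument suffices.

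Let $P:U\rightarrow E$ be a plot of the bornological diffeology, and fix $r\in U$. By definition there are an open neighbourhood $V\subseteq U$ of $r$ and a bounded absolutely convex disk $B\in\mathcal B$ such that $P(V)\subseteq E_B$ and $P|_V:V\rightarrow E_B$ is $C^\infty$ for the normed space $(E_B,p_B)$. The first step is to note that the inclusion $\imath_B:(E_B,p_B)\hookrightarrow E$ is continuous and linear. This holds because $B$ is bounded: for every continuous seminorm $q$ on $E$ there is $\lambda>0$ with $B\subseteq\lambda\{q\le 1\}$, hence $q\le\lambda p_B$ on $E_B$. This fact was already used in the proof of Lemma \ref{lem:pathological-curve-lemma}. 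By Proposition \ref{prop:linear_smooth}, $\imath_B$ is smooth for the $c^\infty$-diffeologies.

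Next, the $c^\infty$-diffeology on the normed space $E_B$ coincides with the Bastiani--Keller diffeology by Proposition \ref{prop:Bastiani-Keller-diffeology}. So $P|_V$ is a $c^\infty$-plot of $E_B$, and therefore $\imath_B\circ P|_V=P|_V$ is a $c^\infty$-plot of $E$. More concretely: for any smooth curve $c:\mathbb R\rightarrow V$, the curve $P\circ c$ is $C^\infty$ in $E_B$, and since derivatives and their continuity pass through a continuous linear map, it is $C^\infty$ in $E$.

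Finally we globalize. The $c^\infty$-diffeology is a diffeology, so by axiom D3 a parametrization that is locally a $c^\infty$-plot is a $c^\infty$-plot. Since $r$ was arbitrary, $P$ is a $c^\infty$-plot of $E$.

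The only delicate point is this locality step. The definition of the $c^\infty$-diffeology tests curves $c:\mathbb R\rightarrow U$ whose image need not lie in a single $V$. Relying on D3 avoids the issue. If one wants to see it directly, $C^\infty$-ness of $P\circ c$ is a local property in $t$, and near each $t_0$ one can reparametrize $c$ so that it stays inside a neighbourhood $V$ of $c(t_0)$. This is routine via \cite[Corollary 3.14]{KM}.
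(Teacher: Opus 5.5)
Your proof is correct and takes essentially the same route as the paper. You locally factor the plot through $E_B$, note that the inclusion $(E_B,p_B)\hookrightarrow E$ is continuous linear and hence $c^\infty$-smooth, and conclude; you also justify the continuity of the inclusion from the boundedness of $B$ and make the localization step explicit via axiom D3, both of which the paper leaves implicit.
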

	
	\begin{proof}
		Every plot in the bornological diffeology locally factors through a bounded disk
		$E_B$ by definition. Since the inclusion $E_B\hookrightarrow E$ is continuous linear, it is
		$c^\infty$-smooth. Hence every bornological plot is a plot for the
		$c^\infty$-diffeology.
	\end{proof}

	\section{Applications to distributions and microlocal analysis}\label{S6}
	In this section, we study applications of our framework to the theory of distributions.
	We recall only the microlocal facts needed for the construction of the multiplication domain; for the general theory of distributions and wavefront sets, see \cite{SchwartzTVS,Tre1967,Horm}.
	
	\subsection{Diffeological structure of functions and distribution spaces}
	We first explore the diffeological structures of smooth function spaces, test function spaces and their strong topological duals.
	
	\subsubsection{Spaces of smooth functions}
	Let $\Omega$ be a non-empty open subset of $\mathbb{R}^n$.
	Equip $C^\infty(\Omega)$ with the usual Fr\'echet topology generated by the seminorms
	\begin{equation*}
		\|f\|_{m,K}=\sup_{|\alpha|\le m}\sup_{x\in K}\|\partial^\alpha f(x)\|,
	\end{equation*}
	where $K\Subset \Omega$ and $m\in\mathbb{N}$.
	Then $C^{\infty}(\Omega)$ is a convenient vector space.
	\begin{proposition}\label{prop:smooth-functions-functional-convenient}
		The functional diffeology and the convenient diffeology on $C^{\infty}(\Omega)$ coincide.
	\end{proposition}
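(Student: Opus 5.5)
We have to prove that the functional diffeology on $C^\infty(\Omega)$ and the convenient diffeology coincide. Here $C^\infty(\Omega)$ is viewed as a subset of $C^\infty(\Omega,\mathbb R)$, and $\Omega$ carries its standard diffeology. A parametrization $P:U\to C^\infty(\Omega)$ is a functional plot exactly when $P^\vee:U\times\Omega\to\mathbb R$, $(r,x)\mapsto P(r)(x)$, is smooth. The plan is to prove the two inclusions separately. Both directions reduce to the exponential law (cartesian closedness) of convenient calculus, \cite[Theorem 3.12]{KM}: $C^\infty(U\times\Omega,\mathbb R)\cong C^\infty(U,C^\infty(\Omega,\mathbb R))$, where the right-hand side is taken in the convenient sense and $C^\infty(\Omega,\mathbb R)$ has its natural convenient structure.

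\emph{Convenient $\Rightarrow$ functional.} Let $P:U\to C^\infty(\Omega)$ be a plot of the convenient diffeology, equivalently a $c^\infty$-plot by Proposition \ref{prop:convenient_diffeology}. The evaluation map $\mathrm{ev}:C^\infty(\Omega)\times\Omega\to\mathbb R$ is smooth in the convenient sense (\cite[Corollary 3.13]{KM}). Therefore $P^\vee=\mathrm{ev}\circ(P\times\mathrm{id}_\Omega)$ is smooth on the finite-dimensional domain $U\times\Omega$, and so $P$ is a functional plot. A more hands-on alternative avoids this citation. Since $\Omega$ is finite-dimensional, one checks directly that each mixed derivative $\partial_r^\beta\partial_x^\alpha P^\vee$ exists and equals $\partial_x^\alpha(\partial_r^\beta P(r))(x)$. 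This works because the point evaluations $\delta_x\circ\partial^\alpha$ are continuous linear functionals, so they commute with derivatives of $P$. Joint continuity then follows from the continuity of $r\mapsto\partial^\beta_r P(r)$ into the Fréchet topology, which gives uniform control in $x$ on compacts.

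\emph{Functional $\Rightarrow$ convenient.} Suppose now that $P^\vee\in C^\infty(U\times\Omega)$. Because $C^\infty(\Omega)$ is Fréchet and hence convenient, the canonical and convenient diffeologies agree, so it suffices to show that $P$ is scalarwise smooth. Alternatively, I can use the exponential law directly: by \cite[Theorem 3.12]{KM}, $P^\vee$ corresponds to a smooth map $U\to C^\infty(\Omega,\mathbb R)$, and that map is exactly $P$. I also need the topology on $C^\infty(\Omega,\mathbb R)$ used in \cite{KM} to agree with the Fréchet topology of uniform convergence of derivatives on compacts. For finite-dimensional $\Omega$ this is \cite[Section 27.17]{KM}, and in any case the two spaces have the same bounded sets, which is all the $c^\infty$-structure depends on.

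The main obstacle is this identification of structures, which is needed to apply the exponential law honestly. As a fallback, I would argue directly. For fixed $\beta$, define $P_\beta(r):=\partial_r^\beta P^\vee(r,\cdot)\in C^\infty(\Omega)$. Uniform continuity of the derivatives of $P^\vee$ on compact subsets of $U\times\Omega$ shows that $r\mapsto P_\beta(r)$ is continuous into the Fréchet space. The same uniform continuity, applied through Taylor's formula with integral remainder, shows that the difference quotients of $P_\beta$ converge in every seminorm $\|\cdot\|_{m,K}$ to $\sum_i \xi_i P_{\beta+e_i}$ for each direction $\xi$. This yields that $P$ is $C^\infty$ in the Bastiani–Keller sense, which by Proposition \ref{prop:Bastiani-Keller-diffeology} establishes the second inclusion.
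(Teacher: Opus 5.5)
Your proposal is correct and follows essentially the paper's route. The paper simply invokes the exponential law \cite[Theorem 3.12]{KM} to conclude that $P$ is conveniently smooth if and only if $P^\vee\colon U\times\Omega\to\mathbb R$ is smooth, which is exactly the condition for a functional plot. Your additional steps make explicit a point the paper leaves implicit: using \cite[Corollary 3.13]{KM} for one inclusion, and checking that the Kriegl--Michor structure on $C^\infty(\Omega,\mathbb R)$ has the same bounded sets as the Fr\'echet topology.
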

	\begin{proof}
		By \cite[Theorem 3.12]{KM}, a parametrization $P:U\rightarrow C^{\infty}(\Omega)$ is conveniently smooth
		if and only if
		$$P^\vee:U\times\Omega\rightarrow\mathbb R,\qquad (r,x)\mapsto P(r)(x),$$
		is smooth. By the definition of the functional diffeology on $C^{\infty}(\Omega)$,
		this is equivalent to $P$ being a plot for the functional diffeology.
	\end{proof}

	For any compact subset $K\Subset\Omega$,
	consider the Fr\'echet
	spaces
	$$C^\infty_{K}(\Omega)=\{f\in\mathrm{C}^{\infty}(\Omega)\mid\mathrm{supp}(f)\subseteq K\}$$
	with seminorms given by
	\begin{equation*}
		\|f\|_{m,K}=\sup_{|\alpha|\le m}\sup_{x\in K}\|\partial^\alpha f(x)\|,
	\end{equation*}
	for all $m\in \mathbb{N}$.
	\begin{proposition}\label{prop:induction-smooth-functions}
		The inclusion $C^\infty_{K}(\Omega)\hookrightarrow C^{\infty}(\Omega)$ is an induction for the convenient diffeology.
	\end{proposition}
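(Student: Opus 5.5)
The plan is to reduce the statement to Proposition~\ref{prop:closed-induction}. That result says that for a $c^\infty$-closed linear subspace $A$ of an LCTVS, the inclusion is an induction for the $c^\infty$-diffeologies. Both $C^\infty_K(\Omega)$ and $C^\infty(\Omega)$ are Fr\'echet spaces, hence complete and therefore convenient. By Proposition~\ref{prop:convenient_diffeology}, on each of them the convenient diffeology is the $c^\infty$-diffeology. So it suffices to check two things: that $C^\infty_K(\Omega)$, with its own Fr\'echet topology, is a topological linear subspace of $C^\infty(\Omega)$, and that it is closed there.

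\emph{Step 1 (closedness).} The subspace $C^\infty_K(\Omega)$ is the intersection, over $x\in\Omega\setminus K$, of the kernels of the evaluation functionals $\delta_x:f\mapsto f(x)$. Each $\delta_x$ is continuous on $C^\infty(\Omega)$, since it is dominated by $\|\cdot\|_{0,\{x\}}$. Hence $C^\infty_K(\Omega)$ is closed in $C^\infty(\Omega)$. Every closed set is $c^\infty$-closed, because the $c^\infty$-topology is finer than the original topology (smooth curves are continuous).

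\emph{Step 2 (topologies agree).} I would show that the subspace topology from $C^\infty(\Omega)$ coincides with the Fr\'echet topology defined by the seminorms $\|\cdot\|_{m,K}$. In one direction, each seminorm $\|f\|_{m,K}$ is already among the seminorms of $C^\infty(\Omega)$. In the other, take any compact $L\Subset\Omega$ and $f\in C^\infty_K(\Omega)$. All derivatives of $f$ vanish outside $K$, so $\|f\|_{m,L}\le\|f\|_{m,K}$. The two families of seminorms are therefore equivalent on $C^\infty_K(\Omega)$. Consequently the $c^\infty$-diffeology of $C^\infty_K(\Omega)$ as a Fr\'echet space is the same as its $c^\infty$-diffeology as a subspace, and Proposition~\ref{prop:closed-induction} applies.

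As an alternative that avoids \cite[Lemma 3.8]{KM}, I can argue through the canonical diffeology. By Proposition~\ref{prop:subspace_canonical}, the inclusion of a topological linear subspace is an induction for the canonical diffeologies. Both spaces are convenient, so their canonical diffeologies are their convenient diffeologies, and the claim follows directly from Step~2. The closedness step is then not needed, though it does confirm that $C^\infty_K(\Omega)$ is convenient in its own right.

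The only point needing care is Step~2, namely the identification of the two topologies, and this is a routine support argument. So I expect no real obstacle. The main thing to keep straight is which diffeology is called ``convenient'' on each side.
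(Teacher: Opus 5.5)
Your main argument is the paper's: $C^\infty_K(\Omega)$ is a closed subspace of $C^\infty(\Omega)$ carrying the subspace topology, so Proposition~\ref{prop:closed-induction} applies. You also usefully make explicit three things the paper leaves implicit: the closedness, the agreement of the two topologies, and the fact that only the trivial implication ``closed $\Rightarrow$ $c^\infty$-closed'' is needed rather than equality of the $c^\infty$- and Fr\'echet topologies. Your alternative through Proposition~\ref{prop:subspace_canonical} is also correct and even lighter, since it needs neither closedness nor Kriegl--Michor's Lemma~3.8, only Hahn--Banach and the convenience of both spaces.
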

	\begin{proof}
		By \cite[Example 5, p. 137]{Hor}, $C^\infty_{K}(\Omega)$ is a closed subspace of the Fr\'echet space $C^{\infty}(\Omega)$.
		Since the $c^\infty$-topology coincides with the Fr\'echet topology on these spaces (see, e.g. \cite{Mic}), the result follows from Proposition \ref{prop:closed-induction}.
	\end{proof}
	\begin{remark}
		The above proposition recovers \cite[Theorem 4.8]{GW2015}, which is obtained analytically through a direct proof similar to that of \cite[Theorem 2.3]{KR}. See also Remark \ref{rem:convenient-inductive} below.
	\end{remark}
	
	\subsubsection{Spaces of test functions}
	The space of test functions
	$$\mathcal{D}(\Omega)=\{f\in\mathrm{C}^{\infty}(\Omega)\mid\mathrm{supp}(f)\Subset\Omega\}$$
	is defined as the strict inductive limit of a sequence of spaces $C^\infty_{K_n}(\Omega)$, where $(K_n)_{n\in\mathbb{N}}$ is a sequence of compact subsets exhausting $\Omega$, i.e., $\bigcup K_n = \Omega$ and $K_n \subseteq \mathrm{int}(K_{n+1})$.

	\begin{proposition}\label{prop:convenient-inductive}
		The convenient diffeology and the inductive limit diffeology on $\mathcal{D}(\Omega)$ agree.
		Furthermore, every plot is continuous for the inductive limit topology.
	\end{proposition}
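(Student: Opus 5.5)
The plan is to reduce the statement to the general results of Section~\ref{S5}, specifically Corollary~\ref{cor:LF-spaces} (or the strict-inductive-limit theorem preceding it) together with Corollary~\ref{cor:canonical-continuous}.

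The first step is to confirm the hypotheses for $\mathcal{D}(\Omega)=\varinjlim C^\infty_{K_n}(\Omega)$. Each step $C^\infty_{K_n}(\Omega)$ is a Fr\'echet space. It is closed in $C^\infty_{K_{n+1}}(\Omega)$ and carries the induced topology, so the limit is a strict inductive limit of a sequence of Fr\'echet spaces, that is, an LF-space. By \cite[Theorem 12.1.3(a)]{NB}, each inclusion $C^\infty_{K_n}(\Omega)\hookrightarrow\mathcal{D}(\Omega)$ is a topological embedding. By Proposition~\ref{prop:subspace_canonical}, each such inclusion is therefore an induction for the canonical diffeologies. Strict LF-spaces are regular: by the Dieudonn\'e--Schwartz theorem, every bounded subset of $\mathcal{D}(\Omega)$ lies in, and is bounded in, some $C^\infty_{K_n}(\Omega)$. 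Lemma~\ref{lem:inductive-limit-diffeology} then shows that the inductive limit diffeology equals the canonical diffeology on $\mathcal{D}(\Omega)$.

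The second step is to identify the canonical diffeology with the convenient one. An LF-space is complete, hence Mackey complete, hence convenient, so Proposition~\ref{prop:convenient_diffeology} applies. The steps $C^\infty_{K_n}(\Omega)$ are Fr\'echet and so also convenient. Consequently the plots of the inductive limit diffeology, which locally factor through canonical plots of the steps, are the same whether the steps carry the canonical or the $c^\infty$-diffeology. This gives the first assertion, and it amounts to a direct instance of Corollary~\ref{cor:LF-spaces}.

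For continuity, I will use Corollary~\ref{cor:canonical-continuous}. Every plot is scalarwise smooth, and scalarwise smooth plots are locally Lipschitzian, hence continuous for the original locally convex topology of $\mathcal{D}(\Omega)$, which is the strict inductive limit topology. As a more concrete description, a plot restricted to a relatively compact $V$ factors through some $C^\infty_{K_n}(\Omega)$ and is continuous there, and the inclusion of that step into $\mathcal{D}(\Omega)$ is continuous.

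I expect the main obstacle to be the regularity and induction hypotheses, since the argument rests entirely on quoting standard LF-space facts: that the steps embed topologically, and that bounded sets are localized in a single step. Once these are cited, the rest is formal.
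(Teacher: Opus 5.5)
Your proposal is correct and follows the paper's route: the paper's proof observes that $\mathcal{D}(\Omega)$ is an LF-space and invokes Corollaries~\ref{cor:LF-spaces} and~\ref{cor:canonical-continuous}, and you have unpacked the hypotheses those corollaries rest on (topological embedding of the steps, Dieudonn\'e--Schwartz regularity, and completeness giving convenience). Your remark that the Fr\'echet steps carry coinciding canonical and $c^\infty$-diffeologies makes explicit a point the paper leaves implicit.
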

	\begin{proof}
		Since $\mathcal{D}(\Omega)$ is an LF-space, it suffices to apply Corollaries \ref{cor:LF-spaces} and \ref{cor:canonical-continuous}.
	\end{proof}
	
	\begin{remark}\label{rem:convenient-inductive}
		By definition, a parametrization $P:U\rightarrow\mathcal{D}(\Omega)$ is a plot of the inductive limit diffeology if and only if
		for every $r\in U$, there are an open neighborhood $V\subseteq U$ and some $K\Subset\Omega$ such that $P|_{V}:V\rightarrow C^\infty_{K}(\Omega)$ is a plot in $C^\infty_{K}(\Omega)$ for the canonical (equivalently, convenient) diffeology. By Proposition \ref{prop:induction-smooth-functions}, this is equivalent to saying that $P^{\vee}:U\times\Omega\rightarrow\mathbb{R}$ is smooth and locally of uniformly bounded support in the sense of \cite[p. 7]{KR}.
		Therefore, Proposition \ref{prop:convenient-inductive} recovers \cite[Lemma 2.1 and Theorem 2.3]{KR} in a more general and much shorter way.
	\end{remark}
	
	\subsubsection{Spaces of distributions}
	The continuous dual of $\mathcal D(\Omega)$, endowed with the strong dual topology, is denoted by $\mathcal{D}'(\Omega)$ and called the space of distributions.
	Then $\mathcal{D}'(\Omega)$ is a convenient vector space.
	\begin{proposition}\label{prop:functional-distributions}
		All of the dual, functional, and convenient diffeologies on $\mathcal{D}'(\Omega)$ coincide.
	\end{proposition}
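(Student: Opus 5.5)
The natural route is to reduce the statement to Proposition \ref{prop:Montel}. The space $\mathcal{D}'(\Omega)$ is the strong dual of $\mathcal{D}(\Omega)$, so it suffices to check that $\mathcal{D}(\Omega)$ is a Montel space. This is classical: $\mathcal{D}(\Omega)$ is the strict inductive limit of the Fr\'echet spaces $C^\infty_{K_n}(\Omega)$. Each $C^\infty_{K_n}(\Omega)$ is a closed subspace of the nuclear Fr\'echet space $C^\infty(\Omega)$, and is therefore Montel; by Arzel\`a--Ascoli, bounded sets there are relatively compact. A strict LF-space whose steps are Montel is barrelled, and it is Montel because every bounded subset of $\mathcal{D}(\Omega)$ lies and is bounded in some step $C^\infty_{K_n}(\Omega)$ (regularity, as used for Corollary \ref{cor:LF-spaces}), where it is relatively compact. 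Compactness then transfers to $\mathcal{D}(\Omega)$ through the continuous inclusion. With the Montel property in hand, Proposition \ref{prop:Montel} gives directly that the dual, functional and convenient diffeologies on $\mathcal{D}'(\Omega)=\mathcal{D}(\Omega)'_b$ coincide.

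I would still unpack the three inclusions, so that it is visible which hypothesis each step uses. First, $\mathcal{D}(\Omega)$ is reflexive, so Lemma \ref{lem:reflexive}, or equally Remark \ref{rem:Mackey-Arens}, identifies the dual diffeology with the canonical diffeology on $\mathcal{D}'(\Omega)$ and shows that the functional diffeology is finer than it. Second, $\mathcal{D}'(\Omega)$ is complete, hence convenient, so Proposition \ref{prop:convenient_diffeology} identifies the canonical diffeology with the $c^\infty$-diffeology. Third, the functional diffeology is coarser than the convenient one. For this, the evaluation $\mathcal{D}'(\Omega)\times\mathcal{D}(\Omega)\to\mathbb{R}$ is hypocontinuous because $\mathcal{D}(\Omega)$ is barrelled; it is therefore a bounded bilinear map and smooth by \cite[Lemma 5.5]{KM}. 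Consequently, for any convenient plot $P$ and any plot $Q$ of $\mathcal{D}(\Omega)$, the map $(r,s)\mapsto\langle P(r),Q(s)\rangle$ is smooth. Chaining the three steps gives: convenient $\subseteq$ functional $\subseteq$ canonical $=$ dual $=$ convenient.

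One point needs care. The functional diffeology on $\mathcal{D}'(\Omega)$ is defined using the diffeology on $\mathcal{D}(\Omega)$, and this must be taken to be the convenient diffeology. By Proposition \ref{prop:convenient-inductive} that is the same as the inductive limit diffeology, so the statement holds with either reading. I expect no serious obstacle. The only step not purely formal is the verification that $\mathcal{D}(\Omega)$ is Montel, specifically the regularity of the strict inductive limit, and I would simply cite the standard references (\cite{Tre1967}, \cite{NB}) for that rather than reprove it.
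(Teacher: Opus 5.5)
Your proposal is correct and follows the paper's route. The paper's proof is the one-line reduction to Proposition \ref{prop:Montel} via the Montel property of $\mathcal{D}(\Omega)$ (cited from Horv\'ath), and your unpacking of the three inclusions retraces the proof of Proposition \ref{prop:Montel}; your remark that the functional diffeology must be taken with respect to the convenient diffeology on $\mathcal{D}(\Omega)$ makes explicit a point the paper leaves implicit.
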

	\begin{proof}
		Since $\mathcal{D}(\Omega)$ is Montel (\cite[Example 6, p. 241]{Hor}),
		the result follows from Proposition \ref{prop:Montel}. 
	\end{proof}
	
	\begin{corollary}\label{cor:functional-distributions}
		A parametrization $P:U\rightarrow\mathcal{D}'(\Omega)$ is a plot for the functional diffeology if and only if it is smooth in the sense of distributions, i.e., the map $U\rightarrow\mathbb{R}, r \mapsto \langle P(r), \phi \rangle$ is smooth for every $\phi\in \mathcal{D}(\Omega)$.
	\end{corollary}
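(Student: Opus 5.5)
The corollary is a direct unpacking of Proposition \ref{prop:functional-distributions}: the functional diffeology on $\mathcal{D}'(\Omega)$ coincides with the dual diffeology. By definition, the dual diffeology consists of those parametrizations $P:U\rightarrow\mathcal{D}'(\Omega)$ for which $\mathrm{ev}_\phi\circ P:r\mapsto\langle P(r),\phi\rangle$ is smooth for every $\phi\in\mathcal{D}(\Omega)$. This is exactly the condition of being smooth in the sense of distributions. So I would simply chain the two identifications: a functional plot is a dual plot by Proposition \ref{prop:functional-distributions}, and a dual plot is a distributionally smooth parametrization by definition.

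For transparency I would also trace the two inclusions that Proposition \ref{prop:Montel} packages. The direction ``functional $\Rightarrow$ distributionally smooth'' holds for any $E$, with no hypotheses. If $P^\vee:U\times\mathcal{D}(\Omega)\rightarrow\mathbb{R}$ is smooth, then composing with the smooth map $r\mapsto(r,\phi)$ gives $r\mapsto\langle P(r),\phi\rangle$ smooth.

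The converse is the substantive direction. Since $\mathcal{D}(\Omega)$ is reflexive, Remark \ref{rem:Mackey-Arens} shows that every continuous linear functional on $\mathcal{D}'(\Omega)$ is an evaluation $\mathrm{ev}_\phi$. A distributionally smooth $P$ is therefore scalarwise smooth, i.e.\ a plot of the canonical diffeology. Since $\mathcal{D}'(\Omega)$ is convenient, Proposition \ref{prop:convenient_diffeology} makes $P$ a $c^\infty$-plot. Joint smoothness of $P^\vee$ then follows from smoothness of the bounded bilinear evaluation $\mathcal{D}'(\Omega)_b\times\mathcal{D}(\Omega)\rightarrow\mathbb{R}$, because $P^\vee=\mathrm{ev}\circ(P\times\mathrm{id})$.

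The only point needing care is this last step. Smoothness of $P^\vee$ must be read in the diffeological sense on $U\times\mathcal{D}(\Omega)$, where $\mathcal{D}(\Omega)$ carries its convenient diffeology as an LF-space. For that, the boundedness, hence smoothness, of $\mathrm{ev}$ from barrelledness of $\mathcal{D}(\Omega)$ is exactly what Proposition \ref{prop:Montel} already supplies. Citing it therefore suffices, and the corollary follows immediately.
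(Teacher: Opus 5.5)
Your proposal is correct and matches the paper's proof: the distributional-smoothness condition is, by definition, the dual diffeology, and Proposition~\ref{prop:functional-distributions} (via Proposition~\ref{prop:Montel}) identifies the dual and functional diffeologies on $\mathcal{D}'(\Omega)$. Your extra tracing of the two inclusions only unpacks what Proposition~\ref{prop:Montel} already proves. Reflexivity gives dual $=$ canonical, convenience gives canonical $=$ $c^\infty$, and bounded bilinearity of $\mathrm{ev}$ gives smoothness of $P^\vee$, so it is consistent with the paper but not needed.
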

	\begin{proof}
		By definition, the condition that $r\mapsto \langle P(r),\phi\rangle$ is smooth for every $\phi\in\mathcal D(\Omega)$ is precisely the condition that $P$ be a plot for the dual diffeology on $\mathcal D'(\Omega)$. The conclusion follows from Proposition \ref{prop:functional-distributions}, which identifies the dual and functional diffeologies.
	\end{proof}
	
	\begin{remark}\label{rem:functional-distributions}
		Corollary \ref{cor:functional-distributions} fills a gap in the proof of smoothness of the classical distribution solution $K:[0,\infty)\rightarrow\mathcal{D}'(\Omega)$ of the heat equation as in \cite[p. 11]{KR}. While the authors claim this map is smooth with respect to the functional diffeology on $\mathcal{D}'(\Omega)$, in practice, their computation is in terms of the dual diffeology. This corollary validates their approach by showing the two are equivalent.
	\end{remark}
	
	As another application, one can easily see that the following maps are smooth with respect to the functional diffeology.
	
	\begin{example}
		By \cite[Proposition 4, p. 317]{Hor}, the composite map
		$$C^{\infty}(\Omega)\hookrightarrow C(\Omega)\rightarrow \mathcal{D}'(\Omega)$$
		which takes a smooth function $f$ to the distribution $T_f$ defined by
		$$T_f(\phi) = \int_{\Omega} f(x)\phi(x)~dx,\qquad\forall\phi\in \mathcal{D}(\Omega),$$
		is a continuous linear map, hence smooth for the canonical, equivalently convenient, diffeologies by Proposition \ref{prop:continuous-linear-canonical}.
		In light of Propositions \ref{prop:smooth-functions-functional-convenient} and \ref{prop:functional-distributions}, it is smooth with respect to the functional diffeologies on $C^{\infty}(\Omega)$ and $ \mathcal{D}'(\Omega)$, respectively (compare this result with \cite[Theorem 3.1]{KR}).
	\end{example}
	\begin{example}
		According to \cite[p. 324]{Hor}, the derivative map
		$\partial^p:\mathcal{D}'(\Omega)\rightarrow\mathcal{D}'(\Omega),$ defined by $\langle\partial^p T, \phi \rangle=(-1)^{|p|}\langle T,\partial^p \phi \rangle$ for $T\in\mathcal{D}'(\Omega)$ and $\phi\in\mathcal{D}(\Omega)$, is a continuous linear map, hence conveniently smooth.
		By Proposition \ref{prop:functional-distributions}, the functional and convenient diffeologies on $\mathcal{D}'(\Omega)$ coincide, so $\partial^p$ is also smooth for the functional diffeology.
	\end{example}	
	
	\begin{proposition}\label{prop:convenient-completion}
		The convenient diffeology and the completion diffeology on $\mathcal{D}'(\Omega)$ induced by $\mathcal{D}(\Omega)$ through $f\mapsto T_f$  coincide.
	\end{proposition}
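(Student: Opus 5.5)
The plan is to apply Theorem \ref{thm:completion-barrelled} with $E=\mathcal D(\Omega)$, $\mathbf E=\mathcal D'(\Omega)$ and $\imath(f)=T_f$. Three things must be checked: $\imath$ is a continuous linear injection, $\mathcal D'(\Omega)$ is a barrelled convenient vector space, and there is a regularizing sequence $S_n$.

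\emph{The map $\imath$.} Continuity of $\imath$ comes from the example above: it is the composite $\mathcal D(\Omega)\hookrightarrow C^\infty(\Omega)\to\mathcal D'(\Omega)$. Injectivity is the fundamental lemma of the calculus of variations.

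\emph{The target space.} $\mathcal D'(\Omega)$ is convenient, as recalled before Proposition \ref{prop:functional-distributions}. For barrelledness, note that $\mathcal D(\Omega)$ is Montel. Hence $\mathcal D'(\Omega)$ with the strong topology is Montel, so reflexive, so barrelled (strong duals of reflexive spaces are reflexive, and reflexive spaces are barrelled).

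\emph{The operators $S_n$.} I would use the standard cut-off and mollify construction. Fix an exhaustion $(K_n)$ of $\Omega$ as in the definition of $\mathcal D(\Omega)$. Choose $\chi_n\in\mathcal D(\Omega)$ with $\chi_n=1$ on $K_n$, and a mollifier $\rho_{\varepsilon}$ with $\varepsilon_n\to 0$ so small that $\operatorname{supp}\chi_n+\overline{B}(0,\varepsilon_n)\subseteq \operatorname{int}K_{n+1}$. Define
$$S_n(T)=\rho_{\varepsilon_n}*(\chi_n T),$$
where $\rho_{\varepsilon_n}$ is the usual symmetric mollifier. Then $S_n(T)$ is a smooth function supported in $K_{n+1}$, so $S_n(T)\in\mathcal D(\Omega)$.

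Continuity of $S_n:\mathcal D'(\Omega)\to\mathcal D(\Omega)$ is the step I expect to be the main obstacle. I would show that $S_n$ factors continuously through $\mathcal E'(\Omega)$ and $C^\infty_{K_{n+1}}(\Omega)$. Multiplication by $\chi_n$ is continuous from $\mathcal D'(\Omega)$ to $\mathcal E'$. Convolution with a fixed test function is continuous from $\mathcal E'$ (strong topology) to $C^\infty$, with supports controlled. Concretely, each derivative of $S_n(T)$ at $x$ equals $\langle T, \chi_n\,\partial^\alpha\rho_{\varepsilon_n}(x-\cdot)\rangle$. As $x$ ranges over a compact set, the test functions $\chi_n\,\partial^\alpha\rho_{\varepsilon_n}(x-\cdot)$ form a bounded subset of $\mathcal D(\Omega)$. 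So each seminorm of $S_n(T)$ is dominated by a strong seminorm of $T$.

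\emph{Pointwise convergence.} We need $\imath\circ S_n(T)\to T$ in the strong topology of $\mathcal D'(\Omega)$. Pairing with a test function $\phi$ gives
$$\langle T_{S_n T},\phi\rangle=\langle T,\chi_n(\check\rho_{\varepsilon_n}*\phi)\rangle.$$
The convergence $\chi_n(\check\rho_{\varepsilon_n}*\phi)\to\phi$ holds in $\mathcal D(\Omega)$ uniformly for $\phi$ in bounded sets $B$. Indeed, bounded sets lie in some $C^\infty_{K_m}$ with uniformly bounded derivatives. Once $n>m$, we have $\chi_n=1$ near $\operatorname{supp}\phi+\overline B(0,\varepsilon_n)$, and mollification converges uniformly in each $C^k$ norm on bounded sets. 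This convergence, uniform over bounded $B$, gives strong convergence. (Alternatively, since $\mathcal D'$ is Montel, weak-$*$ convergence of the bounded sequence $(\imath S_nT)$ upgrades to strong convergence.)

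With the hypotheses verified, Theorem \ref{thm:completion-barrelled} yields both that $\imath$ has dense image and that the completion and convenient diffeologies coincide.
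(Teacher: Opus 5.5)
Your proposal is correct and takes the same route as the paper: apply Theorem~\ref{thm:completion-barrelled} with $E=\mathcal D(\Omega)$, $\mathbf E=\mathcal D'(\Omega)$, and the cut-off-and-mollify operators $S_n(T)=\rho_{\varepsilon_n}*(\chi_n T)$. You also supply details the paper only asserts or cites: barrelledness of $\mathcal D'(\Omega)$ via the Montel property, an explicit seminorm estimate for the continuity of $S_n$, and a direct proof of strong convergence uniformly on bounded sets instead of the appeal to Tr\`eves' regularization theorem. One small correction there: $\chi_n=1$ on $\operatorname{supp}\phi+\overline B(0,\varepsilon_n)$ is only guaranteed for $n$ large, or for all $n>m$ if you choose $\varepsilon_n$ decreasing.
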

	\begin{proof}
		We apply Theorem \ref{thm:completion-barrelled} with $E=\mathcal D(\Omega)$ and $\mathbf E=\mathcal D'(\Omega).$
		Notice that the strong dual $\mathcal D'(\Omega)$ is a complete barrelled locally convex space in the standard distribution topology; in particular, it is convenient.
		We verify the approximation condition of Theorem \ref{thm:completion-barrelled} by constructing a sequence of continuous linear maps $S_j : \mathcal{D}'(\Omega) \rightarrow \mathcal{D}(\Omega)$ such that for every distribution $v \in \mathcal{D}'(\Omega)$, the image $S_j(v)$ converges to $v$ in $\mathcal{D}'(\Omega)$ as $j\rightarrow\infty$.
		The construction is performed through approximation of distributions by cutting and regularizing (see \cite[Theorem 28.2]{Tre1967}).
		
		Let $\{K_j\}_{j=1}^\infty$ be a sequence of compact subsets of $\Omega$ such that $K_j \subseteq \text{int}(K_{j+1})$ and $\bigcup_{j=1}^\infty K_j = \Omega$. For each $j$, choose a cutoff function $g_j \in \mathcal{D}(\Omega)$ such that $0 \leq g_j \leq 1$ and $g_j(x) = 1$ for all $x \in K_j$.
		Let $\rho \in \mathcal{D}(\mathbb{R}^n)$ be a standard mollifier: a non-negative function such that $\int_{\mathbb{R}^n} \rho(x) dx = 1$ and $\text{supp}(\rho) \subseteq B(0,1)$. For $\varepsilon > 0$, define $\rho_\varepsilon(x) = \varepsilon^{-n} \rho(x/\varepsilon)$.
		
		For each $j$, let $\delta_j = \text{dist}(\text{supp}(g_j), \partial\Omega) > 0$. Select a sequence $\varepsilon_j > 0$ such that for all $j$, we have $\varepsilon_j < \delta_j$ and $\varepsilon_j \rightarrow 0$ as $j \rightarrow \infty$. We define the linear operator $S_j: \mathcal{D}'(\Omega) \rightarrow \mathcal{D}(\Omega)$ by
		$S_j(v)=\rho_{\varepsilon_j} * (g_j v),$
		where the convolution is understood in the sense of distributions.
		Because $g_j v$ is a distribution with compact support in $\Omega$ and $\rho_{\varepsilon_j}$ is a smooth function, the convolution $\rho_{\varepsilon_j} * (g_j v)$ is a smooth function. The condition $\varepsilon_j < \delta_j$ ensures that the support of $S_j(v)$ remains compactly contained in $\Omega$, so $S_j(v) \in \mathcal{D}(\Omega)$ and $S_j$ is well defined. Moreover, $S_j$ is the composition of two continuous linear operators, that is, multiplication by $g_j$ and convolution with a test function. Hence $S_j$ is continuous and linear.
		
		For any test function $\phi \in \mathcal{D}(\Omega)$, we have
		$$\langle S_j(v), \phi \rangle = \langle \rho_{\varepsilon_j} * (g_j v), \phi \rangle = \langle g_j v, \check{\rho}_{\varepsilon_j} * \phi \rangle$$
		where $\check{\rho}_{\varepsilon_j}(x) = \rho_{\varepsilon_j}(-x)$.
		As $j \rightarrow \infty$, $\check{\rho}_{\varepsilon_j} * \phi$ converges to $\phi$ in $\mathcal D(\Omega)$, for $j$ large enough so that the support remains in a fixed compact subset of $\Omega$. Simultaneously, $g_j$ converges to $1$ uniformly on compact sets. Therefore, $\langle S_j(v), \phi \rangle \rightarrow \langle v, \phi \rangle$ for all $\phi$, proving weak convergence. The standard regularization theorem for distributions \cite[Theorem 28.2]{Tre1967} gives the corresponding convergence in the strong dual topology of $\mathcal D'(\Omega)$. Equivalently, in this setting the above regularizing sequence may be chosen so that
		$$
		S_j(v)\longrightarrow v
		\qquad\text{in the strong topology of }\mathcal D'(\Omega).
		$$
		By Theorem \ref{thm:completion-barrelled}, the convenient diffeology coincides with the completion diffeology.
	\end{proof}
	
	\begin{remark}
		If $\mathcal{D}'(\Omega)$ is endowed with the weak topology,
		then the dual diffeology and the convenient diffeology on $\mathcal{D}'(\Omega)$ are identical. However, the functional diffeology is in general finer than the convenient diffeology (see Remark \ref{rem:Mackey-Arens}).
	\end{remark}
	
	\subsubsection{Spaces of tempered distributions}
	Let $\mathcal S(\mathbb R^n)$ be the Schwartz space.
	Its continuous dual, endowed with the strong dual topology, is denoted by $\mathcal S'(\mathbb R^n)$ and is called the space of tempered distributions.
	As a nuclear Fr\'echet space, $\mathcal S(\mathbb R^n)$ satisfies the hypotheses of Corollary \ref{cor:nuclear}.
	Hence the dual, functional, and
	convenient diffeologies on $\mathcal S'(\mathbb R^n)$ coincide.
	Consequently, a parametrized family
	$U \rightarrow \mathcal{S}'(\mathbb{R}^n)$, $r \mapsto T_r$, of tempered distributions defined on a domain $U$ is smooth if and only if any (and hence all) of the following equivalent conditions hold:
	\begin{itemize}
		\item  For each Schwartz test function $\phi \in \mathcal S(\mathbb R^n)$, the map $r\mapsto \langle T_r, \phi \rangle$ is smooth.
		\item The map $(r, \phi) \mapsto \langle T_r, \phi \rangle$ is a smooth function on $U \times \mathcal S(\mathbb R^n),$ where $\mathcal S(\mathbb R^n)$ carries its convenient diffeology.
	\end{itemize}
	
	\subsection{The diffeological space of multipliable distributions}
	In this subsection, we consider a natural smooth structure on the set of multipliable distributions and prove that the multiplication map is smooth.
	
	Let $\Omega \subseteq \mathbb{R}^n$ be an open subset. We denote by $T^*\Omega$ the cotangent bundle over $\Omega$, and by $\dot{T}^*\Omega$ the cotangent bundle with its zero section removed. A subset $\Gamma \subseteq \dot{T}^*\Omega$ is a \textit{cone} if $(x, \lambda \xi) \in \Gamma$ whenever $(x, \xi) \in \Gamma$ and $\lambda > 0$. Such a cone is \textit{closed} if it is a closed subset of $\dot{T}^*\Omega$.
	
	We recall the definition of the wavefront set. Let $u\in\mathcal D'(\Omega)$. A point $(x_0,\xi_0)\in\dot T^*\Omega$ does not belong to $WF(u)$ if there exist $\chi\in\mathcal D(\Omega)$ with $\chi(x_0)\neq0$, an open conic neighborhood $V$ of $\xi_0$ in $\mathbb R^n\setminus\{0\}$, and, for every $N\in\mathbb N$, a constant $C_N>0$ such that
	$$
	|\widehat{\chi u}(\xi)|\leq C_N(1+|\xi|)^{-N}
	\qquad\text{for all }\xi\in V.
	$$
	The wavefront set $WF(u)$ is the complement in $\dot T^*\Omega$ of the set of such microlocally regular directions (see \cite[Section 8.1]{Horm}). We shall use only the following standard properties: $WF(u)$ is a closed conic subset of $\dot T^*\Omega$; $WF(u)=\varnothing$ if and only if $u\in C^\infty(\Omega)$; and H\"{o}rmander's pullback theorem gives the multiplication criterion below.
	
	For a fixed closed cone $\Gamma \subseteq \dot{T}^*\Omega$, we denote by
	$$ \mathcal{D}'_\Gamma(\Omega) = \{ u \in \mathcal{D}'(\Omega) \mid WF(u) \subseteq \Gamma \} $$
	the space of distributions whose wavefront sets are contained in $\Gamma$.
	
	One can consider two distinct topologies on $\mathcal{D}'_\Gamma(\Omega)$ (see, e.g., \cite{DB}):
	\begin{itemize}
		\item The \textit{H\"{o}rmander topology}, which is generated by the wavefront seminorms $||u||_{N,V,\chi}$ for all integers $N$, all closed cones $V$
		and all $\chi\in \mathcal{D}(\Omega)$ such that
		$(\supp\chi\times V)\cap \Gamma=\varnothing$ alongside the weak pointwise evaluation seminorms $p_f(u) = |\langle u, f \rangle|$ for all $f \in \mathcal{D}(\Omega)$.
		\item The \textit{normal topology}, which is generated by the same wavefront seminorms alongside the strong uniform evaluation seminorms $p_B(u) = \sup_{f \in B} |\langle u, f \rangle|$ for bounded sets $B \subseteq \mathcal{D}(\Omega)$.
	\end{itemize}
	
	According to \cite[Proposition 1]{DB}, the H\"ormander and normal topologies on
	$\mathcal{D}'_\Gamma(\Omega)$ have the same bounded sets. Moreover,
	$\mathcal{D}'_\Gamma(\Omega)$ is complete for the normal topology, hence
	Mackey complete. As Mackey completeness is expressed in terms of bounded disks and Mackey-Cauchy nets, it depends only on the bounded sets of the locally convex structure. Because Mackey completeness is a bornological invariant, it
	follows that $\mathcal{D}'_\Gamma(\Omega)$ endowed with the H\"ormander topology
	is also Mackey complete. Therefore, as a locally convex Mackey complete space,
	$\mathcal{D}'_\Gamma(\Omega)$ is a convenient vector space. In what follows,
	we equip $\mathcal{D}'_\Gamma(\Omega)$ with the convenient diffeology
	associated with the H\"ormander topology.
	
	To formally define the domain of distribution multiplication, we consider the partially ordered set $\mathcal{K}$ whose elements are pairs of closed cones $(\Gamma_1, \Gamma_2)$ in $\dot{T}^*\Omega$ satisfying H\"{o}rmander's condition:
	$$ \Gamma_1 \cap (-\Gamma_2) = \varnothing, \quad \text{where } -\Gamma_2 = \{ (x, -\xi) \mid (x, \xi) \in \Gamma_2 \}. $$
	The order on $\mathcal{K}$ is given by componentwise inclusion:
	$$(\Gamma_1,\Gamma_2)\leq(\Gamma_1',\Gamma_2')
	\quad\Longleftrightarrow\quad
	\Gamma_1\subseteq \Gamma_1'
	\text{ and }
	\Gamma_2\subseteq \Gamma_2' .$$
	
	The multiplication of distributions $u \in \mathcal{D}'_{\Gamma_1}(\Omega)$ and $v \in \mathcal{D}'_{\Gamma_2}(\Omega)$ is defined as the pullback of their tensor product by the diagonal map $\Delta:\Omega\rightarrow\Omega\times\Omega$, i.e., $uv = \Delta^*(u \otimes v)$ \cite[Theorem 8.2.10]{Horm}.
	
	\begin{lemma}\label{lem:smoothness-multiplication}
		For any pair of cones $(\Gamma_1, \Gamma_2) \in \mathcal{K}$, the bilinear multiplication map $\mu_{(\Gamma_1, \Gamma_2)}: \mathcal{D}'_{\Gamma_1}(\Omega) \times \mathcal{D}'_{\Gamma_2}(\Omega) \rightarrow \mathcal{D}'(\Omega)$ is smooth for the convenient diffeology.
	\end{lemma}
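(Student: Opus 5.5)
The plan is to exploit that both source and target are convenient vector spaces and that, for the convenient diffeology, a bilinear map is smooth as soon as it is bounded (\cite[Lemma 5.5]{KM}). Smoothness of $\mu_{(\Gamma_1,\Gamma_2)}$ then reduces to showing that it maps products of bounded sets to bounded sets. By Proposition \ref{prop:convenient_diffeology}, the convenient diffeology on $\mathcal{D}'_{\Gamma_1}(\Omega)\times\mathcal{D}'_{\Gamma_2}(\Omega)$ is the $c^\infty$-diffeology of the product. This product is again Mackey complete, and its canonical diffeology is the product diffeology by the earlier proposition on products. So a bounded bilinear map is $c^\infty$-smooth, hence a plot-preserving map into $\mathcal{D}'(\Omega)$ with its convenient diffeology.

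The key analytic input is the continuity of the multiplication established by Brouder--Dang--H\'elein \cite{DHB} (see also \cite{DB}): for $(\Gamma_1,\Gamma_2)\in\mathcal K$, the product $uv=\Delta^*(u\otimes v)$ is hypocontinuous, or at least sequentially continuous and bounded, as a map $\mathcal{D}'_{\Gamma_1}(\Omega)\times\mathcal{D}'_{\Gamma_2}(\Omega)\to\mathcal{D}'_{\Gamma}(\Omega)$ for a suitable closed cone $\Gamma$ (namely $\Gamma_1\cup\Gamma_2\cup(\Gamma_1+\Gamma_2)$), with the normal topologies. Composing with the continuous inclusion $\mathcal{D}'_\Gamma(\Omega)\hookrightarrow\mathcal{D}'(\Omega)$, which is continuous because the strong seminorms $p_B$ occur among the normal seminorms, gives a bounded bilinear map into $\mathcal{D}'(\Omega)$.

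Since \cite[Proposition 1]{DB} says the H\"ormander and normal topologies have the same bounded sets, boundedness is unaffected by our choice of the H\"ormander topology. The convenient diffeology depends only on the bornology, because $c^\infty$-curves are determined by Mackey convergence. Hence the diffeology we fixed agrees with the one coming from the normal topology.

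The main obstacle is extracting a genuine boundedness statement from \cite{DHB}. Their continuity results are phrased for the normal topology and in terms of the pullback and tensor-product operations, so the argument has to be assembled step by step. First, $(u,v)\mapsto u\otimes v$ is bounded $\mathcal{D}'_{\Gamma_1}\times\mathcal{D}'_{\Gamma_2}\to\mathcal{D}'_{\Gamma_1\times\Gamma_2\cup\ldots}(\Omega\times\Omega)$. Second, $\Delta^*$ is continuous on distributions whose wavefront set avoids the conormal of the diagonal; this is exactly Hörmander's condition $\Gamma_1\cap(-\Gamma_2)=\varnothing$. The composite of a bounded bilinear map with a continuous linear map is bounded.

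If only sequential continuity is available, I would fall back on a direct estimate. For bounded sets $B_i\subseteq\mathcal{D}'_{\Gamma_i}(\Omega)$ and a test function $\phi$, I would bound $\sup_{u\in B_1,v\in B_2}|\langle uv,\phi\rangle|$ by splitting $\widehat{\chi u}*\widehat{\chi v}$ into conic regions, using the uniform wavefront seminorm bounds of $B_1$ and $B_2$ (as in the proof of \cite[Theorem 8.2.10]{Horm}). The strong dual topology on $\mathcal{D}'(\Omega)$ then gives boundedness.
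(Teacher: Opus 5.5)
Your proposal is correct and takes essentially the paper's route. The paper cites \cite[Proposition 2.18]{DHB} for boundedness of the bilinear map $\mu_{(\Gamma_1,\Gamma_2)}$ and then applies \cite[Lemma 5.5]{KM}, which says bounded bilinear maps between convenient spaces are smooth. Your extra steps --- factoring $uv=\Delta^*(u\otimes v)$, and using \cite[Proposition 1]{DB} to pass boundedness between the normal and H\"ormander topologies --- make explicit what the paper takes directly from those references.
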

	\begin{proof}
		By \cite[Proposition 2.18]{DHB}, the bilinear multiplication map
		$$
		\mu_{(\Gamma_1,\Gamma_2)}:
		\mathcal{D}'_{\Gamma_1}(\Omega)\times \mathcal{D}'_{\Gamma_2}(\Omega)
		\rightarrow \mathcal{D}'(\Omega)
		$$
		is bounded. Since bounded bilinear maps between convenient vector spaces are smooth by \cite[Lemma 5.5]{KM}, it follows that $\mu_{(\Gamma_1,\Gamma_2)}$ is smooth.
	\end{proof}
	
	For $(\Gamma_1,\Gamma_2)\le (\Gamma_1',\Gamma_2')$ in $\mathcal K$, there are natural inclusion maps
	$$
	\mathcal D'_{\Gamma_1}(\Omega)\times \mathcal D'_{\Gamma_2}(\Omega)
	\hookrightarrow
	\mathcal D'_{\Gamma_1'}(\Omega)\times \mathcal D'_{\Gamma_2'}(\Omega),
	$$
	which define a diagram in the category $\mathsf{Diff}$.
	We now encode the domain of multiplication as a colimit in $\mathsf{Diff}$, indexed by the poset $\mathcal{K}$.
	\begin{definition}
		The \textit{diffeological space of multipliable distributions}, denoted by $M_{WF}$, is defined as the colimit
		$$
		M_{WF} = \varinjlim_{(\Gamma_1, \Gamma_2) \in \mathcal{K}}
		\left( \mathcal{D}'_{\Gamma_1}(\Omega) \times \mathcal{D}'_{\Gamma_2}(\Omega) \right)
		$$
		in the category $\mathsf{Diff}$.
		Concretely, $M_{\mathrm{WF}}$ is the subset
		$$
		M_{\mathrm{WF}} =
		\bigcup_{(\Gamma_1, \Gamma_2) \in \mathcal{K}}
		\left( \mathcal{D}'_{\Gamma_1}(\Omega) \times \mathcal{D}'_{\Gamma_2}(\Omega) \right)
		\subseteq
		\mathcal D'(\Omega)\times\mathcal D'(\Omega),
		$$
		endowed with the final diffeology with respect to the canonical inclusion maps
		$$
		\imath_{\Gamma_1,\Gamma_2}:
		\mathcal D'_{\Gamma_1}(\Omega)\times
		\mathcal D'_{\Gamma_2}(\Omega)
		\rightarrow M_{\mathrm{WF}}.
		$$
		That is, a parametrization $P:U\rightarrow M_{\mathrm{WF}}$ is a plot if and only if, for every $r\in U$, there exists an open neighborhood $V\subseteq U$ of $r$, a pair $(\Gamma_1,\Gamma_2)\in\mathcal{K}$, and a smooth map
		$Q:V\rightarrow
		\mathcal D'_{\Gamma_1}(\Omega)\times
		\mathcal D'_{\Gamma_2}(\Omega)$
		such that
		$P|_V=\imath_{\Gamma_1,\Gamma_2}\circ Q.$
	\end{definition}

	We now show that the multiplication of distributions is a smooth map in this setting.
	
	\begin{theorem}\label{the:global-smoothness}
		The multiplication map $\mu: M_{\mathrm{WF}} \rightarrow \mathcal{D}'(\Omega)$ is smooth.
	\end{theorem}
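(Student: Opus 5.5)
The natural approach is the universal property of the colimit in $\mathsf{Diff}$, together with Lemma \ref{lem:smoothness-multiplication}. First I would check that $\mu$ is well defined as a map of sets on $M_{\mathrm{WF}}$. A point $(u,v)\in M_{\mathrm{WF}}$ may lie in $\mathcal D'_{\Gamma_1}(\Omega)\times\mathcal D'_{\Gamma_2}(\Omega)$ for several pairs $(\Gamma_1,\Gamma_2)\in\mathcal K$. Its product is nonetheless independent of the pair. Indeed, $(WF(u),WF(v))$ is itself an element of $\mathcal K$: both sets are closed cones, and $WF(u)\cap(-WF(v))\subseteq\Gamma_1\cap(-\Gamma_2)=\varnothing$. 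This pair lies below every admissible $(\Gamma_1,\Gamma_2)$ containing $(u,v)$. So it suffices to know that the maps $\mu_{(\Gamma_1,\Gamma_2)}$ are compatible along the inclusions of the diagram: for $(\Gamma_1,\Gamma_2)\le(\Gamma_1',\Gamma_2')$, $\mu_{(\Gamma_1',\Gamma_2')}$ restricted to the smaller product equals $\mu_{(\Gamma_1,\Gamma_2)}$. This holds because the pullback $\Delta^*(u\otimes v)$ of \cite[Theorem 8.2.10]{Horm} is defined intrinsically. It is the unique extension that is sequentially continuous in the Hörmander topology on $\mathcal D'_{\Gamma}$ for any cone $\Gamma$ disjoint from the conormal of the diagonal and containing $WF(u\otimes v)$. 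Equivalently, it is the limit of $u_kv_k$ for smooth approximations $u_k\to u$, $v_k\to v$ in the relevant Hörmander topologies, and such approximations in the smaller spaces also converge in the larger ones.

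Having this compatible family, I would define $\mu$ by $\mu\circ\imath_{\Gamma_1,\Gamma_2}=\mu_{(\Gamma_1,\Gamma_2)}$. The compatibility step shows the maps form a cocone over the diagram, so the universal property of the colimit yields a unique smooth $\mu$. I would also verify smoothness directly from the explicit description of the final diffeology. Let $P:U\to M_{\mathrm{WF}}$ be a plot and $r\in U$. Choose $V\ni r$, a pair $(\Gamma_1,\Gamma_2)\in\mathcal K$, and a plot $Q:V\to\mathcal D'_{\Gamma_1}(\Omega)\times\mathcal D'_{\Gamma_2}(\Omega)$ with $P|_V=\imath_{\Gamma_1,\Gamma_2}\circ Q$. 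Then $(\mu\circ P)|_V=\mu_{(\Gamma_1,\Gamma_2)}\circ Q$, which is a plot of $\mathcal D'(\Omega)$ by Lemma \ref{lem:smoothness-multiplication}. Since plots are determined locally (axiom D3), $\mu\circ P$ is a plot.

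The only genuine obstacle is the compatibility, i.e. well-definedness, of the products across different cone pairs. Everything else is formal. I would handle it by citing the uniqueness and continuity statement in Hörmander's pullback theorem, or \cite[Proposition 2.18]{DHB}, and noting that the inclusions $\mathcal D'_{\Gamma}(\Omega)\hookrightarrow\mathcal D'_{\Gamma'}(\Omega)$ for $\Gamma\subseteq\Gamma'$ are continuous. The continuity holds because the wavefront seminorms for $\Gamma'$ form a subset of those for $\Gamma$, while the evaluation seminorms are the same. Consequently the product obtained by density of $C^\infty(\Omega)$ is the same in both spaces.
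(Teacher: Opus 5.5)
Your proof is correct and follows the paper's argument: by the final diffeology, smoothness of $\mu$ reduces to smoothness of each composite $\mu\circ\imath_{\Gamma_1,\Gamma_2}=\mu_{(\Gamma_1,\Gamma_2)}$, which is exactly Lemma \ref{lem:smoothness-multiplication}. Your compatibility check, via the minimal pair $(WF(u),WF(v))\in\mathcal K$ and the uniqueness in H\"ormander's pullback theorem, makes explicit a well-definedness point that the paper leaves implicit, since it treats $\mu$ as the intrinsic product on $M_{\mathrm{WF}}\subseteq\mathcal D'(\Omega)\times\mathcal D'(\Omega)$.
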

	\begin{proof}
		By the definition of the final diffeology on $M_{\mathrm{WF}}$, it suffices to
		show that
		$\mu\circ\imath_{\Gamma_1,\Gamma_2}$
		is smooth for every $(\Gamma_1,\Gamma_2)\in\mathcal{K}$. But this composite
		is precisely the multiplication map
		$$
		\mu_{(\Gamma_1,\Gamma_2)}:
		\mathcal D'_{\Gamma_1}(\Omega)\times
		\mathcal D'_{\Gamma_2}(\Omega)
		\rightarrow \mathcal D'(\Omega),
		$$
		which is smooth by Lemma \ref{lem:smoothness-multiplication}. Therefore
		$\mu$ is smooth.
	\end{proof}
	
	\vskip 12pt
	
	\paragraph{\bf Data availability statement} No data are available for this work.
	
	\vskip 12pt
	
	\paragraph{\bf Conflict of interest statement} The authors declare no conflict of interest.
	
	\vskip 12pt
	
	\paragraph{\bf Funding} No funding supported this work.
	
	\vskip 12pt
	
	\paragraph{\bf Acknowledgements} A.A. and B.D. would like to thank Yazd University for its support. This work was carried out while the first author was a postdoctoral researcher at Yazd University. 
    J.-P.M. thanks the France 2030 framework programme Centre Henri Lebesgue ANR-11-LABX-0020-01
	for creating an attractive mathematical environment.
	
	\vskip 12pt
	
	\paragraph{\bf Author's Note on AI Assistance}
	Portions of the text were developed with the assistance of a generative language model (OpenAI ChatGPT, based on the GPT-4 architecture). The AI was used to assist with drafting, editing, and standardizing the bibliography format. All mathematical content, structure, and theoretical constructions were provided, verified, and curated by the authors. The authors assume full responsibility for the correctness, originality, and scholarly integrity of the final manuscript.

\end{document}